\numberwithin{equation}{section}
\newtheorem{theorem}{Theorem}[section]
\newtheorem{lemma}[theorem]{Lemma}
\newtheorem{example}[theorem]{Example}
\newtheorem{proposition}[theorem]{Proposition}
\newtheorem{corollary}[theorem]{Corollary}
\newtheorem{remark}[theorem]{Remark}
\theoremstyle{definition}
\newcommand{\R}{{\mathbb R}}
\newcommand{\eps}{\varepsilon}
\newcommand{\rO}{{\rm O}}
\def\XXint#1#2#3{{\setbox0=\hbox{$#1{#2#3}{\int}$ }
\vcenter{\hbox{$#2#3$ }}\kern-.6\wd0}}
\title[Existence and asymptotic behavior of nontrivial solutions to S-H]{Existence and asymptotic behavior of nontrivial solutions to the Swift-Hohenberg equation}
\author[G.\ Marino]{Greta Marino}
\address[G.\ Marino]{Dipartimento di Matematica e Informatica
\newline\indent
Universit\`a degli Studi di Catania
\newline\indent
Viale A. Doria 6 I-95125 Catania, Italy}
\email{greta.marino@dmi.unict.it}
\author[S.\ Mosconi]{Sunra Mosconi}
\address[S.\ Mosconi]{Dipartimento di Matematica e Informatica
\newline\indent
Universit\`a degli Studi di Catania
\newline\indent
Viale A. Doria 6 I-95125 Catania, Italy}
\email{mosconi@dmi.unict.it}
\subjclass[2010]{XXX}
\keywords{XXX}
\thanks{S. M. was partially supported by Gruppo Nazionale per l'Analisi Matematica, la Probabilit\`a e le loro Applicazioni (INdAM)}
\begin{document}

\begin{abstract}
In this paper, we discuss several results regarding existence, non-existence and asymptotic properties of solutions to $u''''+qu''+f(u)=0$, under various hypotheses on the parameter $q$ and on the potential $F(t)=\int_0^tf(s)\, ds$, generally assumed to be bounded from below. We prove a non-existence result in the case $q\le 0$ and an existence result of periodic solution for: 1) almost every suitably small (depending on $F$), positive values of $q$; 2) all suitably large (depending on $F$) values of $q$. Finally, we describe some conditions on $F$ which ensure that some (or all) solutions $u_q$ to the equation satisfy $\|u_q\|_\infty\to 0$, as $q\downarrow 0$.     
\end{abstract}

\maketitle

\begin{center}
	\begin{minipage}{12cm}
		\small
		\tableofcontents
	\end{minipage}
\end{center}

\medskip

\section{Introduction and main results}

We are interested in solutions to the ordinary differential equation
\begin{equation}
\label{SH}
u''''+qu''+F'(u)=0,
\end{equation}
where $F$ is a smooth (say, $C^2$) potential which we can freely assume to satisfy $F(0)=0$. For $q>0$, \eqref{SH} is the Swift-Hohenberg (briefly, S-H) equation while, for $q\leq 0$, it is known as the Extended Fisher-Kolmogorov (briefly, EFK) equation. Both equations have a large number of applications, which differ substantially according to various ranges of the parameter $q$. We refer to the monograph \cite{pt} for its various physical derivations and many qualitative results. Historically, \eqref{SH} was first considered for the double-well potential $F(t)=(1-t^2)^2$, due to its relevance in phase transition problems. Heteroclinic or homoclinic solutions have been obtained in \cite{pt, bs, m, smets} for S-H and in \cite{pt, kv, kkv} for EFK.  Roughly speaking, the EFK case has proven to be much more manageable than the S-H case, with some basic questions still left open for the latter.

After the seminal works of McKenna, Lazer and Walter (see \cite{mw, mw2, lm}), the study of \eqref{SH} for convex, coercive potentials became a major tool to understand the modelling of suspension bridges. Since then, much more refined, higher dimensional models have been developed, and a rather exhaustive exposition on the subject can be found in the monograph \cite{g}. In this paper we will focus on quasi-convex potentials $F$, i.e., those satisfying
\begin{equation}
\label{QC}
F'(t)t\geq 0,\qquad \forall t\in \R.
\end{equation}
Under this assumption, our main interest will be answering to the following questions, mainly motivated by some open problems described in \cite{lm2, ms}:
\begin{enumerate}
\item
Under what condition does \eqref{SH} possesses nontrivial (i.e., non-constant) global/bounded solutions?
\item
What is the behavior, as $q\to 0$ (i.e. transitioning from S-H to EFK), of such solutions?
\end{enumerate}

We now briefly describe what is known so far regarding the previous questions, along with the new results contained in the paper. Notice that some of our statements will involve quantities like
\[
\limsup_{t\to \pm\infty} \frac{F(t)}{t^2}, \qquad \liminf_{t\to \pm\infty} \frac{F(t)}{t^2}.
\]
Similar, but weaker, statements will hold under similar conditions on $f(t)=F'(t)$, namely involving the corresponding quantites
\[
\limsup_{t\to \pm\infty} \frac{f(t)}{t}, \qquad \liminf_{t\to \pm\infty} \frac{f(t)}{t}
\]
(which are more frequent in the literature), simply due to the inequalities
\[
\limsup_{t\to \pm\infty} \frac{F(t)}{t^2}\le \limsup_{t\to \pm\infty} \frac{f(t)}{t}, \qquad \liminf_{t\to \pm\infty} \frac{F(t)}{t^2}\ge \liminf_{t\to \pm\infty} \frac{f(t)}{t},
\]
which may be strict in some cases.

\subsection{The EFK case}

Let us first remark that, for coercive potentials, condition \eqref{QC} has proven to be almost equivalent to the absence of (nontrivial) bounded solutions to \eqref{SH}. Here is a more precise statement.

\begin{theorem}
[Theorems 3.1 and 5.1 in \cite{ms}]
Let $q\le 0$. If \eqref{QC} holds, the only bounded solutions to \eqref{SH} are constants. Moreover, in the class of coercive potentials with $\{F'(t)=0\}$ discrete, \eqref{QC} is actually {\em equivalent} to the existence of bounded nontrivial solutions.
\end{theorem}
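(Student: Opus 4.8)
The plan is to treat the two assertions separately, developing the non-existence statement in detail and then reducing the equivalence to a converse construction.

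\textbf{Non-existence under \eqref{QC}.} Let $u$ be a bounded solution of \eqref{SH} with $q\le 0$ and set $M_0=\|u\|_\infty$. First I would show that \emph{all} derivatives of $u$ are bounded. As $F\in C^2$ and $|u|\le M_0$, the term $F'(u)$ is bounded, and writing $u''''=-q\,u''-F'(u)$ gives $\|u''''\|_\infty\le |q|\,\|u''\|_\infty+C_0$ with $C_0=\sup_{[-M_0,M_0]}|F'|$. Inserting this into the Landau--Kolmogorov interpolation inequality $\|u''\|_\infty\le C\,\|u\|_\infty^{1/2}\|u''''\|_\infty^{1/2}$ produces a quadratic inequality for $\|u''\|_\infty$ that bounds it; the remaining derivatives $u''''$, $u'$, $u'''$ are then bounded as well. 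Testing \eqref{SH} with $u$ on $[-R,R]$ and integrating by parts yields
\[
\int_{-R}^R\big[(u'')^2-q\,(u')^2+F'(u)\,u\big]\,dx=-\big[u'''u-u''u'+q\,u'u\big]_{-R}^R .
\]
By \eqref{QC} and $q\le 0$ the integrand is pointwise nonnegative, while the a priori bounds make the right-hand side bounded uniformly in $R$; hence $\int_\R(u'')^2$, $\int_\R(-q)(u')^2$ and $\int_\R F'(u)u$ are all finite.

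In particular $u''\in L^2(\R)$, and since $u'''$ is bounded $u''$ is uniformly continuous, so $u''\to 0$ at $\pm\infty$. I would then upgrade this to decay of all first three derivatives via two elementary facts: (i) if $u$ is bounded and $u''\to 0$ then $u'\to 0$ (else $u'$ stays away from $0$ on intervals of diverging length, forcing unbounded oscillation of $u$); and (ii) if $u''\to 0$ and $u''''$ is bounded then $u'''\to 0$ (a one-sided Taylor estimate applied to $v=u''$). With $u'$, $u''$, $u'''\to 0$ and $u$ bounded, the bracket above tends to $0$ as $R\to\infty$, so the identity becomes $\int_\R(u'')^2-q\int_\R(u')^2+\int_\R F'(u)u=0$. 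Each summand being nonnegative, all vanish; in particular $\int_\R(u'')^2=0$, so $u''\equiv 0$, and the bounded affine function $u$ is constant. This argument is uniform in $q\le 0$ and covers $q=0$ with no change.

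\textbf{The equivalence.} Together with the first part, the ``moreover'' reduces to the converse: in the coercive, discrete-critical class, the absence of nontrivial bounded solutions forces \eqref{QC}. Equivalently, I would construct a nontrivial bounded solution whenever the failure of \eqref{QC} is realized by a constant state $c$ with $F'(c)=0$ and $F''(c)<0$ (an unstable, local-maximum equilibrium). The plan is to bifurcate from $u\equiv c$: for $q\le 0$ the linearization $v\mapsto v''''+q\,v''+F''(c)\,v$ has the nonconstant kernel element $\cos(kx)$ precisely when $k^4-q\,k^2+F''(c)=0$, and since $k\mapsto -k^2(k^2-q)$ covers all of $(-\infty,0]$ for $q\le0$, such a $k>0$ exists exactly because $F''(c)<0$. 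A Lyapunov--Schmidt/Crandall--Rabinowitz analysis (or, variationally, a minimax argument using that $u\equiv c$ is an unstable critical point of the action $\int[\tfrac12(u'')^2-\tfrac q2(u')^2+F(u)]$ on a large periodic cell) then yields a local branch of nonconstant $2\pi/k$-periodic solutions, which are bounded and nontrivial.

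\textbf{Main obstacle.} The delicate direction is this converse. One must first extract, from the failure of \eqref{QC} together with coercivity and discreteness of $\{F'=0\}$, the genuine barrier (an equilibrium with $F''<0$) that drives existence, disentangling it from merely unimodal configurations that violate \eqref{QC} only through the placement of the origin; and then render the bifurcation rigorous, either by checking the transversality/simplicity hypotheses or by completing the variational linking and its Palais--Smale condition. Passing from the \emph{local} branch to a global, everywhere-bounded family is the hardest point, and is where the conserved Hamiltonian $u'u'''-\tfrac12(u'')^2+\tfrac q2(u')^2+F(u)$ and the coercivity of $F$ should be used to rule out blow-up.
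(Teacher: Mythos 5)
Note first that this theorem is quoted in the paper from \cite{ms} without proof, so there is no internal argument to compare against; your proposal has to be judged on its own. Your first half (only constants among bounded solutions when \eqref{QC} holds and $q\le 0$) is essentially sound and is in the same spirit as the test-function/interpolation arguments the paper itself uses in Section 3: testing with $u$ on $[-R,R]$, the pointwise nonnegativity of $(u'')^2-q(u')^2+F'(u)u$, the decay claims (i)--(ii), and the limit identity forcing $u''\equiv 0$ are all correct. One technical point you should repair: the Landau--Kolmogorov step is circular as stated for $q<0$, since the quadratic inequality for $\|u''\|_\infty$ is vacuous unless you already know $\|u''\|_\infty<\infty$. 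You should instead run the interpolation on intervals of small length $\ell$ (so that the coefficient $C\ell^2|q|$ of the sup of $|u''|$ on the slightly larger interval is less than $1$) and iterate, using that solutions of an ODE with bounded nonlinearity grow at most at a fixed exponential rate; for $q=0$ there is no issue, since $u''''=-F'(u)$ is bounded outright. This is a fixable, standard matter.

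The genuine gap is in the converse, and it is twofold. First, \eqref{QC} is not translation invariant while the solution set of \eqref{SH} essentially is (replacing $u$ by $u+c$ replaces $F(t)$ by $F(t+c)$): a coercive unimodal potential with its unique minimum at $c\neq 0$, e.g. $F(t)=(t-c)^2$, violates \eqref{QC} and yet, by the first part applied to $t\mapsto F(t+c)$, admits only constant bounded solutions. So the equivalence must be read modulo this normalization (as must the statement itself, whose ``equivalent to the existence'' is plainly a slip for non-existence, which you correctly read through); you flag the issue (``placement of the origin'') but never resolve it, and your argument needs an explicit reduction showing that a genuine failure of \eqref{QC} produces $a<b$ with $F'(a)>0>F'(b)$, hence an interior local maximum of $F$. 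Second, and more seriously, your entire construction hinges on finding $c$ with $F'(c)=0$ and $F''(c)<0$, but in the stated class ($F\in C^2$ coercive with $\{F'=0\}$ discrete) the hilltop may be degenerate, $F''(c)=0$ (e.g. $F'$ vanishing to third order at the maximum); then the linearization $v''''+qv''+F''(c)v$ has no nonconstant kernel element, and both the Lyapunov--Schmidt route and the saddle-center/Lyapunov-family argument collapse, so your program provably cannot cover the whole class and a construction insensitive to the second-order Taylor expansion at the maximum (as in \cite{ms}) is required. By contrast, your declared ``hardest point'' --- globalizing a local branch and ruling out blow-up --- is a non-issue for this statement: any nonconstant periodic orbit obtained near the hilltop is already a globally defined bounded nontrivial solution, so no continuation or Hamiltonian blow-up analysis is needed.
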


Under assumption \eqref{QC}, one may still seek for global unbounded solutions. An immediate ODE argument shows that, if $F'$ is globally Lipschitz, all local solutions of \eqref{SH} are actually global (and thus, by the previous Theorem, unbounded).  However, the peculiar nature of \eqref{SH} allows the following {\em one-sided} generalization (notice that this holds {\em for any} $q\in \R$).

\begin{theorem}[Theorem 1 in \cite{bfgk}]
Let $q\in \R$ be arbitrary and $F\in C^2$ satisfy $F'(t)t>0$, for all $t\neq 0$. If 
\begin{equation}
\label{onesided}
\text{either}\quad \limsup_{t\to +\infty}\frac{F'(t)}{t}<+\infty \quad \text{or}\quad \limsup_{t\to -\infty}\frac{F'(t)}{t}<+\infty,
\end{equation}
then any solution to \eqref{SH} is globally defined.
\end{theorem}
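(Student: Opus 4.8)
The plan is to argue by contradiction, ruling out finite-time blow-up. Denote by $[0,T)$ the maximal interval of existence of a given solution; by standard ODE theory it suffices to show that if $T<+\infty$, then the phase-space norm
\[
N(t)=\tfrac12\big(u(t)^2+u'(t)^2+u''(t)^2+u'''(t)^2\big)
\]
stays bounded as $t\uparrow T$, contradicting maximality. The two structural tools I would rely on are, first, the conserved Hamiltonian obtained by multiplying \eqref{SH} by $u'$ and integrating,
\[
E(t)=u'u'''-\tfrac12(u'')^2+\tfrac q2 (u')^2+F(u)\equiv E_0,
\]
and second, the elementary consequence of \eqref{QC} that $F\ge 0$ with $F(0)=0$. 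Note also that \eqref{QC} forces $F'(t)\le 0$ for $t\le 0$, so that the one-sided hypothesis \eqref{onesided}, say $\limsup_{t\to+\infty}F'(t)/t<+\infty$, upgrades to a global upper bound $F'(t)\le C(1+|t|)$ valid for all $t\in\R$, and hence $0\le F(t)\le C(1+t^2)$ on the half-line where the growth is controlled.

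Next I would try to close a differential inequality for $N$. Using $u''''=-qu''-F'(u)$ one computes
\[
N'=uu'+u'u''+(1-q)u''u'''-u'''\,F'(u),
\]
and every term except the last is bounded by $C\,N$ via Young's inequality. If a two-sided linear bound on $F'$ were available, the last term would also be $\le C(1+N)$ and Gronwall's lemma would immediately give global existence; this is precisely the routine case alluded to in the text. Under the one-sided assumption, however, $F'(u)$ may grow superlinearly as $u\to-\infty$, and the term $u'''F'(u)$ becomes genuinely dangerous: eliminating it via the modified functional $N+u''F'(u)$ only replaces it with $u'u''F''(u)$, a term whose sign and size are not controlled by \eqref{onesided}, so the naive estimate degrades to a bound of the form $N'\le \phi(N)$ with $\int^\infty ds/\phi(s)<+\infty$, which does not by itself exclude blow-up.

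The crux, and the step I expect to be the main obstacle, is therefore to tame $u'''F'(u)$ on the superlinear side by exploiting the sign of the restoring force together with energy conservation. The guiding heuristic is that a finite-time singularity of \eqref{SH} must be of wide-oscillation type, with $u$ attaining arbitrarily large values of both signs on a shrinking sequence of intervals; on the side where \eqref{onesided} holds the restoring force is merely linear, which is too weak to reverse such oscillations in vanishing time. To make this rigorous I would combine the identity for $E_0$ (which pins $F(u)$, hence the excursions of $u$ towards the superlinear infinity, against $u'u'''-\tfrac12(u'')^2$) with the global bound $F'(t)\le C(1+|t|)$, aiming to show that $u$ itself remains bounded on $[0,T)$. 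Once $u$ is bounded, $F'(u)$ is bounded, the offending term is absorbed, and the Gronwall argument for $N$ closes, forcing $T=+\infty$. Establishing the a priori bound on $u$ under the merely one-sided hypothesis is the delicate point where the peculiar fourth-order structure of \eqref{SH} must be used.
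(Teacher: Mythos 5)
Your preliminary reductions are all correct: the computation of $N'$, the conservation of the Hamiltonian $E$, the upgrade of the one-sided hypothesis \eqref{onesided} to the global bound $F'(t)\le C(1+|t|)$ via the sign condition $F'(t)t>0$, the correct diagnosis that $-u'''F'(u)$ is the only term not absorbed by Gronwall, and the observation that the modified functional $N+u''F'(u)$ merely trades it for the uncontrollable term $u'u''F''(u)$. The closing step is also fine: once $u$ is bounded on $[0,T)$, continuity of $F'$ bounds $F'(u)$, Gronwall bounds $N$, and maximality of $T$ is contradicted. But between these two ends there is a genuine gap: you reduce the theorem to the a priori statement \emph{$u$ is bounded on the maximal interval}, and in place of a proof you offer only the heuristic that a finite-time singularity ``must be of wide-oscillation type'' and that a linear restoring force on one side ``is too weak to reverse such oscillations in vanishing time.'' That heuristic is not a lemma you derive --- it \emph{is} the theorem. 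Note also that the paper itself does not prove this statement; it quotes it as Theorem 1 of \cite{bfgk}, and the substantive work in that reference consists precisely of the two steps you leave open: (i) showing that under $F'(t)t>0$, blow-up at a finite $T$ forces $\limsup_{x\to T^-}u(x)=+\infty$ \emph{and} $\liminf_{x\to T^-}u(x)=-\infty$, and (ii) excluding the unbounded excursions on the side where $F'$ grows at most linearly.

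The specific point where your sketch would fail if pushed naively is the claim that the identity $E\equiv E_0$ ``pins $F(u)$, hence the excursions of $u$ towards the superlinear infinity, against $u'u'''-\tfrac12(u'')^2$.'' It does not: at a point where $u$ is large on the superlinear side, $F(u)$ is large and positive, but $u'u'''-\tfrac12(u'')^2$ can be correspondingly large and negative, so energy conservation yields no pointwise bound on $u$ whatsoever. Extracting a bound requires integrated information --- interpolation or localized integral estimates on shrinking intervals near the putative blow-up time (in the spirit of Bernis' technique used in Section 3 of this paper, or of the blow-up analysis in \cite{bfgk} and \cite{gk}) --- and none of this machinery appears in your proposal. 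Since you yourself flag this as ``the delicate point where the peculiar fourth-order structure must be used'' without supplying the argument, what you have is an accurate road map with its decisive lemma missing, not a proof.
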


Regarding non-existence, Gazzola and Karageorgis proved the following. Recall that the {\em Hamiltonian energy} of a solution $u$ of \eqref{SH} is
\begin{equation}
\label{he}
E_u(x):=\frac{q}{2}|u'(x)|^2+u'(x)u'''(x)+F(u(x))-\frac{|u''(x)|^2}{2},
\end{equation}
and testing \eqref{SH} with $u'$ shows that $E_u$ is constant for any such solution. Clearly, for local solutions of the ODE, $E_u$ can be  arbitrarily assigned through the initial conditions on $u$.

\begin{theorem}[Theorem 3 in \cite{gk}]
Let $q\le 0$. Suppose $F$ is a convex potential satisfying
\[
F(0)=0,\qquad F'(t)t\geq c|t|^{2+\eps} \quad \text{for $\eps>0$},\qquad F'(t)t\geq c F(t)\quad \forall \, |t|>>1
\]
for some $c>0$ and 
\[
\liminf_{|t|\to +\infty} \frac{F(\lambda t)}{F(t)^\alpha}>0
\]
for some $\lambda\in \ ]0, 1[$, $\alpha>0$. If $u$ solves \eqref{SH} in a neighborhood of $0$ and 
\begin{equation}
\label{conds}
\text{either}\quad u'(0)u''(0)-u(0)u'''(0)-qu(0)u'(0)\neq 0\qquad
 \text{or}\quad E_u\neq 0,
\end{equation}
then $u$ blows up in finite time.
\end{theorem}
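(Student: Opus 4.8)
The engine of the proof is the monotonicity of the quantity figuring in the first alternative of \eqref{conds}. Set
\[
P(x):=u'(x)u''(x)-u(x)u'''(x)-q\,u(x)u'(x),
\]
so that $P(0)$ is exactly the left-hand side of that alternative. Using \eqref{SH} to eliminate $u''''$, a direct computation gives
\[
P'=(u'')^2-q\,(u')^2+u\,F'(u).
\]
Since $q\le 0$ and $F'(t)t\ge c|t|^{2+\eps}$, every term on the right is nonnegative, so $P$ is nondecreasing and in fact $P'\ge c\,|u|^{2+\eps}$. Moreover $P=G'$ for $G:=(u')^2-uu''-\tfrac q2u^2$, hence $G$ is convex; note also that the hypotheses force $F\ge 0$.

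The first step is to reduce the two alternatives in \eqref{conds} to the single statement that $P(x)\to+\infty$. Replacing $u(x)$ by $u(-x)$, which solves the same equation and changes the sign of $P(0)$, I may assume in the first alternative that $P(0)>0$; then $G'=P\ge P(0)>0$ and $G$ grows at least linearly. In the second alternative $P(0)=0$ but $E_u\ne0$; here $P\not\equiv0$, for otherwise $P'\equiv0$ would force $u''\equiv0$, hence $u$ affine and, by \eqref{SH}, $F'(u)\equiv0$, so $u\equiv0$ and $E_u=0$, a contradiction. Thus $P(x_1)>0$ for some $x_1$, and by monotonicity $P\ge P(x_1)>0$ thereafter. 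In either case one argues by contradiction: assume $u$ is global on $[0,\infty)$, so that $P(x)\to+\infty$.

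The heart of the matter is a quantitative blow-up estimate. Integrating the identity for $P'$ yields the a priori bounds
\[
\int_0^x(u'')^2\,ds,\quad -q\int_0^x(u')^2\,ds,\quad \int_0^x u\,F'(u)\,ds\ \le\ P(x),
\]
and in particular $\int_0^x|u|^{2+\eps}\,ds\le c^{-1}P(x)$. The plan is to feed these, together with the conserved Hamiltonian \eqref{he} and the integrated identity $\int_0^x G=2\int_0^x(u')^2\,ds-[uu']_0^x-\tfrac q2\int_0^x u^2\,ds$, into a positive functional $\Phi$ comparable to a power of $\int_0^x u^2\,ds$. The growth hypothesis bounds $\Phi''$ from below by a superlinear power of $\Phi$, while the a priori bounds control $\Phi$ and its lower-order contributions from above, producing a differential inequality of concavity type $\Phi''\ge k\,\Phi^{1+\delta}$ with $\delta>0$. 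Multiplying by $\Phi'$ and integrating, such an inequality is incompatible with global existence and forces $\Phi$, and hence $u$, to blow up in finite time.

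The main obstacle is precisely the closing of this inequality for a general, non-homogeneous potential. For a pure power $F(t)\sim|t|^p$ the comparison between $\int F(u)$, $\int u^2$ and $\int uF'(u)$ is immediate by homogeneity; for general $F$ one must instead compare the potential across different scales, and this is exactly the role of the three structural hypotheses. The bound $F'(t)t\ge cF(t)$ converts the energy term $\int F(u)$ into a multiple of $\int uF'(u)\le P$; the bound $F'(t)t\ge c|t|^{2+\eps}$ supplies the superlinearity, ultimately fixing the exponent $\delta=\eps/2$; and $\liminf_{|t|\to\infty}F(\lambda t)/F(t)^{\alpha}>0$ lets one dominate the boundary and lower-order terms, such as $[uu']_0^x$ and $\int u^2\,ds$, by controlled powers of $P$ via Hölder's inequality. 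The delicate part is to track these exponents faithfully through the scale comparisons and to verify that the resulting power of $\Phi$ is genuinely superlinear; this is where the technical weight of the argument lies.
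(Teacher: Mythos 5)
Your opening moves are correct and do match the strategy behind this result (which the paper itself does not prove --- it is quoted from \cite{gk}; the only place the paper uses this machinery is the proof of the Corollary that follows it). The identity $P'=(u'')^2-q(u')^2+uF'(u)$ is right, as is $P=G'$ with $G=(u')^2-uu''-\tfrac{q}{2}u^2$; the reflection $x\mapsto -x$ does flip the sign of $P(0)$; and your elimination of the case $P\equiv 0$ under $E_u\neq 0$ is sound, since $F'(t)t\ge c|t|^{2+\eps}$ forces $F'$ to vanish only at $t=0$, whence $u\equiv 0$ and $E_u=0$. Up to this point you have essentially reproduced the algebraic skeleton that the paper deploys to prove its Corollary.

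However, the heart of the proof is missing: from ``The heart of the matter'' onward you describe a plan, not an argument. No functional $\Phi$ is actually defined; the inequality $\Phi''\ge k\,\Phi^{1+\delta}$ is never derived; and the two structural hypotheses $F'(t)t\ge cF(t)$ and $\liminf_{|t|\to\infty}F(\lambda t)/F(t)^{\alpha}>0$ --- which you correctly identify as the crux for non-homogeneous potentials --- are assigned roles but never used in a single estimate. You yourself flag ``the technical weight of the argument'' as undone, and that weight \emph{is} the theorem. Two specific claims are unproved and do not follow as stated. First, ``assume $u$ is global on $[0,\infty)$, so that $P(x)\to+\infty$'': a nondecreasing, eventually positive $P$ may converge to a finite limit; to get $P\to+\infty$ one must first extract growth of $\int_0^x u^2\,ds$ from $G'=P\ge P(x_1)>0$ (note $G=(u')^2-uu''-\tfrac{q}{2}u^2$ contains the signless term $-uu''$, so even this step needs care) and feed it back through $P'\ge c|u|^{2+\eps}$ via H\"older; none of this is carried out. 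Second, ``multiplying by $\Phi'$ and integrating'' preserves the inequality only where $\Phi'\ge 0$, so one must first show $\Phi'$ is eventually positive --- e.g.\ by ruling out the alternative that $\Phi$ decreases to a limit --- before the concavity-type lemma yields finite-time blow-up. Until the functional $\Phi$ is constructed and the differential inequality is closed using all three structural hypotheses, the proposal establishes only the easy monotonicity framework, not the blow-up.
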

Actually, the result proved in \cite{gk} gives much more informations on the location and behavior of the blow-up phenomenon. However, the previous Theorem immediately gives the following consequence, which we state here, due to its relevance in our framework.

\begin{corollary}
Let $q\le 0$. Suppose $F$ satisfies the assumptions of the previous theorem.
Then, the only globally defined solution to \eqref{SH} is $u\equiv 0$.
\end{corollary}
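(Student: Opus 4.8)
The plan is to derive the Corollary directly from the preceding blow-up Theorem by showing that a globally defined solution must violate the conclusion of that Theorem. The logic is contrapositive: if $u$ is a solution defined in a neighborhood of $0$ and $u\not\equiv 0$, I want to conclude that $u$ blows up in finite time, hence cannot be globally defined. To invoke the Theorem, I must verify that every nontrivial solution satisfies at least one of the two alternatives in \eqref{conds}, namely that $u'(0)u''(0)-u(0)u'''(0)-qu(0)u'(0)\neq 0$ or $E_u\neq 0$.

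\medskip

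First I would argue by contradiction: suppose $u$ is a globally defined, nontrivial solution. By the Theorem, $u$ blows up in finite time unless both alternatives in \eqref{conds} fail, i.e.
\[
u'(0)u''(0)-u(0)u'''(0)-qu(0)u'(0)=0\qquad\text{and}\qquad E_u=0.
\]
Since $u$ is global it cannot blow up, so these two equalities must hold. The crux is then to show that these two constraints, together with the structure of \eqref{SH} and the quasi-convexity forced by the hypotheses (recall $F(0)=0$, $F$ convex, and $F'(t)t\ge c|t|^{2+\eps}$, which gives $F\ge 0$ with $F(t)=0$ only at $t=0$), force $u\equiv 0$. Here I would note that the first quantity is, up to sign, a conserved quantity associated to \eqref{SH}: differentiating $w(x):=u'(x)u''(x)-u(x)u'''(x)-qu(x)u'(x)$ and using the equation, one checks that $w'(x)=-u(x)F'(u(x))\le 0$ by \eqref{QC}. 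Thus the vanishing of $w$ at $0$, combined with monotonicity, pins down the sign of $w$ on either side of $0$ and links it to the definite-sign term $u\,F'(u)$.

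\medskip

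The main step is to combine $E_u\equiv 0$ with the information on $w$ to run a convexity/monotonicity argument on an auxiliary functional. A natural candidate is to test against $u$ itself and integrate: integrating the identity $w'(x)=-u(x)F'(u(x))$ over $[0,x]$ gives $w(x)=-\int_0^x u\,F'(u)\,ds$, so $w$ is nonincreasing and $w(0)=0$ forces $\int_0^x u F'(u)\,ds\le 0$ for $x\ge 0$ and $\ge 0$ for $x\le 0$; but the integrand is $\ge 0$ everywhere by \eqref{QC}, hence $u(x)F'(u(x))\equiv 0$, and by $F'(t)t\ge c|t|^{2+\eps}$ this gives $u\equiv 0$ on all of $\R$. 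This is the cleanest route, and it is essentially forced once the two failure-conditions in \eqref{conds} are assumed. I would present the conserved-quantity computation carefully, since it is the one genuinely new calculation.

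\medskip

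The hard part will be verifying that $w$ is indeed (up to a constant multiple and sign) governed by $w'=-u\,F'(u)$ rather than some other combination, i.e. getting the differentiation of the boundary term exactly right against the fourth-order equation; a sign error there would collapse the argument. A secondary subtlety is that the blow-up Theorem is stated for solutions defined only near $0$, so I must make sure the application is to the germ of $u$ at $0$ and that ``globally defined'' genuinely contradicts ``blows up in finite time''—this is immediate, but worth a sentence. If the direct monotonicity argument on $w$ should fail to close (for instance if the boundary identity does not simplify to a sign-definite derivative), the fallback is to simply quote the Theorem as a black box: any nontrivial solution satisfies \eqref{conds} because the set where \emph{both} quantities vanish is, generically, incompatible with a nonconstant global trajectory, and then non-blow-up plus the Theorem yields $u\equiv 0$. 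I expect the clean monotonicity proof to work, making the Corollary a short and transparent consequence.
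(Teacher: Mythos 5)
There is a genuine gap, and it has two parts. First, your key computation is wrong: with $w(x):=u'(x)u''(x)-u(x)u'''(x)-qu(x)u'(x)$, differentiating and substituting $u''''+qu''=-F'(u)$ gives
\begin{equation*}
w'=|u''|^2-u\,(u''''+qu'')-q|u'|^2=|u''|^2-q|u'|^2+u\,F'(u),
\end{equation*}
not $w'=-u\,F'(u)$. For $q\le 0$ and under \eqref{QC} this is a sum of three \emph{nonnegative} terms, so $w$ is nondecreasing — the opposite monotonicity from what you claim, and with extra terms you dropped. You flagged exactly this risk yourself, and it does collapse your argument. Second, even granting your identity, the deduction from $w(0)=0$ is circular: from $w(x)=-\int_0^x uF'(u)\,ds$ with nonnegative integrand, the sign constraints you derive ($w\le 0$ for $x\ge 0$, $w\ge 0$ for $x\le 0$) are \emph{automatically} satisfied and produce no contradiction. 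Knowing that \eqref{conds} fails only at the single basepoint $x_0=0$ simply does not force $u\equiv 0$; monotone functions vanishing at one point need not vanish identically.

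The missing idea is translation invariance, which is the heart of the paper's proof. Since $u(\cdot+x_0)$ is a global solution for \emph{every} $x_0\in\R$, the blow-up theorem forces the first condition in \eqref{conds} to fail at every basepoint, i.e.\ $w\equiv 0$ on all of $\R$. Then $w'\equiv 0$, and the correct identity above exhibits $w'$ as a sum of nonnegative terms, so each vanishes; in particular $u\,F'(u)\equiv 0$, and $F'(t)t\ge c|t|^{2+\eps}$ gives $u\equiv 0$. Note that this route never needs $E_u=0$ (which you extract but then never use), nor any monotonicity argument: once $w$ vanishes identically rather than at one point, the conclusion is immediate. Your fallback paragraph ("generically incompatible") is not a proof. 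To repair your write-up: replace the single application of the theorem at $0$ by its application to all translates, fix the derivative of $w$, and the argument closes in three lines.
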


\begin{proof}
By translation invariance, $u(x+x_0)$ is a global solution to \eqref{SH}, for any $x_0\in \R$. Therefore, the first condition in \eqref{conds} must fail at any $x_0\in \R$, which implies that
\[
u'u''-uu'''-quu'\equiv 0.
\]
Deriving this relation, we obtain
\[
|u''|^2-u(u''''+qu'')-q|u'|^2=|u''|^2-q|u'|^2+F'(u)u\equiv 0
\]
and, being $F'(t)t\ge c|t|^{2+\eps}$ and $q\le 0$, we immediately deduce $u\equiv 0$.
\end{proof}

Using a classical technique due to Bernis \cite{b} and  \cite[Theorem 3.1]{ms}, we will remove most of the previous assumptions, proving the following result.
\begin{theorem}
\label{th1}
Let $q\le 0$, $ F \in C^2 $ be such that \eqref{QC} holds and 
\begin{equation} 
\label{liminf0}
\liminf_{|t| \to \infty} \frac{F'(t) }{t|t|^\eps}> 0, \qquad \eps>0.
\end{equation}
Then, the only globally defined solutions of \eqref{SH} are constants.
\end{theorem}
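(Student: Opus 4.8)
The plan is to reduce the claim to the non-existence result for \emph{bounded} solutions quoted above. Since $q\le0$ and \eqref{QC} hold, \cite[Theorem 3.1]{ms} guarantees that every bounded solution of \eqref{SH} is constant, so it suffices to prove that every globally defined solution is bounded. Note first that \eqref{liminf0} upgrades \eqref{QC} to a genuinely superlinear bound: there are $c,M>0$ with
\[
F'(t)\,t\ge c\,|t|^{2+\eps}\qquad\text{for }|t|\ge M,
\]
and it is precisely this superlinearity that will make the forthcoming integral estimate close.

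The core of the argument is a localized energy estimate of Bernis / Mitidieri--Pohozaev type. I would fix $\phi\in C^\infty_c(\R)$ with $\phi\equiv1$ on $[-1,1]$, $\operatorname{supp}\phi\subset[-2,2]$ and $0\le\phi\le1$, put $\phi_R(x)=\phi(x/R)$, and test \eqref{SH} against $u\,\phi_R^k$ for a large fixed exponent $k$. Moving two derivatives onto the weight and using $q\le0$ together with \eqref{QC}, this produces the identity
\[
\int_\R\big(|u''|^2+|q|\,|u'|^2+F'(u)u\big)\phi_R^k\,dx
=-2\int u''u'(\phi_R^k)'-\int u''u\,(\phi_R^k)''+q\int u'u\,(\phi_R^k)',
\]
in which every term on the left is nonnegative. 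All the terms on the right are supported in the annulus $A_R=\{R\le|x|\le2R\}$ and each carries a factor $(\phi_R^k)'$ or $(\phi_R^k)''$, hence a negative power of $R$.

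I would then estimate the right-hand side by Young's and H\"older's inequalities, always keeping the full weight $\phi_R^k$ so that the ``good'' pieces can be absorbed into the left-hand side. Concretely, each $|u''|$ is split off into $\tfrac16\int|u''|^2\phi_R^k$, while the companion factors produce $R^{-2}\int_{A_R}|u'|^2\,dx$ and $R^{-4}\int_{A_R}|u|^2\,dx$ (up to bounded weights); the terms in $|u|^2$ are controlled, on $\{|u|>M\}$, by $\big(\int F'(u)u\,\phi_R^k\big)^{2/(2+\eps)}$ times a positive power of $R$ coming from $|A_R|$, and on $\{|u|\le M\}$ by $M^2|A_R|$. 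Because $\eps>0$, all the exponents of $R$ that survive are strictly negative and a fixed fraction of each left-hand term is absorbed; letting $R\to\infty$ yields
\[
\int_\R\big(|u''|^2+F'(u)u\big)\,dx<\infty,
\]
so that $u''\in L^2(\R)$ and, by the superlinear bound, $\int_{\{|u|>M\}}|u|^{2+\eps}\,dx<\infty$.

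These two facts force boundedness. For any $x_0$, splitting $\{|u|\le M\}$ from $\{|u|>M\}$ and applying H\"older on the unit interval $(x_0-1,x_0+1)$ gives $\|u\|_{L^2(x_0-1,x_0+1)}\le C_0$ with $C_0$ independent of $x_0$; then the embedding $W^{2,2}\hookrightarrow L^\infty$ on a unit interval, combined with the interpolation $\|u'\|_{L^2}\le C(\|u\|_{L^2}+\|u''\|_{L^2})$, yields $\|u\|_{L^\infty(x_0-1,x_0+1)}\le C(C_0+\|u''\|_{L^2(\R)})$, again uniform in $x_0$. Hence $u$ is bounded, and \cite[Theorem 3.1]{ms} forces $u$ to be constant. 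The main obstacle is the core estimate, and within it the control of the first-order annular term $\int_{A_R}|u'|^2$: in the borderline case $q=0$ there is no good $L^2$-term in $u'$ on the left to absorb it, so it must instead be recovered by a Gagliardo--Nirenberg interpolation between $\int|u''|^2$ and $\int|u|^2$ over a neighbourhood of $A_R$, after which the bookkeeping of the powers of $R$ is what makes the limit $R\to\infty$ decisive.
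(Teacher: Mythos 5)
Your overall strategy is the paper's: reduce via \cite[Theorem 3.1]{ms} to showing boundedness, test \eqref{SH} against $u\varphi_R^k$, absorb the good terms, play H\"older and Young against the superlinear bound $F'(t)t\ge c|t|^{2+\eps}$, and let $R\to\infty$. Your endgame ($u''\in L^2(\R)$ plus uniform local $L^2$ bounds, then $W^{2,2}\hookrightarrow L^\infty$ on unit intervals) is a little more roundabout than the paper's, which concludes $\|u\|_\infty\le K$ directly from $\int_{\{|u|>K\}}|u|^{2+\eps}\varphi_R^m\,dx\to 0$, but it is correct.

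The genuine gap is at exactly the step you single out as the main obstacle: the first-order annular term when $q=0$. The \emph{unweighted} Gagliardo--Nirenberg interpolation you propose on a neighbourhood $A_R'$ of $A_R$ reads
\[
\int_{A_R'}|u'|^2\,dx \le C\Bigl(\int_{A_R'}|u''|^2\,dx\Bigr)^{\frac12}\Bigl(\int_{A_R'}u^2\,dx\Bigr)^{\frac12}+\frac{C}{R^2}\int_{A_R'}u^2\,dx,
\]
and after multiplying by $R^{-2}$ and applying Young you are left with a term $\delta\int_{A_R'}|u''|^2\,dx$ carrying \emph{no cutoff}. This cannot be absorbed into $\frac16\int|u''|^2\varphi_R^k\,dx$, because $\varphi_R^k$ degenerates on the outer part of the annulus (it vanishes at $|x|=2R$), so the left-hand side controls nothing there. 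Nor can you dominate it by the same quantity at scale $2R$: that turns the estimate into a recursion of the form $Y(R)\le\theta\,Y(2R)+g(R)$, which is useless without an a priori subexponential growth bound on $\int_{|x|\le\rho}\bigl(|u''|^2+F'(u)u\bigr)dx$ --- precisely what is unavailable, since boundedness of $u$ is what you are trying to prove. So the bottleneck is not, as you suggest, the bookkeeping of the powers of $R$, but the absorption itself.

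The repair is to keep the cutoff \emph{inside} the interpolation, and this is exactly Bernis's device \cite{b} used in the paper: since $-2\int u''u'\eta'\,dx=\int|u'|^2\eta''\,dx$, two further integrations by parts give
\[
\int|u'|^2\,\eta''\,dx=-\int u\,u''\,\eta''\,dx+\frac12\int u^2\,\eta''''\,dx,\qquad \eta=\varphi_R^m,
\]
after which Young's inequality splits $|u\,u''\,\eta''|\le\frac12|u''|^2\eta+\frac12 u^2|\eta''|^2/\eta$, with $|\eta''|^2/\eta\le CR^{-4}\varphi_R^{m-4}$ and $|\eta''''|\le CR^{-4}\varphi_R^{m-4}$ for $m$ large. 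Every surviving term is either absorbable into the weighted left-hand side or of the form $R^{-2}\int u^2\varphi_R^{m-4}\,dx$, and your H\"older/Young scheme against $F'(u)u$ then closes verbatim. Equivalently, a weighted Cauchy--Schwarz splitting the powers of the cutoff, $\varphi_R^{m-2}=\varphi_R^{(m-4)/2}\varphi_R^{m/2}$, makes your interpolation legitimate. With that one replacement, your proof goes through and coincides in substance with the paper's.
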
 

It is worth noting that, while \eqref{onesided} and \eqref{liminf0} are roughly complementary conditions  to establish (or rule out) existence of global nontrivial solutions to \eqref{SH}, a truly necessary and sufficient condition is still missing. It is worth comparing with the well known Keller-Osserman necessary and sufficient condition 
\[
\int_{*}^{\pm\infty} |F(t)|^{-\frac 1 2}\, dt<+\infty
\]
for the blow-up of solutions to the second order ODE $u''-F'(u)=0$. Even for $q=0$, no such integral optimal condition is known for \eqref{SH}.

\subsection{The S-H case} As we will see, the situation for $q\ge 0$ is more complex.

Regarding non-existence of nontrivial solutions, the seemingly most up-to date results are the following.

\begin{theorem}[Theorem 1 in \cite{rtt}]
Let $F\in C^2$ satisfy 
\begin{equation}
\label{h0}
a|t|^{p+1}\le F'(t)t\le b|t|^{r+1}+ c|t|^{p+1},\quad \text{for some $a, b, c>0$ and $1\le r<p$ }.  
\end{equation}
Then, for any $q>0$\footnote{In \cite{rtt}, this theorem is actually proved for $0<q\le 2$, but a simple scaling argument shows its validity for any $q>0$. Indeed, if $u$ solves \eqref{SH}, then $u_\lambda(x):=u(x/\sqrt{\lambda})$ solves $u_\lambda''''+\lambda q u_\lambda''+F'_\lambda(u_\lambda)=0$, where $F_\lambda(t):=\lambda^2 F(t)$ satisfies \eqref{h0} with the same exponents and with constants $a, b, c$ multiplied by $\lambda^2$. If $\lambda\le 2/q$, one can then apply Theorem 1 in \cite{rtt}.}, there exists $E_0=E_0(a, b, c, p, r, q) \ge 0$ such that any solution to \eqref{SH}, satisfying $E_u>E_0$ (see \eqref{he} for the definition of $E_u$), blows up in finite time.

\end{theorem}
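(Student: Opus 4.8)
The plan is to argue by contradiction: suppose a solution $u$ of \eqref{SH} with $E_u=E>E_0$ (threshold to be fixed) extends to a global solution on a half-line, say $[0,+\infty)$, and then show that the ``transit time'' of $u$ is in fact finite. The only structural tools available are the conserved Hamiltonian \eqref{he} and the two-sided power control coming from \eqref{h0}; integrating the bounds $a|t|^{p+1}\le F'(t)t$ and $F'(t)t\le b|t|^{r+1}+c|t|^{p+1}$ gives the matching potential estimates $\frac{a}{p+1}|t|^{p+1}\le F(t)\le \frac{b}{r+1}|t|^{r+1}+\frac{c}{p+1}|t|^{p+1}$, which I would use throughout. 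The elementary but decisive remark is that at every critical point $\xi$ of $u$ (i.e.\ $u'(\xi)=0$) the Hamiltonian \eqref{he} collapses to $F(u(\xi))=E+\tfrac12|u''(\xi)|^2\ge E$; hence each extremal value of a global solution is already bounded below in modulus by a quantity that increases with $E$.

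First I would establish that a global solution with $E$ large is forced to oscillate, producing an increasing sequence of critical points $\xi_1<\xi_2<\cdots$ with extremal values $a_k:=u(\xi_k)$ of alternating sign. Quasi-convexity \eqref{QC} together with the superlinearity $p>1$ makes \eqref{SH} behave like a nonlinear oscillator with restoring force $F'(u)$; the point is to exclude that $u$ is eventually monotone, since an eventually monotone bounded orbit would converge to a zero of $F'$ and violate the identity $F(u(\xi))\ge E$ once $E>E_0$, while an eventually monotone unbounded orbit already blows up.

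The hard part will be the next step: showing that along a global orbit the amplitudes cannot stay bounded, i.e.\ $|a_k|\to+\infty$. Bounded oscillations would merely give a bounded global solution (as indeed happens for small energies, consistent with the periodic solutions constructed later in the paper), so this is exactly where the threshold $E_0$ and the strict ordering $r<p$ must enter. The plan is to extract a monotone secondary quantity by integrating \eqref{SH} against suitable multipliers (for instance $u'''$ or $u'$) between consecutive critical points; the sign $q>0$ and the fact that the leading $|t|^{p+1}$ contribution dominates the lower-order $|t|^{r+1}$ term when $|u|$ is large should yield $|a_{k+1}|\ge|a_k|$ and, more quantitatively, a strict gain that accumulates to $|a_k|\to+\infty$. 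Controlling the sign-indefinite cross term $u'u'''$ appearing in \eqref{he} is the principal analytic difficulty, and it is the reason one cannot expect a naive concavity argument on $\int_0^x u^2$ to work.

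Finally I would convert the amplitude blow-up into a finite transit time. On the excursion $[\xi_k,\xi_{k+1}]$ the solution runs between extrema of size comparable to $|a_k|$, and energy conservation lets me bound the length of the excursion by a nonlinear-pendulum type integral; since the potential is superlinear of order $p+1$, the transit time scales like $|a_k|^{(1-p)/2}$, which is summable because $p>1$. Consequently $\lim_k\xi_k<+\infty$, contradicting global existence, so $u$ must blow up in finite time. The constant $E_0=E_0(a,b,c,p,r,q)$ is precisely the energy level above which the amplitude-growth mechanism of the third step is triggered, and the scaling reduction described in the footnote then removes the restriction $q\le 2$.
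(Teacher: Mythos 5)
Before anything else, note what the paper actually does with this statement: it is quoted verbatim from \cite{rtt}, and the only original content is the footnote, which extends the range $0<q\le 2$ of the cited theorem to all $q>0$ by the rescaling $u_\lambda(x)=u(x/\sqrt\lambda)$, $F_\lambda=\lambda^2F$. So you are attempting from scratch a proof that the paper delegates entirely to a citation. Your skeleton is indeed the architecture of the known blow-up proofs for fourth-order superlinear equations (Gazzola--Pavani style, which \cite{rtt} adapts): the pointwise identity at critical points, $u'(\xi)=0\Rightarrow F(u(\xi))=E_u+\tfrac12|u''(\xi)|^2\ge E_u$, forcing oscillation; amplitude growth along extrema; and finite total transit time from superlinearity. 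That much is sound.

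However, the proposal has a hole exactly at its center, and you say so yourself: the amplitude-growth step (``the plan is to extract a monotone secondary quantity by integrating \eqref{SH} against suitable multipliers \dots should yield $|a_{k+1}|\ge|a_k|$'') exhibits no multiplier, no monotone functional, and no mechanism by which $E_u>E_0$, $q>0$ and $r<p$ enter; without it there is no proof, only a statement of intent. Three further gaps compound this. First, even granting $|a_k|\to+\infty$, your conclusion that the transit times are ``summable because $p>1$'' is a non sequitur: $\sum_k|a_k|^{(1-p)/2}$ converges only if the amplitudes grow quantitatively (e.g.\ geometrically), which is an additional and nontrivial estimate in the cited proofs. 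Second, the ``nonlinear-pendulum'' bound on excursion lengths is a genuinely second-order mechanism: for \eqref{SH} the conserved quantity \eqref{he} contains the indefinite terms $u'u'''$ and $-\tfrac12|u''|^2$, so $|u'|$ is \emph{not} a function of $u$ and $E_u$, and the excursion-time integral does not follow from conservation alone --- you correctly identify the cross term as the principal difficulty and then rely on the very estimate it obstructs. Third, your dismissal of eventually monotone unbounded orbits (``already blows up'') is wrong as stated: unboundedness as $x\to+\infty$ is not finite-time blow-up, and this branch must be excluded by an actual argument (for instance a Bernis-type test-function estimate in the spirit of Section 3, or a convexity argument on the equation), not by assertion. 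In short: correct heuristic frame, but the threshold mechanism defining $E_0$ --- the entire content of the theorem --- is missing.
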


\begin{theorem}[Theorem 1 in \cite{fd}]
\label{fd}
Let $F\in C^2$ satisfy \eqref{h0}  and
\begin{equation}
\label{h1}
F''(t)>F''(0),\quad \text{for all $t\neq 0$}.
\end{equation}
If $q>0$ satisfies $q^2\le 4F''(0)$, the only globally defined solution to \eqref{SH} is $u\equiv 0$.
\end{theorem}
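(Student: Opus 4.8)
The plan is to exhibit a monotone quantity along the solution which, for bounded solutions, is forced to be constant and thereby collapses the solution to zero. Set $k:=F''(0)$; since $q>0$ and $q^2\le 4F''(0)$, necessarily $k\ge q^2/4>0$. From \eqref{h0} one reads off $F'(0)=0$ and $F'(t)\,t\ge 0$, while integrating \eqref{h1} twice (using $F'(0)=0$) gives the pointwise strict inequalities $F'(t)\,t>k\,t^2$ and $F(t)>\tfrac12 k\,t^2$ for every $t\neq 0$.

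Next I introduce the boundary functional $\mathcal{B}:=u'u''-u\,u'''$ and compute, using \eqref{SH} to replace $u''''=-qu''-F'(u)$,
\[
\mathcal{B}'=(u'')^2-u\,u''''=(u'')^2+q\,u\,u''+u\,F'(u)=\Big(u''+\tfrac q2 u\Big)^2+\Big(k-\tfrac{q^2}{4}\Big)u^2+\big(u\,F'(u)-k\,u^2\big).
\]
Each of the three terms on the right is nonnegative: the coefficient $k-q^2/4\ge 0$ by hypothesis, and $u\,F'(u)-k\,u^2\ge 0$ by the inequalities above. Hence $\mathcal{B}$ is nondecreasing, and in fact $\mathcal{B}'>0$ at every point where $u\neq 0$. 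The whole argument then reduces to controlling $\mathcal{B}$ at $\pm\infty$: if one knows that a global solution is bounded together with its first three derivatives, then $\mathcal{B}$ is bounded and monotone, whence $\mathcal{B}'\in L^1(\R)$ and, in particular, $\int_\R\big(u\,F'(u)-k\,u^2\big)\,dx<+\infty$.

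From this integrability I would deduce $u(x)\to 0$ as $x\to\pm\infty$ by a measure argument. Since $u'$ is bounded, $u$ is uniformly continuous, so if $\limsup_{x\to+\infty}|u|=A>0$ then the set $\{|u|\ge A/4\}$ would contain infinitely many disjoint intervals of a fixed positive length, on each of which $u\,F'(u)-k\,u^2$ is bounded below by a positive constant, contradicting the finiteness of the integral. Once $u\to 0$ at infinity, a short bootstrap yields the same for the derivatives: the function $u''+\tfrac q2 u$ lies in $L^2(\R)$ and is uniformly continuous (its derivative $u'''+\tfrac q2 u'$ being bounded), hence vanishes at infinity, so $u''\to 0$; then $u'\to 0$ by a mean value estimate, and since $u''''=-qu''-F'(u)\to 0$ one gets $u'''\to 0$ likewise. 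Consequently $\mathcal{B}(\pm\infty)=0$; a nondecreasing function with equal finite limits is constant, so $\mathcal{B}'\equiv 0$, which forces $u\,F'(u)-k\,u^2\equiv 0$ and therefore $u\equiv 0$.

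The main obstacle is precisely the a priori boundedness of global solutions (and of $u',u'',u'''$), which is what guarantees that $\mathcal{B}$ is a bounded monotone function instead of one escaping to $+\infty$. I expect to obtain it from the conserved Hamiltonian energy \eqref{he} together with the superquadratic, coercive lower bound in \eqref{h0}, so that the potential term dominates the indefinite kinetic contributions and unbounded global trajectories are ruled out—equivalently, one shows that an unbounded solution must blow up in finite time, in the spirit of the blow-up analysis available for \eqref{SH}. The remaining steps, by contrast, are soft consequences of the monotonicity of $\mathcal{B}$ and are insensitive to whether $q^2<4k$ or $q^2=4k$.
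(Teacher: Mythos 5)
Your monotonicity computation is correct: with $k=F''(0)\ge q^2/4>0$, the function $\mathcal{B}=u'u''-uu'''$ satisfies $\mathcal{B}'=(u''+\tfrac q2 u)^2+(k-\tfrac{q^2}{4})u^2+(uF'(u)-ku^2)\ge 0$, with strict positivity where $u\neq 0$, and your endgame (monotone $\mathcal{B}$ bounded, hence $\mathcal{B}'\in L^1$, hence $u\to 0$ at infinity by uniform continuity, then $u',u'',u'''\to 0$ by interpolation, hence $\mathcal{B}\equiv 0$ and $u\equiv 0$) is sound. This is the natural $q>0$ analogue of the identity the paper itself uses, for $q\le 0$, in the proof of the Corollary to Gazzola--Karageorgis' theorem, where the quantity $u'u''-uu'''-quu'$ is differentiated. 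Note, incidentally, that the boundedness of $u',u'',u'''$ need not be assumed separately: it follows from boundedness of $u$ alone, via the equation and the $L^\infty$-interpolation inequality $\|u''\|_\infty\le C\sqrt{\|u\|_\infty\|u''''\|_\infty}$ that the paper employs in Section 5.2. So what you have genuinely proved is: \emph{every bounded global solution is trivial}. Be aware that the paper offers no proof of this theorem to compare against --- it is quoted verbatim from \cite{fd} --- so your attempt must stand on its own.

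And it does not, because of the step you deferred: the a priori boundedness of global solutions. This is not a technical remainder but the entire difficulty of the theorem, and your proposed route to it fails. The conserved Hamiltonian \eqref{he} is strongly indefinite: the terms $u'u'''-\tfrac12|u''|^2$ carry no sign and are in no way dominated by $F(u)$, so conservation of $E_u$ places no bound whatsoever on $\|u\|_\infty$; its level sets are unbounded in the phase space, and no soft coercivity argument extracts boundedness from it. Excluding unbounded global solutions is precisely the content of the finite-space blow-up analysis in \cite{fd} (which shows every nontrivial solution blows up, with \emph{no} energy threshold, via delicate nonlinear differential inequalities for quantities built from $\mathcal{B}$), while the earlier result \cite{rtt} only covers solutions with $E_u>E_0$ and so cannot close the gap for you. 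A telling symptom: your argument uses the hypothesis \eqref{h0} only to infer $F'(0)=0$ and $F'(t)t\ge 0$, whereas the full two-sided growth condition with exponents $1\le r<p$ must enter somewhere --- it enters exactly in the blow-up step you left unproved. As written, the proposal establishes the easy half of the statement and replaces the hard half with an appeal to an energy mechanism that does not exist for this equation.
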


In order to investigate the optimality of the hypotheses in the previous result, we prove the following existence theorem. Notice that the main assumption is one-sided (much in the spirit of \eqref{onesided}) and can be required to hold at $+\infty$ instead.

\begin{theorem}
\label{th2}
Let $ F\in C^2$ satisfy  $0=F(0)=\min_\R F$ and 
\[
6\limsup_{t \to -\infty} \frac{F(t)}{t^2}< F''(0).
\]
For almost every $ q>0$ such that
\[
24\limsup_{t \to -\infty} \frac{F(t)}{t^2}< q^2<  4F''(0),
\]
 there exists a nontrivial periodic solution to \eqref{SH}.
\end{theorem}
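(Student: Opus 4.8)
The plan is to realize the periodic solution as a mountain-pass critical point of the natural action, using the monotonicity trick of Struwe and Jeanjean to bypass the lack of compactness — which is exactly what forces the ``almost every'' in the statement. I would first fix the period by setting $\omega=2\pi/T$: a $T$-periodic $u$ solves \eqref{SH} if and only if it is a critical point on $H:=H^2_{\mathrm{per}}(0,T)$ of
\[
J_q(u)=\int_0^T\Big(\tfrac12|u''|^2-\tfrac q2|u'|^2+F(u)\Big)\,dx .
\]
I would choose $\omega^2$ close to $q/2$, so that the fundamental mode $\cos(\omega x)$ sits in the (two-dimensional) subspace on which the quadratic part $\int(\tfrac12|u''|^2-\tfrac q2|u'|^2)$ is negative.

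\emph{Mountain-pass geometry.} Near $0$ the relevant quadratic form is $\sum_k\tfrac12\,g(\omega_k^2)|c_k|^2$ with $g(s)=s^2-qs+F''(0)$ and $\omega_k=k\omega$; since $q^2<4F''(0)$ the discriminant of $g$ is negative, whence $g(s)\ge F''(0)-q^2/4>0$ and the Hessian of $J_q$ at $0$ is coercive on $H$. As $F(u)=\tfrac12F''(0)u^2+o(u^2)$ and $H\hookrightarrow C^0$, this gives $\rho,\alpha>0$ with $J_q(u)\ge\alpha$ on $\|u\|_H=\rho$, while $J_q(0)=0$. For the descending direction I test with $u_A:=A(\cos(\omega x)-1)\le 0$, $A>0$; a direct computation yields
\[
J_q(u_A)\le A^2T\Big(\tfrac{\omega^2(\omega^2-q)}{4}+\tfrac32(L+\eps)\Big)+C_\eps,\qquad L:=\limsup_{t\to-\infty}\frac{F(t)}{t^2},
\]
using only the one-sided bound $F(t)\le(L+\eps)t^2+C_\eps$ for $t\le0$ — this is precisely where the sign $u_A\le0$ makes the hypothesis at $-\infty$ sufficient. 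Optimising over $\omega^2\approx q/2$ sends $\omega^2(\omega^2-q)/4\to-q^2/16$, so the bracket is negative exactly when $q^2>24L$; hence $J_q(u_A)\to-\infty$ and there is $e=u_{A_0}$ with $\|e\|_H>\rho$ and $J_q(e)<0$. All of this is uniform for $q$ in a compact subinterval of $(\sqrt{24L},2\sqrt{F''(0)})$, so the min-max level $c_q>0$ is finite and non-increasing in $q$.

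\emph{Compactness and conclusion.} Writing $J_q=A(\cdot)-qB(\cdot)$ with $B(u)=\tfrac12\int_0^T|u'|^2\ge0$, the monotonicity trick applies: $q\mapsto c_q$ is monotone, hence differentiable a.e., and at each point of differentiability $J_q$ admits a Palais--Smale sequence $u_n$ at level $c_q$ with $B(u_n)$ bounded (controlled by the difference quotient of $c_q$). Since $A(u_n)=J_q(u_n)+qB(u_n)\le c_q+o(1)+qB(u_n)$ and $F\ge0$, this bounds $\int|u_n'|^2$ and $\int|u_n''|^2$, so $u_n-\bar u_n$ is bounded in $H$, where $\bar u_n$ is the mean. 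Because $H^2(0,T)\hookrightarrow C^1$ compactly, I could then pass to the limit once the means $\bar u_n$ are controlled, and a bootstrap turns the weak limit into a smooth $T$-periodic solution of \eqref{SH} with $J_q=c_q>0$, hence $u\not\equiv0$; a short separate check (e.g.\ $\int|u'|^2>0$, or excluding the constants $c$ with $F'(c)=0$ at level $c_q$) gives non-constancy.

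The main obstacle is exactly the control of the means: the hypotheses say \emph{nothing} about $F$ at $+\infty$, so a priori a Palais--Smale sequence can escape with $\bar u_n\to\pm\infty$ and $u_n-\bar u_n\to0$, in which case $J_q(u_n)\to T\liminf_{|t|\to\infty}F(t)$. I expect the crux of the argument to be a level estimate excluding this: one must show that $c_q$ stays strictly below the energy $T\,\liminf_{|t|\to\infty}F(t)$ of such a constant-like escaping sequence (automatic when $F$ is coercive, and otherwise to be extracted from the explicit upper bound on $c_q$ furnished by the ray $s\mapsto s\,e$). The two delicate points are therefore the uniformity of the mountain-pass geometry in $q$ and this threshold estimate ruling out escaping Palais--Smale sequences.
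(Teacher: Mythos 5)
Your construction retraces the paper's own proof almost step for step: the same functional $J_q$, the same mountain-pass geometry made uniform on compact subintervals $[a,b]\subseteq\ ]\sqrt{24 L},2\sqrt{F''(0)}[$, and the same Struwe--Jeanjean monotonicity trick with the splitting $J_q=A-qB$, $B(u)=\frac12\|u'\|_2^2$, which is indeed the source of the ``almost every $q$''. Both explicit computations are correct and reproduce the paper's Lemma \ref{coer} and Lemma \ref{negaz}: your Fourier argument via $g(s)=s^2-qs+F''(0)>0$ for $q^2<4F''(0)$ replaces the paper's integration-by-parts interpolation $\|u'\|_2^2\le\|u\|_2\|u''\|_2$, and your test function $A(\cos(\omega x)-1)$ with $\omega^2\approx q/2$ produces the bracket $-q^2/16+\frac32(L+\eps)$, i.e.\ exactly the threshold $q^2>24L$; the paper uses $\mu(\cos(\pi x/T)-\theta)$ with $\theta>1$ and lets $\theta\to1$, fixing $T$ by $\pi^2/T^2=a/2$ and exploiting $J_q\le J_a$ for $q\ge a$ to get the uniformity you invoke. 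Working in $H^2_{\rm per}(0,T)$ rather than in the paper's $H_T$ with even reflection (Proposition \ref{prop}) is immaterial.

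The divergence --- and the genuine gap --- is at the end. You leave the passage from the Palais--Smale sequence to a critical point conditional on an unproven level estimate $c_q<T\liminf_{|t|\to\infty}F(t)$, and this cannot be the right mechanism: the hypotheses $0=F(0)=\min_\R F$ together with the condition at $-\infty$ allow, e.g., $F(t)\to0$ as $t\to+\infty$, in which case $T\liminf_{|t|\to\infty}F(t)=0<c_q$ and your strict inequality can never hold, while the theorem still asserts existence; moreover, even where $\liminf F>0$, making the escaping scenario quantitative requires controlling $\int F'(u_n)(u_n-\bar u_n)$ along the escape, i.e.\ information on $F'$ at infinity which is not assumed. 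The paper argues differently: it takes from the monotonicity trick a Palais--Smale sequence \emph{bounded in the full $H_T$-norm} (the trick bounds $B(u_n)$ by the difference quotient of $c_q$ and then $A(u_n)=\frac12\|u_n''\|_2^2+\int_0^T F(u_n)\,dx$ by $c_q+qB(u_n)$), and concludes by compactness of $H_T\hookrightarrow C^1$, ``reasoning as in \cite{smets}'', with nontriviality coming from $c_{q,T}>0$ and Proposition \ref{prop}. Your instinct that the means $\bar u_n$ are the delicate point is not unreasonable --- with a non-coercive $F$, boundedness of $\int F(u_n)$ alone does not control $\bar u_n$, and this is exactly the step the paper compresses into the citation --- but as written your proposal neither carries out the paper's route nor repairs it with your own, and the repair you sketch is demonstrably unavailable under the stated hypotheses. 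To complete the argument you should derive full-norm boundedness of the Palais--Smale sequence within the monotonicity-trick construction itself, as in \cite{smets}, rather than through a min-max level comparison.
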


The number $6$ in the statement is probably not optimal, however it allows the construction of an example showing that condition \eqref{h1} is essential for non-existence.

\begin{example}
We claim that there exists $F\in C^2$ such that $F''(0)>0$ and \eqref{h0} holds, with \eqref{SH} having a nontrivial periodic (thus {\em bounded}) solution, for almost every $q>0$ such that $q^2< 4 F''(0)$.

Indeed, one can easily construct $F\in C^2$ such that 
\begin{enumerate}
\item
$F''(0)>0=F(0)$, 
\item
$F'(t)t>0$, for all $t\neq 0$,
\item
it holds
\[
\limsup_{t \to -\infty} \frac{F(t)}{t^2}=0.
\]
\end{enumerate}
Then, Theorem \ref{th2} provides a periodic solution $u$ for almost every $q\in [0, 2\sqrt{F''(0)}]$. Using Taylor's formula,  \eqref{h0} holds with $1=r<p=2$ and some $a, b>0$, for $t$ in a {\em bounded} open neighborhood $U$ of $\overline{u(\R)}$. Moreover, we can modify $F$ outside of $\overline{u(\R)}$ so that \eqref{h0} holds anywhere. 
\end{example}

Still in \cite{fd}, the r\^{o}le of the condition $q<2\sqrt{F''(0)}$ is also discussed, through the following partial converse of Theorem \ref{fd}.

\begin{theorem}[Theorem 2 in \cite{fd}]
Suppose $F\in C^2$ is even,  satisfies \eqref{h0}, \eqref{h1} and the limit $ \lim_{t\to +\infty}\frac{F'(t)}{t^p}$ exists.
Then, for every $q>0$ such that $q^2>4F''(0)$, there exists a nontrivial periodic solution to \eqref{SH}.
\end{theorem}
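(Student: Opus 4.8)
My plan is to produce the periodic solution by \emph{direct minimization} of the natural action functional on a space of periodic functions of a carefully chosen period, using the fact that $q^2>4F''(0)$ is exactly the threshold past which the quadratic part of the action becomes indefinite. Equation \eqref{SH} is the Euler--Lagrange equation of
\[
J_T(u)=\int_0^T\Big(\tfrac12|u''|^2-\tfrac q2|u'|^2+F(u)\Big)\,dx
\]
on $H^2_{\mathrm{per}}(0,T)$. Expanding in Fourier modes $e^{2\pi i k x/T}$, the quadratic part of $J_T$ at $0$ is diagonalized with symbol $\sigma(\omega)=\omega^4-q\omega^2+F''(0)$ evaluated at $\omega=2\pi k/T$ (here $F(0)=0$ and $F'(0)=0$, which follow from \eqref{h0} and evenness). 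For $q>0$ the symbol $\sigma$ is negative on a nonempty open frequency interval $(\omega_-^2,\omega_+^2)$, with $\omega_\pm^2=(q\pm\sqrt{q^2-4F''(0)})/2$, \emph{precisely when} $q^2>4F''(0)$. I would therefore fix $T$ so that the fundamental frequency satisfies $\sigma(2\pi/T)<0$. This is the only place the hypothesis enters, which is exactly why existence can be obtained for \emph{every} admissible $q$, in contrast with the a.e.\ statement of Theorem~\ref{th2}, where $F$ is only subquadratic and the direct method is unavailable.

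Next I would run the direct method. The lower bound in \eqref{h0} gives $F(t)\ge \frac{a}{p+1}|t|^{p+1}\ge 0$ with $p>1$, so $F$ is nonnegative and superquadratic. The sole obstruction to coercivity is the indefinite term $-\frac q2\int|u'|^2$, which I would absorb through the interpolation $\int|u'|^2\le \eps\int|u''|^2+C_\eps\int|u|^2$ followed by a Young inequality dominating $\int|u|^2$ by $\int|u|^{p+1}$. This yields a bound of the shape $J_T(u)\ge \frac14\int|u''|^2+c\int|u|^{p+1}-C$, hence coercivity on $H^2_{\mathrm{per}}(0,T)$. Weak lower semicontinuity is standard: $\int|u''|^2$ is weakly l.s.c., while $\int|u'|^2$ and $\int F(u)$ are weakly continuous by the compact embeddings $H^2_{\mathrm{per}}\hookrightarrow H^1_{\mathrm{per}}$ and $H^2_{\mathrm{per}}\hookrightarrow C^0$ on the bounded interval. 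Thus $J_T$ attains its infimum at some $u_T$.

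It remains to check that the minimizer is nontrivial and non-constant. Testing on $\phi_\delta=\delta\cos(2\pi x/T)$ and using $F(t)=\frac{F''(0)}2t^2+o(t^2)$, I compute
\[
J_T(\phi_\delta)=\frac{T\delta^2}{4}\,\sigma\!\left(\tfrac{2\pi}{T}\right)+o(\delta^2)<0
\]
for $\delta$ small, since $\sigma(2\pi/T)<0$. Hence $\min J_T\le J_T(\phi_\delta)<0=J_T(0)$, so $u_T\neq 0$; moreover every nonzero constant $c$ gives $J_T(c)=T\,F(c)>0$, so $u_T$ is non-constant. Finally, rewriting \eqref{SH} as $u''''=-qu''-F'(u)$ with $u_T\in C^1$ and bootstrapping promotes $u_T$ to a classical $T$-periodic solution.

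I expect the coercivity step to be the main obstacle: because the quadratic form is indefinite---the very feature that creates the solution---coercivity cannot be read off directly, and the negative $-\frac q2\int|u'|^2$ contribution must be absorbed against the superquadratic potential; once this interpolation is carried out, the rest is routine. I note that this route appears to need only \eqref{h0} together with the value $F''(0)$ in the threshold, and not \eqref{h1} nor the existence of $\lim_{t\to+\infty}F'(t)/t^p$; these are presumably required for the alternative (mountain-pass or bifurcation) argument used in \cite{fd}.
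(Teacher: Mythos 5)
Your proof is correct and follows essentially the same route as the paper, which establishes this statement via its generalization Theorem \ref{th3} (proved as Theorem \ref{perex}): direct minimization of the action over a period chosen so that a small cosine test function makes the infimum negative --- exactly where $q^2>4F''(0)$ enters --- followed by weak sequential lower semicontinuity and the observation that a negative-energy minimizer is nontrivial and non-constant. The remaining differences are cosmetic or simplifying: the paper minimizes on $H_T$ and recovers periodicity by even reflection (Proposition \ref{prop}) rather than working on $H^2_{\mathrm{per}}$ directly, and its boundedness-from-below step (Lemma \ref{boundb}) is calibrated to the weaker quadratic growth condition $q^2<2\liminf_{|t|\to+\infty}F(t)/t^2$, whereas you exploit the superquadratic bound $F(t)\ge \tfrac{a}{p+1}|t|^{p+1}$ coming from \eqref{h0} to obtain outright coercivity; your closing remark that evenness, \eqref{h1} and the existence of $\lim_{t\to+\infty}F'(t)/t^p$ are superfluous is precisely the content of the paper's generalization.
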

Notice that, being $p>1$, \eqref{h0} forces
\[
 \lim_{t\to +\infty}\frac{F'(t)}{t^p}=B>0,
 \]
which implies the (much weaker) condition 
 \[
 \liminf_{|t|\to +\infty}\frac{F(t)}{t^2}=+\infty.
 \]
We generalize the previous Theorem as follows.

\begin{theorem}
\label{th3}
Let $F\in C^2$ satisfy  $0=F(0)=\min_\R F$. For any $q>0$ such that
\[
4F''(0)<q^2<2\liminf_{|t|\to +\infty}\frac{F(t)}{t^2},
\]
there exists a nontrivial periodic solution to \eqref{SH}.
\end{theorem}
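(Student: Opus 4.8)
The plan is to obtain the periodic solution variationally, as a \emph{minimizer} of the action functional associated to \eqref{SH} on a space of periodic functions with a conveniently chosen period. Fix $T>0$ and let $H:=H^2_{\mathrm{per}}(0,T)$ be the space of $T$-periodic $H^2$ functions; on $H$ consider
\[
J(u)=\int_0^T\Big(\frac{|u''|^2}{2}-\frac{q}{2}\,|u'|^2+F(u)\Big)\,dx .
\]
A direct integration by parts shows that the Euler--Lagrange equation of $J$, together with the periodic boundary conditions encoded in $H$, is exactly \eqref{SH}, so any critical point of $J$ yields a $T$-periodic solution. Since $F\in C^2$ and $0=F(0)=\min_\R F$ force $F'(0)=0$ and $F''(0)\ge 0$, the constant $u\equiv 0$ is the trivial critical point with $J(0)=0$, and I only need to produce a critical point at a strictly negative level: it will automatically be \emph{nonconstant}, because $J(c)=T\,F(c)\ge 0$ for every constant $c$. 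I will take the period to be $T=2\pi\sqrt{2/q}$, so that the first nonzero frequency $\omega=2\pi/T$ satisfies $\omega^2=q/2$.

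\emph{Coercivity and existence of a minimizer.} This is where the upper bound on $q$ enters. Pick $\mu$ with $q^2/2<\mu<\liminf_{|t|\to\infty}F(t)/t^2$; since $F\ge 0$, there is $C\ge 0$ with $F(t)\ge \mu t^2-C$ for all $t\in\R$. Expanding $u=\sum_n c_n e^{i\omega n x}$ and using Parseval,
\[
J(u)\ge \frac{T}{2}\sum_{n\in\mathbb{Z}}\big((\omega n)^4-q(\omega n)^2+2\mu\big)|c_n|^2-CT .
\]
The symbol $s\mapsto s^2-qs+2\mu$ has minimum $2\mu-q^2/4>0$ on $s\ge 0$ and grows quadratically, hence is bounded below by $c_0(1+s^2)$ for some $c_0>0$; this gives $J(u)\ge c_0'\|u\|_{H^2}^2-CT$, so $J$ is coercive on $H$. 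Moreover $J$ is weakly lower semicontinuous: on a weakly convergent sequence $u_k\rightharpoonup u$ the compact embedding $H\hookrightarrow C^1([0,T])$ yields uniform convergence, whence $\int_0^T F(u_k)\to\int_0^T F(u)$ and $\int_0^T|u_k'|^2\to\int_0^T|u'|^2$, while $\int_0^T|u''|^2$ is weakly lower semicontinuous. The direct method then produces a minimizer $u^*\in H$.

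\emph{Nontriviality and regularity.} Here the lower bound $4F''(0)<q^2$ is used. Testing with $u_a(x)=a\cos(\omega x)$ and expanding via $F(s)=\tfrac12 F''(0)s^2+o(s^2)$ together with $\omega^4-q\omega^2=-q^2/4$, one gets
\[
J(u_a)=\frac{a^2 T}{4}\Big(F''(0)-\frac{q^2}{4}\Big)+o(a^2)\qquad(a\to 0),
\]
whose leading coefficient is negative. Thus $\inf_H J\le J(u_a)<0$ for small $a$, so $u^*$ is neither $0$ nor any other constant. Finally $u^*\in H\hookrightarrow C^1$ makes $F'(u^*)$ continuous, and a standard bootstrap in the weak formulation upgrades $u^*$ to a classical $C^4$ solution of \eqref{SH}, $T$-periodic, hence a genuine nontrivial periodic solution on $\R$.

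The main obstacle is the coercivity step, since the quadratic part of $J$ is indefinite: it is negative precisely on the finitely many low frequencies, so coercivity cannot be read off the principal part and must be recovered from the growth of $F$ encoded in $q^2<2\liminf_{|t|\to\infty}F(t)/t^2$. The delicate point is to choose the period so that $J$ simultaneously (i) stays coercive and (ii) drops strictly below $J(0)=0$ along some admissible frequency (which requires $4F''(0)<q^2$); the choice $\omega^2=q/2$ achieves both at once. A minor but genuine subtlety is the degeneracy of the constant Fourier mode, on which the quadratic form vanishes — this is harmless only because $F\ge 0$ forces $J\ge 0$ along constants.
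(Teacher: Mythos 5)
Your proof is correct, and it shares the paper's overall architecture --- direct minimization of the same functional $J_q$ at a strictly negative level, with nontriviality forced by a small-amplitude cosine competitor (the paper likewise tests with $\lambda\cos(\pi x/T)$ and uses $q^2>4F''(0)$ to open a window of admissible frequencies $z=\pi^2/T^2$; your choice $\omega^2=q/2$ is exactly the midpoint of that window). The genuinely different step is the compactness mechanism, together with the function space. The paper works on $H_T=\{u\in H^2:u'\in H^1_0\}$ and recovers periodicity by even reflection (Proposition \ref{prop}); it never proves coercivity, only boundedness of the sublevel set $\{J_q\le 0\}$, via the interpolation inequality \eqref{osc}, $\|u'\|_2^2\le\|u\|_2\|u''\|_2$, combined with the quadratic growth of $F$ (Lemma \ref{boundb}), and then concludes by weak compactness plus the sequential weak lower semicontinuity of Lemma \ref{lsc}. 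You instead work on $H^2_{\mathrm{per}}$ and get genuine coercivity from Parseval and positivity of the symbol $s^2-qs+2\mu$ on $s\ge 0$. Your route is cleaner and in fact stronger than you claim: positivity of the symbol only requires $2\mu-q^2/4>0$, i.e.\ $\mu>q^2/8$, so your argument as written establishes the theorem in the wider range $4F''(0)<q^2<8\liminf_{|t|\to\infty}F(t)/t^2$ --- the restriction $\mu>q^2/2$ that you impose is never used at full strength, whereas the paper's interpolation argument genuinely needs $q^2<2\liminf_{|t|\to\infty}F(t)/t^2$. Two minor remarks: in the periodic setting one should observe that the natural boundary conditions in the weak formulation force periodicity of $u''$ and $u'''$ as well, so that the periodic extension solves \eqref{SH} on all of $\R$ (your ``standard bootstrap'' covers this, but it is worth a line); and your closing comment about the degeneracy of the constant Fourier mode is moot in your own estimate, since after absorbing $F(t)\ge\mu t^2-C$ the zero mode carries the positive weight $2\mu$, and nontriviality of the minimizer needs only $J(c)=TF(c)\ge 0$ for constants, which you already use.
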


\subsection{Asymptotic behavior}

In light of the previous discussion, under assumption \eqref{QC} it only makes sense to consider the asymptotic behavior of the solutions to \eqref{SH}, for $q\downarrow 0$. The starting point is a result proved by Lazer and McKenna.

\begin{theorem}[Theorem 2 in \cite{lm2}]
Let, for some $q_n\downarrow 0$, $\{u_n\}_n$ be a sequence of bounded nontrivial solutions to 
\[
u''''+q_nu''+ (1+u)_+-1=0,
\]
where $a_+=\max\{0, a\}$, for any $a\in \R$. Then, $\|u_n\|_\infty\to +\infty$.
\end{theorem}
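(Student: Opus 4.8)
The plan is to argue by contradiction: assuming $\|u_n\|_\infty$ does not diverge, along a subsequence we have $\|u_n\|_\infty\le M$ for some $M>0$. I would then extract a locally uniform limit $v$ of suitable translates of the $u_n$, observe that (since $q_n\to 0$) $v$ solves \eqref{SH} with $q=0$, and invoke the non-existence result of \cite{ms} recalled above to force $v\equiv 0$, contradicting an amplitude lower bound established beforehand. Throughout, write $f(t)=(1+t)_+-1$, so that $f(t)=t$ for $t\ge -1$ and $f(t)=-1$ for $t<-1$; the associated potential $F$ is coercive, satisfies $F(0)=0=\min_\R F$ and $F'(t)\,t\ge 0$ for every $t$, so that \eqref{QC} holds.

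First I would prove the amplitude lower bound $\|u_n\|_\infty\ge 1$, valid for all $n$ large enough that $q_n<2$. Indeed, if $\|u_n\|_\infty<1$ then $u_n(x)>-1$ for all $x$, whence $f(u_n)=u_n$ and $u_n$ solves the \emph{linear} equation $u_n''''+q_nu_n''+u_n=0$. Its characteristic polynomial $\lambda^4+q_n\lambda^2+1$ has roots $\lambda^2=\tfrac12(-q_n\pm i\sqrt{4-q_n^2})$ for $0<q_n<2$, all of modulus $1$ and lying off the real axis; consequently the four values of $\lambda$ all have nonzero real part, so the only solution bounded on all of $\R$ is the trivial one. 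Since $u_n\not\equiv 0$, this forces $\|u_n\|_\infty\ge 1$, and in particular I may pick $x_n\in\R$ with $|u_n(x_n)|\ge 1-1/n$.

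The technical core, which I expect to be the main obstacle, is to obtain bounds on $\|u_n'\|_\infty,\|u_n''\|_\infty,\|u_n'''\|_\infty$ that are uniform in $n$, depending only on $M$. For this I would use the Landau--Kolmogorov interpolation inequalities on unit intervals, in the form $\|w''\|_{L^\infty(I)}\le C\big(\|w\|_{L^\infty(I)}+\|w''''\|_{L^\infty(I)}\big)$ for $|I|=1$, together with the equation written as $u_n''''=-q_nu_n''-f(u_n)$ and the bound $|f(u_n)|\le C_M$. Inserting the latter gives $\|u_n''\|_{L^\infty(I)}\le CM+C\,q_n\|u_n''\|_{L^\infty(I)}+C\,C_M$, and since $\|u_n''\|_{L^\infty(I)}$ is finite (as $u_n\in C^4$) the middle term can be absorbed on the left once $q_n$ is small, yielding a bound on $\|u_n''\|_{L^\infty(\R)}$ uniform in $I$ and $n$; the bounds on $u_n'''$, $u_n'$ and $u_n''''$ then follow by the same interpolation and the equation. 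With these uniform $C^3$ bounds (and $u_n'''$ uniformly Lipschitz), Arzel\`a--Ascoli applies to the translates $v_n(\cdot):=u_n(\cdot+x_n)$.

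Finally I would pass to the limit. Up to a subsequence, $v_n\to v$ in $C^3_{\mathrm{loc}}(\R)$, whence $v_n''''=-q_nv_n''-f(v_n)\to -f(v)$ locally uniformly (using $q_n\to 0$, the uniform bound on $v_n''$, and the continuity of $f$); thus $v\in C^4$ solves $v''''+f(v)=0$, is bounded with $\|v\|_\infty\le M$, and so satisfies \eqref{SH} with $q=0$. Since \eqref{QC} holds for $F$, \cite[Theorem 3.1]{ms} forces $v$ to be constant, and the only constant $c$ with $f(c)=0$ is $c=0$, so $v\equiv 0$. On the other hand $|v(0)|=\lim_n|v_n(0)|=\lim_n|u_n(x_n)|\ge 1$, a contradiction. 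Therefore no bounded subsequence exists and $\|u_n\|_\infty\to+\infty$.
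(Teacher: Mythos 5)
Your argument is correct, but note that the paper itself gives no proof of this statement: it is quoted from \cite{lm2} as motivation for the asymptotic results of Section 5, so the natural comparison is with the paper's own proof of the closely analogous Theorem \ref{th5b}. There the authors run exactly the compactness-plus-Liouville scheme you propose, namely $L^\infty$ interpolation bounds extracted from the equation, Ascoli--Arzel\`a compactness in $C^3_{\rm loc}(\R)$, passage to the limit equation with $q=0$, and the Liouville theorem \cite[Theorem 3.1]{ms}, and, like you, they rule out a trivial limit by centering the translates at points where the solution is quantitatively large. The genuinely different (and problem-specific) ingredient in your proof is the amplitude lower bound $\|u_n\|_\infty\ge 1$: since $f(t)=(1+t)_+-1$ is exactly linear on $[-1,+\infty[$, a solution with $\|u_n\|_\infty<1$ solves the constant-coefficient equation $u''''+q_nu''+u=0$, whose characteristic roots for $0<q_n<2$ all have nonzero real part, so its only bounded global solution is $0$; this replaces the oscillation-normalizing scaling used for Theorem \ref{th5b} (unavailable here, as $f$ is not homogeneous), and it is precisely where the smallness of $q_n$, i.e. $q_n^2<4F''(0)=4$, enters --- the mechanism that the generalization \cite[Theorem 3.2]{ms} encodes in the hypothesis $F''(0)>0$. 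Your uniform $C^4$ bounds are also handled correctly: you verify finiteness of $\|u_n''\|_{L^\infty(I)}$ on unit intervals before absorbing the $q_n$-term, which is the standard pitfall in such absorption arguments. One pedantic point worth flagging: for this nonlinearity $F$ is only $C^{1,1}$ (indeed $F''$ jumps at $t=-1$), whereas \cite[Theorem 3.1]{ms} is recalled in the paper under a $C^2$ assumption on the potential; the Liouville result does hold for merely continuous $f$ satisfying $f(t)t\ge 0$, since its proof is an integral estimate that never differentiates $f$, but a sentence acknowledging this regularity mismatch would make the citation airtight.
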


We briefly say that {\em the nontrivial solutions to \eqref{SH}  are unbounded, as $q\downarrow 0$}, if the thesis of the previous theorem holds for any $q_n\downarrow 0$ and corresponding nontrivial solutions $\{u_n\}_n$ to \eqref{SH}.
The previous result has later been generalized as follows.

\begin{theorem}[Theorem 3.2 in \cite{ms}]
Let $F\in C^2$ satisfy $F''(0)>0$, ${\rm int}(\{F'=0\})=\emptyset$ and \eqref{QC}. Then, the nontrivial solutions to \eqref{SH} are unbounded, as $q\downarrow 0$.
\end{theorem}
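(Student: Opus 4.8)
The plan is to argue by contradiction. Suppose the conclusion fails, so there are $q_n\downarrow 0$ and nonconstant solutions $u_n$ of \eqref{SH} (with $q=q_n$) satisfying $\sup_n\|u_n\|_\infty=:M<+\infty$. Since $u_n$ ranges in $[-M,M]$ and $F\in C^2$, the term $F'(u_n)$ is bounded by $\max_{[-M,M]}|F'|$, uniformly in $n$; writing \eqref{SH} as $u_n''''=-q_nu_n''-F'(u_n)$ and applying interior estimates for this linear fourth order ODE on unit intervals, one gets bounds $\|u_n^{(k)}\|_{L^\infty(\R)}\le C$ for $k\le 4$ (in fact $k\le 5$, since $F'(u_n)$ is Lipschitz in $x$), with $C$ independent of $n$. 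The goal is to exploit these uniform bounds to extract, after a suitable translation and possibly a rescaling, a nonconstant bounded limit which solves the EFK equation ($q=0$); this contradicts the rigidity theorem for $q\le 0$ stated above (Theorems 3.1 and 5.1 in \cite{ms}).

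First I would dichotomize according to the size of $\|u_n'\|_\infty$. If $\limsup_n\|u_n'\|_\infty>0$, I pass to a subsequence and choose $x_n$ with $|u_n'(x_n)|\ge\eta>0$; the translates $\tilde u_n:=u_n(\cdot+x_n)$ inherit the uniform $C^4(\R)$ bounds, so by Arzel\`a--Ascoli a subsequence converges in $C^3_{loc}$ to some $u_\infty$ with $\|u_\infty\|_\infty\le M$. Passing to the limit in \eqref{SH} (the term $q_n\tilde u_n''\to 0$ since $q_n\to0$ and $\tilde u_n''$ is bounded, while $F'(\tilde u_n)\to F'(u_\infty)$ locally uniformly) shows $u_\infty''''+F'(u_\infty)=0$, i.e.\ $u_\infty$ solves the EFK equation. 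Since $|u_\infty'(0)|=\lim_n|u_n'(x_n)|\ge\eta>0$, the limit is nonconstant, contradicting the rigidity theorem.

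The substantial case is $\|u_n'\|_\infty\to0$. Here I would first bootstrap: by the Landau--Kolmogorov interpolation inequality $\|f'\|_\infty\le C\|f\|_\infty^{1/2}\|f''\|_\infty^{1/2}$ on $\R$, applied successively to $f=u_n',u_n'',u_n'''$ (whose second derivatives are uniformly bounded by the a priori estimates), one obtains $\|u_n^{(k)}\|_\infty\to0$ for $k=1,2,3,4$. Consequently $F'(u_n)=-u_n''''-q_nu_n''\to0$ uniformly, so that, writing $u_n(\R)=[\alpha_n,\beta_n]$ (an interval, by continuity), $\max_{[\alpha_n,\beta_n]}|F'|\to0$. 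If $\liminf_n(\beta_n-\alpha_n)>0$, then along a subsequence $[\alpha_n,\beta_n]\to[\alpha,\beta]$ with $\beta>\alpha$ and $F'\equiv0$ on $(\alpha,\beta)$, contradicting ${\rm int}(\{F'=0\})=\emptyset$. Otherwise $\beta_n-\alpha_n\to0$, so $u_n$ collapses uniformly to a constant $c$ with $F'(c)=0$. When $c=0$, I rescale by $v_n:=u_n/\|u_n\|_\infty$ (translated so that $|v_n|$ is nearly maximized at $0$): using $F'(0)=0$ (immediate from \eqref{QC}) and $F''(0)>0$, the quotient $F'(\|u_n\|_\infty v_n)/\|u_n\|_\infty\to F''(0)\,v_\infty$, and the limit solves $v_\infty''''+F''(0)v_\infty=0$; since $F''(0)>0$ the characteristic roots $\lambda$ satisfy $\lambda^4=-F''(0)$ and all have nonzero real part, so the only bounded solution on $\R$ is $v_\infty\equiv0$, contradicting $|v_\infty(0)|=1$.

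The main obstacle is the remaining case $c\neq0$. Here \eqref{QC} forces $F''(c)=0$ (indeed $F'$ has a one-sided sign near $c$ and vanishes at $c$, so $c$ is a critical point of $F'$), and the linearization $v''''+q v''=0$ at $c$ has purely imaginary nonzero roots $\pm i\sqrt q$, so the natural oscillation wavelength is of order $q^{-1/2}\to+\infty$: the variation of $u_n$ spreads out, and a pure amplitude rescaling produces only a constant limit. To capture it I expect to need a combined space--amplitude blow-up $v_n(y)=(u_n(x_n+\lambda_n y)-c)/b_n$ with $\lambda_n\sim q_n^{-1/2}$ and $b_n$ chosen to normalize the variation, leading to a limiting fourth order equation of the form $v''''+v''+g(v)=0$; the contradiction should again come either from a rigidity statement (no nonconstant bounded solution) or, via ${\rm int}(\{F'=0\})=\emptyset$, by forcing $F'$ to vanish on a nondegenerate interval. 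Controlling the limiting nonlinearity $g$, given only that $F$ is $C^2$ with a degenerate critical point at $c$, is where the analysis is most delicate.
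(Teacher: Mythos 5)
This theorem is quoted in the paper from \cite{ms} without proof, so your argument can only be judged on its own terms. Most of it is sound: the uniform $C^4$ bounds, the translation--compactness step when $\limsup_n\|u_n'\|_\infty>0$ (a bounded nonconstant limit solving the EFK equation with $q=0$, killed by the rigidity theorem for $q\le 0$), the interpolation bootstrap giving $F'(u_n)\to 0$ uniformly, the subcase where the ranges $[\alpha_n,\beta_n]$ stay nondegenerate (forcing $F'\equiv 0$ on an interval, against ${\rm int}(\{F'=0\})=\emptyset$), and the amplitude rescaling when $u_n\to 0$, where $F''(0)>0$ enters correctly through the Liouville property of $v''''+F''(0)v=0$ (all characteristic roots of $\lambda^4=-F''(0)$ have nonzero real part). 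The genuine gap is the one you flag yourself: the collapse $u_n\to c\neq 0$ onto a degenerate zero of $F'$, which you leave open after proposing a space--amplitude blow-up with an uncontrolled limiting nonlinearity.

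That last case, however, needs no blow-up at all: it cannot occur, by a Liouville trick used verbatim elsewhere in this very paper (Section 5.2, see the display \eqref{pp}). If $c>0$, then for large $n$ the range of $u_n$ is contained in $]0,+\infty[$, so \eqref{QC} gives $F'(u_n(x))\ge 0$ for every $x\in\R$. Hence $h_n:=u_n''+q_nu_n$ satisfies $h_n''=-F'(u_n)\le 0$, i.e.\ $h_n$ is concave on $\R$; since it is also bounded, by your a priori estimates, it must be constant, whence $F'(u_n(x))\equiv 0$ for all $x$. Thus $F'$ vanishes on the nondegenerate interval $u_n(\R)$ ($u_n$ being nonconstant), contradicting ${\rm int}(\{F'=0\})=\emptyset$; the case $c<0$ is symmetric, with $h_n$ convex. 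In fact this argument shows that \emph{every} nonconstant bounded solution must take values of both signs, so in the collapse scenario necessarily $c=0$, and your rescaling argument then closes the proof. With this single insertion, replacing your proposed blow-up analysis, the proof is complete.
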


The condition ${\rm int}(\{F'=0\})=\emptyset$ is readily seen to be necessary for the thesis (see \cite[Remark 3.2]{ms}). Here we focus on the necessity of the assumption $F''(0)>0$, proving the following.

\begin{theorem}
\label{th4b}
Let $F\in C^2$ satisfy  
\[
{\rm Argmin}(F)=\{0\}, \qquad \liminf_{|t|\to +\infty} \frac{F(t)}{t^2}>0,\qquad F''(0)=0.
\]
Then, for any $q>0$, there is a nontrivial periodic solution $u_q$ to \eqref{SH}, satisfying $\lim_{q\downarrow 0}\|u_{q}\|_\infty=0$.
\end{theorem}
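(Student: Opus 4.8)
The plan is to obtain the solutions as minimizers of the natural action functional after a $q$-dependent rescaling that fixes the period, and to read off the amplitude decay from a compactness argument. Equation \eqref{SH} is the Euler--Lagrange equation of
\[
J_q(u)=\int_0^{T}\Big(\tfrac12|u''|^2-\tfrac q2|u'|^2+F(u)\Big)\,dx
\]
on $T$-periodic functions. The hypotheses give $F(t)=o(t^2)$ as $t\to0$ (since $F(0)=F'(0)=F''(0)=0$ and $0=\min F$), while $L:=\liminf_{|t|\to\infty}F(t)/t^2>0$ yields $F(t)\ge c\,t^2-C$ for every $c<L$. The key device is the substitution $u(x)=v(\sqrt q\,x)$, which turns \eqref{SH} into $q^2(v''''+v'')+F'(v)=0$, the Euler--Lagrange equation of
\[
\tilde J_q(v)=\frac{q^2}2\int_0^{T'}|v''|^2-\frac{q^2}2\int_0^{T'}|v'|^2+\int_0^{T'}F(v),
\]
which I minimize over $H^{2}_{\mathrm{per}}(0,T')$ for a \emph{single fixed} period, say $T'=2\pi\sqrt2$ so that $\cos(s/\sqrt2)$ is admissible. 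Fixing $T'$ is essential: it keeps the compact embedding $H^{2}(0,T')\hookrightarrow C^{1}$ available, which would be lost in the original variable where the period $T=T'/\sqrt q\to\infty$.

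For $c<L$ and $q^2<4c$, the interpolation $\|v'\|_2^2\le\|v''\|_2\|v\|_2$ together with $F(v)\ge c\,v^2-CT'$ makes $\tilde J_q$ coercive on $H^{2}_{\mathrm{per}}$; since it is weakly lower semicontinuous (the term $-\frac{q^2}2\|v'\|^2$ and $v\mapsto\int F(v)$ being weakly continuous by compactness), a minimizer $v_q$ exists. Testing with $a\cos(s/\sqrt2)$ and using $F(t)=o(t^2)$ gives
\[
\tilde J_q\big(a\cos(\cdot/\sqrt2)\big)\le T'a^2\Big(\tfrac{M(a)}{a^2}-\tfrac{q^2}{16}\Big),\qquad M(a)=\max_{|t|\le a}F(t),
\]
which is negative for $a$ small; hence $\inf\tilde J_q<0=\tilde J_q(0)$ and $v_q$ is nonconstant (every constant $c$ has $\tilde J_q(c)=T'F(c)\ge0$). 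By elliptic regularity $v_q\in C^{4}$ solves the rescaled equation, so $u_q(x):=v_q(\sqrt q\,x)$ is a nontrivial $T'/\sqrt q$-periodic solution of \eqref{SH}. Only small $q$ is needed for the asymptotics; existence for the remaining $q$ follows from Theorem \ref{th3} (for $q^2<2L$) and from standard variational arguments for larger $q$, and is irrelevant to the limit.

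The heart of the proof is the amplitude decay. From $\tilde J_q(v_q)<0$, i.e. $\frac{q^2}2\|v_q''\|^2+\int F(v_q)<\frac{q^2}2\|v_q'\|^2$, combined with $\|v_q'\|^2\le\frac12\|v_q''\|^2+\frac12\|v_q\|^2$, I obtain the self-improving inequality
\[
\frac{q^2}4\|v_q''\|_2^2+\int_0^{T'}F(v_q)<\frac{q^2}4\|v_q\|_2^2,
\]
which yields \emph{simultaneously} $\|v_q''\|_2\le\|v_q\|_2$ and $\int F(v_q)<\frac{q^2}4\|v_q\|_2^2$. Inserting $F(v)\ge c\,v^2-CT'$ into the second bound gives $\|v_q\|_2^2<CT'/(c-q^2/4)$, uniformly for small $q$; hence $\|v_q\|_{H^{2}}$ is uniformly bounded and $\int F(v_q)=O(q^2)\to0$. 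By the compact embedding, $v_q\to v_*$ in $C^{1}$ along any subsequence, so $\int F(v_*)=0$; since $F\ge0$ and $\{F=0\}=\{0\}$, we get $v_*\equiv0$. As the limit is subsequence-independent, $\|u_q\|_\infty=\|v_q\|_\infty\to0$.

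I expect the main obstacle to be exactly this passage from ``small energy'' to ``small sup-norm''. For a fourth-order problem there is neither a maximum principle nor a truncation trick to cap the amplitude, and in the original variables the quadratic form $\frac12\|u''\|^2-\frac q2\|u'\|^2$ is indefinite while the period diverges, so no compactness is at hand. The two ideas that overcome this are (i) the rescaling to a fixed period, which restores the compact embedding $H^{2}\hookrightarrow C^{1}$, and (ii) the self-improving estimate above, in which the super-quadratic coercivity of $F$ at infinity (from $\liminf F/t^2>0$) absorbs the indefinite quadratic part and forces $\int F(v_q)\to0$.
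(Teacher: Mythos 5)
Your proposal is correct, and it reaches the conclusion by a genuinely different route than the paper. Both arguments share the same starting points --- negativity of the infimum for $qT^2>\pi^2$ (cf.\ Remark \ref{remT}; your fixed period $T'=2\pi\sqrt2$ corresponds to $qT^2=8\pi^2$, squarely inside the paper's window $\pi^2<qT^2\le K$) and the inequality obtained by combining $J_q(u_q)\le 0$ with $\|u'\|_2^2\le\|u\|_2\|u''\|_2$, of which your ``self-improving inequality'' is exactly the rescaled version of the chain \eqref{<0}--\eqref{osc}. They diverge at the crucial step ``small energy $\Rightarrow$ small sup-norm''. The paper stays in the original variables, where the period $T(q)\asymp q^{-1/2}$ diverges and compactness is unavailable, and compensates with a quantitative device: it replaces $F$ by the convex envelope $H=\bigl(\widetilde F(\sqrt{|\cdot|})\bigr)^*$ of \eqref{defH}, applies Jensen's inequality, and uses the convex-envelope lemma of Section 2 together with Lemma \ref{phi} (guaranteeing that $\varphi(t)=H(t)/t$ is continuous, strictly increasing and vanishing at $0$) to derive the explicit modulus ${\rm Osc}(u_{q,T})^2\le qT^2\varphi^{-1}(q^2/2)$ of \eqref{claim}; since this controls only the oscillation, a separate contradiction argument exploiting $T_n\to\infty$ is then needed to pin down the level $u_q(0)\to 0$. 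Your rescaling $u(x)=v(\sqrt q\,x)$ restores the compact embedding $H^2(0,T')\hookrightarrow C^1$ and lets you run a soft argument instead: uniform $H^2$ bounds plus $\int F(v_q)=O(q^2)$ plus ${\rm Argmin}(F)=\{0\}$ force any $C^1$-limit to vanish, handling oscillation and level in one stroke. What you lose is the explicit decay rate $\varphi^{-1}(q^2/2)$, which is the genuine extra content of the paper's convex-envelope/Jensen machinery; what you gain is a shorter, more elementary proof that bypasses the whole Section 2 apparatus on convex envelopes. Two minor caveats: your coercivity threshold is not sharp but harmless (positive definiteness of $\frac{q^2}{2}a^2-\frac{q^2}{2}ab+cb^2$ in fact holds for all $q^2<8c$, so $q^2<4c$ certainly suffices); and your parenthetical claim that existence for the remaining large $q$ ``follows from standard variational arguments'' is unsupported, since for $q^2\ge 2\liminf_{|t|\to\infty}F(t)/t^2$ the functional may be unbounded below and no existence mechanism is available under the stated hypotheses --- but the paper's own proof has exactly the same restriction (its existence step invokes Theorem \ref{perex}), so this is a shared blemish of the statement ``for any $q>0$'' and is immaterial for the limit $q\downarrow 0$.
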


The solutions found in the previous theorem minimize the corresponding variational energy (not to be confused with a Hamiltonian energy usually associated to \eqref{SH}). We conclude the paper showing that the vanishing (in the sense of the previous theorem) of nontrivial solutions to \eqref{SH} can involve {\em all the nontrivial solutions} rather than just the special ones found above. A scaling argument shows that this phenomenon occurs for homogeneous potentials.

\begin{theorem}
\label{th5b}
Let $r>2$. If $ \{u_n\}_n $ is any sequence of bounded solutions to 
\[
u''''+q_nu''+|u|^{r-2}u=0
\]
for some $q_n\downarrow 0$, then $\|u_n\|_\infty\to 0$.
\end{theorem}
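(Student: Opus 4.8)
The plan is to exploit the scaling invariance of the homogeneous equation to reduce the statement to the non-existence result of Theorem \ref{th1} at $q=0$. First I would record the relevant scaling: if $u$ solves $u''''+qu''+|u|^{r-2}u=0$, then for every $\lambda>0$ the function
\[
u_\lambda(x):=\lambda^{\frac{4}{r-2}}\,u(\lambda x)
\]
solves the same equation with $q$ replaced by $\lambda^2 q$, as a direct substitution shows using $|u_\lambda|^{r-2}u_\lambda=\lambda^{\frac{4(r-1)}{r-2}}|u|^{r-2}u(\lambda\,\cdot)$ and the identity $\frac{4(r-1)}{r-2}=\frac{4}{r-2}+4$. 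Note that the potential $F(t)=\frac1r|t|^r$ has $F'(t)=|t|^{r-2}t$, so that \eqref{QC} holds and \eqref{liminf0} holds with $\eps=r-2>0$; hence Theorem \ref{th1} applies at $q=0$ and guarantees that the only globally defined solution of $w''''+|w|^{r-2}w=0$ is $w\equiv0$.

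Next I would argue by contradiction. If $\|u_n\|_\infty\not\to0$ then, along a subsequence, $M_n:=\|u_n\|_\infty\ge\delta>0$ for some $\delta>0$; in particular each such $u_n$ is nontrivial and $M_n>0$. I rescale so as to normalize the sup-norm to $1$: setting $\lambda_n:=M_n^{-\frac{r-2}4}$ and
\[
w_n(x):=M_n^{-1}u_n(\lambda_n x)=(u_n)_{\lambda_n}(x),
\]
the scaling above shows that $w_n$ solves the equation with parameter
\[
\tilde q_n:=\lambda_n^2 q_n=q_n\,M_n^{-\frac{r-2}2},
\]
while $\|w_n\|_\infty=1$. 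The key point is that $0<\tilde q_n\le q_n\,\delta^{-\frac{r-2}2}\to0$, because the lower bound $M_n\ge\delta$ controls the rescaling factor; thus the normalized solutions carry a vanishing parameter.

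I would then extract a bounded global solution of the limiting EFK equation and contradict Theorem \ref{th1}. Choose $x_n$ with $|w_n(x_n)|\ge\frac12$ and set $\tilde w_n(x):=w_n(x+x_n)$, still a solution with $\|\tilde w_n\|_\infty=1$ and $|\tilde w_n(0)|\ge\frac12$. Uniform a priori bounds on the derivatives (see below), together with Arzel\`a--Ascoli, give along a subsequence $\tilde w_n\to\tilde w$ in $C^3_{\mathrm{loc}}(\R)$; since $\tilde q_n\to0$ and the right-hand side $-\tilde q_n\tilde w_n''-|\tilde w_n|^{r-2}\tilde w_n$ then converges locally uniformly, $\tilde w_n''''$ converges as well, so $\tilde w$ is a globally defined solution of $\tilde w''''+|\tilde w|^{r-2}\tilde w=0$ with $\|\tilde w\|_\infty\le1$ and $|\tilde w(0)|\ge\frac12$. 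This contradicts the non-existence of nontrivial globally defined solutions at $q=0$ recorded above, and hence proves $\|u_n\|_\infty\to0$.

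I expect the only real obstacle to be the uniform a priori estimate needed for compactness, namely a bound on $\|w_n''\|_\infty$ (and then on $\|w_n''''\|_\infty$, and in turn on the first and third derivatives) depending only on $\|w_n\|_\infty=1$ and on a bound for $\tilde q_n$. I would obtain it from a Landau--Kolmogorov interpolation inequality on $\R$, $\|w''\|_\infty\le C\,\|w\|_\infty^{1/2}\|w''''\|_\infty^{1/2}$, combined with the equation in the form $\|w_n''''\|_\infty\le\tilde q_n\|w_n''\|_\infty+1$: for $n$ large (so that $\tilde q_n\le\frac12$) these yield $\|w_n''\|_\infty\le C\,(\tfrac12\|w_n''\|_\infty+1)^{1/2}$, an inequality which forces a uniform bound on $\|w_n''\|_\infty$, whence the remaining derivative bounds follow by interpolation. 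Alternatively, the boundedness of all derivatives of bounded solutions to \eqref{SH} is already available through the estimates underlying Theorem \ref{th1}.
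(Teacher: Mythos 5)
Your proof is correct, and although it shares the paper's core mechanism---scaling, normalization, translation, compactness, and a Liouville-type theorem for the limit equation at $q=0$---it takes a genuinely leaner route. The paper normalizes by ${\rm Osc}(u_n)$ rather than by $M_n=\|u_n\|_\infty$, and this choice forces three extra steps: it must prove the auxiliary claim \eqref{claim3}, i.e.\ $\liminf_n q_n/{\rm Osc}(u_n)^{\gamma}>0$ with $\gamma=\frac r2-1$ (your $\tilde q_n\to 0$ under the hypothesis $M_n\ge\delta$ is exactly the contrapositive of this, with the same exponent $(r-2)/2$); it must separately establish $\|w_n\|_\infty\le 1$ by a concavity argument (if $w_n>0$ everywhere then $\bigl(w_n''+\tilde q_n w_n\bigr)''<0$, while a bounded concave function on $\R$ must be constant); it must locate a common nonzero value $\lambda$ in the ranges $w_n(\R)$ by a compactness argument on intervals, to make the limit nontrivial; and finally it must rerun the concavity trick once more to upgrade ${\rm Osc}(u_n)\to 0$ to $u_n(0)\to 0$. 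Your sup-norm normalization makes $\|w_n\|_\infty=1$ automatic and makes the translated limit nontrivial ($|\tilde w(0)|\ge\frac12$) at no cost, so all of these steps disappear. What the paper's normalization buys is robustness: oscillation is the natural quantity when nonzero constants could solve the limit equation; your shortcut exploits the fact that for the homogeneous nonlinearity the only constant solution is $0$, which is perfectly legitimate here (you may equally cite \cite[Theorem 3.1]{ms}, as the paper does, since the limit is bounded, instead of the full strength of Theorem \ref{th1}). One small point of rigor, which you flag and which the paper's own proof shares: the global inequality $\|w''\|_\infty\le C\|w\|_\infty^{1/2}\|w''''\|_\infty^{1/2}$ presupposes $\|w''''\|_\infty<\infty$, which is not known a priori; this is repaired by applying the finite-interval Landau inequalities on unit intervals and absorbing the term $\tilde q_n\|w_n''\|$ for $n$ large (so that, say, $\tilde q_n\le\frac12$), which yields the uniform $C^4$ bound you need, matching \eqref{c4} in the paper.
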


\subsection{Structure of the paper}
In section 2 we will introduce the functional analytic setting we will use throughout the paper, together with some remarks on the minima of convex envelopes. In section 3 we will prove Theorem \ref{th1}. In section 4.1 we prove Theorem \ref{th2}, using a mountain pass procedure, together with Struwe's monotonicity trick to recover from the possible lack of the Palais-Smale condition on the relevant functional. In section 4.2 we apply the standard method of the Calculus of Variations to prove Theorem \ref{th3}. Section 5 is devoted to the study of the asymptotic behavior, as $q\downarrow 0$, of solutions to \eqref{SH}. In section 5.1 we prove Theorem \ref{th4b}, through a-priori estimates and a Jensen inequality involving convex envelopes. In section 5.2 we prove Theorem \ref{th5b}, through a scaling argument and some elementary Liouville-type techniques.

\section{Preliminary material}

Most of the results we will prove are based on critical point theory in suitable function spaces, which we will now describe. 

For any $ T> 0 $, we introduce the real Hilbert space
\[
H_T:=\bigl\{u: u\in H^2([0, T]), u'\in H^1_0([0, T])\bigr\},
\]
with scalar product
\[
\langle u, v\rangle_{H_T}=\int_0^Tu''v''\, dx+\int_0^T uv\, dx,
\]
and corresponding norm $\|u\|_{H_T}$.
For any measurable $v\colon [0, T]\to \R$  we will use the notations
\[
\|v\|_p=\|v\|_{L^p(0, T)}, \quad 1\le p\le +\infty,
\]
\[
\{|v|>K\}=\{x\in [0, T]: |v(x)|>K\}
\]
and similarly for other inequalities. We will still use  $\|v\|_\infty$ for $\|v\|_{L^\infty(\R)}$ when $v\colon \R\to \R$ and there is no risk of confusion. Given $F\in C^2(\R)$ and $q\in \R$, we define the functional $J_q\colon H_T\to \R$, henceforth called the (variational) {\em energy}, as
\[
J_q(u):=\int_{0}^T\frac{|u''|^2}{2}-q\frac{|u'|^2}{2}+F(u)\, dx,
\]
omitting the dependance on $T$.
Observe that we can freely add to $F$ and $J_q$ a constant so that $J_q(0)=F(0)=0$. 
Since we are mainly interested in potentials satisfying \eqref{QC}, notice that the latter inequality implies $F(0)=\min_{t\in \R} F(t)$. Thus, we can reduce to the case
\begin{equation}
\label{min}
0=F(0)=\min_{t\in \R} F(t),
\end{equation}
a weaker hypothesis we will sometimes assume. 

The link between solutions of \eqref{SH} and the functional $J_q$ is given in the following proposition.

\begin{proposition}[\cite{ms}]
\label{prop}
Let $u\colon [0, T]\to\R$ be a critical point for $J_q$ in $H_T$. Then, its even extension $\tilde{u}\colon [-T, T]\to \R$ defines a $2T$-periodic $C^4(\R)$ solution to \eqref{SH}.
\end{proposition}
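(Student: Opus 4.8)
The plan is first to extract from the critical point condition both the differential equation in the interior of $[0,T]$ and the complete set of boundary conditions satisfied by $u$, and then to verify that the even $2T$-periodic extension glues into a genuine $C^4$ solution on all of $\R$. Writing out $J_q'(u)[\varphi]=0$ for every $\varphi\in H_T$ yields the weak identity
\[
\int_0^T u''\varphi'' - q\, u'\varphi' + F'(u)\,\varphi\, dx = 0,
\]
where Gateaux differentiability of $J_q$ is justified by dominated convergence, using that $u\in H^2([0,T])\hookrightarrow C^1$ is bounded and that $F\in C^2$. Testing first against $\varphi\in C_c^\infty(0,T)$ shows that $u''''+qu''+F'(u)=0$ in $\mathcal{D}'(0,T)$. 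A routine bootstrap upgrades the regularity: since $F'(u)$ is continuous, $u''''=-qu''-F'(u)\in L^2$, whence $u\in H^4\hookrightarrow C^3$; feeding this back, $F'(u)\in C^1$ and $u''\in C^2$, so the equation holds classically with $u\in C^4([0,T])$ (indeed $C^5$, the further gain being limited only by the assumption $F\in C^2$).

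The key step is to recover the boundary conditions. The space $H_T$ already encodes the \emph{essential} conditions $u'(0)=u'(T)=0$, coming from $u'\in H^1_0([0,T])$. To obtain the \emph{natural} ones, I would integrate by parts twice in the weak identity while retaining the boundary terms. Because $\varphi'(0)=\varphi'(T)=0$ and $u'(0)=u'(T)=0$, the only surviving boundary contribution is $-[u'''\varphi]_0^T$, so the identity reduces to
\[
\int_0^T \bigl(u''''+qu''+F'(u)\bigr)\varphi\, dx - \bigl(u'''(T)\varphi(T)-u'''(0)\varphi(0)\bigr)=0.
\]
The integral vanishes by the interior equation, and $\varphi(0),\varphi(T)$ may be prescribed independently within $H_T$ (while keeping $\varphi'=0$ at the endpoints); this forces the natural boundary conditions $u'''(0)=u'''(T)=0$.

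It then remains to glue. The even extension $\tilde u(x):=u(|x|)$ on $[-T,T]$, continued $2T$-periodically, is smooth in the interior of each reflected copy, and the conditions $u'(0)=u'''(0)=0$ (respectively $u'(T)=u'''(T)=0$) are exactly the vanishing of the odd-order derivatives (orders one and three) required for $\tilde u$ to match to fourth order across the reflection points $x=0$ and $x=T$, hence across every point of $T\mathbb{Z}$ by periodicity; the even-order derivatives match automatically. Thus $\tilde u\in C^4(\R)$. Finally, \eqref{SH} is autonomous and invariant under $x\mapsto -x$, since only even-order derivatives of $u$ appear (namely $u''''$ and $u''$), so each reflected and translated copy of $u$ still solves it; as $\tilde u\in C^4$ and satisfies \eqref{SH} off the discrete junction set where all terms are continuous, the equation holds everywhere. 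The main obstacle is this second step: one must correctly identify which natural boundary conditions the variational formulation produces and check that these, together with the built-in conditions on $u'$, are precisely the parity conditions that upgrade the even periodic extension from merely continuous (or $C^1$) to $C^4$.
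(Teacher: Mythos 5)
Your proof is correct, and it is essentially the argument behind the statement: the paper itself offers no proof, deferring to \cite{ms}, where the proposition is established by exactly this standard route---interior equation from compactly supported test functions plus bootstrap to $C^4$, natural boundary conditions $u'''(0)=u'''(T)=0$ extracted from the surviving boundary term (the terms in $u''\varphi'$ and $u'\varphi$ dying because $\varphi'\in H^1_0$ and $u'\in H^1_0$), and the observation that $u'=u'''=0$ at both endpoints is precisely what makes the even $2T$-periodic extension $C^4$ across the reflection points, where the equation then holds by continuity since it involves only even-order derivatives. No gaps to report.
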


We will furthermore use the following elementary observation.

\begin{lemma}
\label{lsc}
For any $T>0$, $J_q\colon H_T\to \R$ is $C^1$ and weakly sequentially lower semi-continuous.
\end{lemma}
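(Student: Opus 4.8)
The plan is to exploit the one-dimensional Sobolev embedding, which on the bounded interval $[0,T]$ gives a \emph{compact} inclusion $H_T\hookrightarrow C^1([0,T])$ (since $H_T\subseteq H^2([0,T])$ and $H^2([0,T])$ embeds compactly into $C^1([0,T])$). This compactness is what makes both the indefinite quadratic term and the nonlinear term well-behaved, so I would isolate it at the very start and use it repeatedly.

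For the $C^1$ regularity, I would first propose the derivative
\[
J_q'(u)[v]=\int_0^T u''v''-q\,u'v'+F'(u)\,v\, dx,
\]
and check that this is a bounded linear functional on $H_T$: the bilinear forms $\int_0^T u''v''$ and $\int_0^T u'v'$ are continuous because $\|u''\|_2,\|u'\|_2\le \|u\|_{H_T}$, while $F'(u)$ is bounded on $[0,T]$ since $u\in C^0$ through the embedding. Gateaux differentiability is a routine difference-quotient computation, using $F\in C^2$ and the uniform bound on $u$ to pass to the limit by dominated convergence. The continuity of the map $u\mapsto J_q'(u)$ from $H_T$ into its dual — which upgrades Gateaux to Fr\'echet differentiability — reduces, for the nonlinear part, to showing $\int_0^T (F'(u_n)-F'(u))\,v\, dx\to 0$ uniformly for $\|v\|_{H_T}\le 1$; this follows at once, since $u_n\to u$ in $C^0$ forces $F'(u_n)\to F'(u)$ uniformly (continuity of $F'$) and $\|v\|_\infty\le C\,\|v\|_{H_T}$.

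For the weak lower semicontinuity, I would split
\[
J_q(u)=\underbrace{\frac12\int_0^T|u''|^2\,dx}_{=:A(u)}\;-\;\frac q2\int_0^T|u'|^2\,dx\;+\;\int_0^T F(u)\,dx,
\]
and treat the three pieces separately under a weak convergence $u_n\rightharpoonup u$ in $H_T$. The term $A$ is weakly lower semicontinuous: the bounded linear map $u\mapsto u''$ gives $u_n''\rightharpoonup u''$ in $L^2$, and the $L^2$-norm is weakly lower semicontinuous, so $\liminf A(u_n)\ge A(u)$. For the remaining two terms I would invoke compactness, which sends the weakly convergent sequence $u_n\rightharpoonup u$ to a strongly convergent one, so that $u_n\to u$ in $C^1([0,T])$; in particular $u_n'\to u'$ in $L^2$ and $u_n\to u$ uniformly. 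Consequently the middle term and $\int_0^T F(u_n)$ (the latter by uniform convergence and continuity of $F$) are in fact \emph{weakly continuous}, so their sign and convexity are irrelevant. Combining $\liminf(a_n+b_n+c_n)\ge \liminf a_n+\lim b_n+\lim c_n$, valid when $b_n,c_n$ converge, then yields $\liminf J_q(u_n)\ge J_q(u)$.

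The main obstacle — really the only subtlety — is precisely the indefinite quadratic term $-\tfrac q2\int_0^T|u'|^2$: for $q>0$ it is concave, so one cannot expect lower semicontinuity from convexity. The resolution is that the compact embedding $H_T\hookrightarrow C^1$ turns this term, as well as the nonlinear one, into a \emph{weakly continuous} perturbation of the genuinely weakly lower semicontinuous leading term $A$; everything else is standard Sobolev bookkeeping.
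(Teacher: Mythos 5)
Your proof is correct and follows essentially the same route as the paper: the compact embedding $H_T\hookrightarrow C^1([0,T])$ makes the terms $-\frac q2\int_0^T|u'|^2\,dx$ and $\int_0^T F(u)\,dx$ weakly continuous, while $\frac12\|u''\|_2^2$ is weakly lower semicontinuous by convexity, and the same embedding handles the $C^1$ regularity without growth conditions on $F$. Your write-up merely spells out the Fr\'echet differentiability details that the paper leaves as ``immediate.''
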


\begin{proof}
The functional $J_q$ being $C^1$ immediately follows from the Sobolev embedding
\[
\|u\|_\infty\le C_T\|u\|_{H_T},
\]
which ensures that there is no need of a growth condition for $F$. To prove lower semi-continuity, let $ \{v_n\}_n \subseteq H _T$ be a sequence such that $ v_n \rightharpoonup v $, weakly in $ H_T$. In particular, $\{v_n\}_n$ is bounded in $H_T$, which implies boundedness in $C^{1,\alpha}([0, T])$, by Sobolev embedding. Therefore, $\{v_n\}_n$ is compact in $C^1([0, T])$ by Ascoli-Arzel\`a, which ensures, by Lebesgue domi\-nated convergence, that
\[
-q\int_0^T|v_n'|^2\, dx+\int_0^TF(v_n)\, dx \to -q\int_0^T|v'|^2\, dx+\int_0^TF(v)\, dx.
\]
Since the remaining term $\frac 1 2 \|v_n''\|_{2}^2$ is weakly sequentially lower semi-continuous by convexity, the thesis follows.
\end{proof}

We end this section with a general result on convex envelopes, which is of some interest in itself.
More precisely, given $ G\colon \mathbb R^N \to \mathbb R $, we let
\[
G^*(x):= \sup \{g(x): \text{$g$ is convex and $g(y) \le G(y)$, for all $y\in \R^N$} \}
\]
be the convex envelope of $G$.

\begin{lemma}
\label{argmin}
Suppose $G\colon \R^N\to \R$ is a lower semi-continuous function such that
\begin{equation}
\label{argminh}
\liminf_{|x|\to +\infty} \frac{G(x)}{|x|}\ge\alpha, \quad \text{for some $\alpha\in \ ]0, +\infty[$}.
\end{equation}
 Then,
\begin{equation}
\label{th}
{\rm Argmin}(G^*)={\rm co}\big({\rm Argmin}(G)).
\end{equation}
\end{lemma}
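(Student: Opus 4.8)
The plan is to prove the two inclusions in \eqref{th} separately, using the fact that $G^*$ is the largest convex minorant of $G$. First I would establish the "easy" inclusion ${\rm Argmin}(G^*)\supseteq {\rm co}({\rm Argmin}(G))$. To this end, note that since $G^*\le G$ everywhere and $G^*$ is convex, the global minimum value $m^*:=\min G^*$ satisfies $m^*\le \min G=:m$. On the other hand, the constant function $y\mapsto m$ is convex and lies below $G$, so by maximality $G^*\ge m$, giving $m^*=m$. Thus $G$ and $G^*$ share the same minimum value $m$. Now if $x\in {\rm Argmin}(G)$ then $G^*(x)\le G(x)=m=m^*$, so $x\in {\rm Argmin}(G^*)$; hence ${\rm Argmin}(G)\subseteq {\rm Argmin}(G^*)$. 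Since ${\rm Argmin}(G^*)$ is a convex set (it is a sublevel set of the convex function $G^*$), it must contain the convex hull of ${\rm Argmin}(G)$, which gives the first inclusion.

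The reverse inclusion ${\rm Argmin}(G^*)\subseteq {\rm co}({\rm Argmin}(G))$ is the substantive part. Here the coercivity hypothesis \eqref{argminh} is essential: it guarantees that $G$ (and hence $G^*$) actually attains its minimum, and more importantly that $G^*$ admits a useful representation. The key tool I would invoke is the classical result that, for a coercive lower semi-continuous $G$, the convex envelope $G^*$ is given pointwise by the infimum over representations as convex combinations,
\[
G^*(x)=\inf\Big\{\sum_{i=1}^{N+1}\lambda_i\, G(x_i): \lambda_i\ge 0,\ \sum_i\lambda_i=1,\ \sum_i\lambda_i x_i=x\Big\},
\]
where $N+1$ points suffice by Carath\'eodory's theorem. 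Moreover, the linear growth from below in \eqref{argminh} forces the minimizing representations to stay in a bounded region: if some $|x_i|\to\infty$ with $\lambda_i$ bounded away from $0$, the corresponding term $\lambda_i G(x_i)\gtrsim \lambda_i\, \alpha\, |x_i|$ would blow up, contradicting that the infimum equals the finite value $G^*(x)$. This compactness, combined with the lower semi-continuity of $G$, is what allows the infimum to be \emph{attained} by some finite convex combination.

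With attainment in hand, I would argue as follows. Take $x\in {\rm Argmin}(G^*)$, so $G^*(x)=m$. By the representation and the compactness argument above, there exist points $x_1,\dots,x_{N+1}$ and weights $\lambda_i$ with $\sum_i\lambda_i x_i=x$ and $m=G^*(x)=\sum_i\lambda_i G(x_i)$. Since each $G(x_i)\ge m$ and the weights sum to $1$, equality of the weighted average with $m$ forces $G(x_i)=m$ for every $i$ with $\lambda_i>0$; that is, each active $x_i\in {\rm Argmin}(G)$. Therefore $x=\sum_i\lambda_i x_i$ is a convex combination of points of ${\rm Argmin}(G)$, so $x\in {\rm co}({\rm Argmin}(G))$, completing the reverse inclusion and hence the lemma.

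The main obstacle I anticipate is rigorously justifying the attainment of the infimum in the representation formula of $G^*$. The representation formula itself and Carath\'eodory's bound are standard, but extracting a convergent minimizing sequence of convex combinations requires precisely the linear coercivity \eqref{argminh} to prevent mass from escaping to infinity and to control the weights $\lambda_i$; without it, the minimum of $G^*$ could be approached only in the limit by combinations of points drifting off to infinity, and the clean identification of active points with ${\rm Argmin}(G)$ would fail. Making this compactness argument airtight — balancing the possibility that some weights $\lambda_i\to 0$ while the associated $x_i\to\infty$ in such a way that $\lambda_i G(x_i)$ stays bounded — is the delicate step that the hypothesis is designed to handle.
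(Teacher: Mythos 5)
Your proof is correct, but it takes a genuinely different route from the paper's. The paper proves the nontrivial inclusion ${\rm Argmin}(G^*)\subseteq {\rm co}\big({\rm Argmin}(G)\big)$ by a dual (separation) argument: assuming a minimizer $x_0$ of $G^*$ outside $C:={\rm co}\big({\rm Argmin}(G)\big)$, it separates $x_0$ from $C$ by Hahn--Banach and builds a small affine function $g_\eps$ which, thanks to the cone minorant $h(x)=\frac{\alpha}{2}(|x|-M)$ supplied by \eqref{argminh} and the compactness of the set where $g_{\alpha/4}\ge \max\{0,h\}$, lies below $G$ on all of $\R^N$; then $G^*(x_0)\ge g_\eps(x_0)>0$ gives the contradiction. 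You instead work on the primal side, via the Carath\'eodory representation $G^*(x)=\inf\{\sum_i\lambda_i G(x_i):\ \lambda_i\ge 0,\ \sum_i\lambda_i=1,\ \sum_i\lambda_i x_i=x\}$ (Rockafellar, \emph{Convex Analysis}, Cor.\ 17.1.5), showing attainment at minimizers of $G^*$ and that all active points must lie in ${\rm Argmin}(G)$. The two uses of \eqref{argminh} are parallel: the paper uses the linear growth to dominate an affine test function outside a compact set, you use it to prevent mass from escaping to infinity in minimizing combinations. Your route leans on a quoted representation theorem where the paper is self-contained from Hahn--Banach, but in exchange it yields slightly more: every minimizer of $G^*$ is exhibited explicitly as a convex combination of minimizers of $G$. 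The step you flag as delicate does close cleanly, and only needs to be carried out at minimizers of $G^*$: normalizing $\min G=0$, a minimizing sequence at $x\in{\rm Argmin}(G^*)$ has $\sum_i\lambda_i^k G(x_i^k)\to 0$ with nonnegative summands, so each term tends to $0$ \emph{individually}; for indices with $\lambda_i^k\to\lambda_i>0$, sublevel-set compactness (from \eqref{argminh}) and lower semicontinuity give $x_i^k\to x_i\in{\rm Argmin}(G)$ along a subsequence, while for $\lambda_i^k\to 0$ the bound $\lambda_i^k G(x_i^k)\ge \frac{\alpha}{2}\lambda_i^k\,(|x_i^k|-M)$ for $|x_i^k|\ge M$ forces $\lambda_i^k x_i^k\to 0$, so escaping points contribute nothing to the barycenter and $x=\sum_{\lambda_i>0}\lambda_i x_i\in{\rm co}\big({\rm Argmin}(G)\big)$. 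Two minor remarks: the representation formula holds without coercivity (\eqref{argminh} is needed only for the compactness, not, as you suggest, for the formula itself), and your easy inclusion is the same one-line observation as the paper's, namely $0\le G^*\le G$ together with convexity of the sublevel sets of $G^*$.
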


\begin{proof}
Clearly, ${\rm Argmin}(G)$, and thus ${\rm co}\big({\rm Argmin}(G))$, is compact and non empty, and we can suppose, without loss of generality, that $\min_{\R^N} G=0$. From \eqref{argminh} we can find $M>0$ such that $G(x)\geq \frac{\alpha}{2}|x|$, for any $|x|\geq M$, hence the function
\[
h(x)= \frac{\alpha}{2}(|x|-M)
\]
satisfies $h\leq G$ in $\R^N$. Since $G^*\geq h$ by construction, this implies that ${\rm Argmin}(G^*)$ is compact as well. 

Since $g\equiv 0$ satisfies $g\leq G$ and is convex, it holds $0\leq G^*\leq G$, which implies that ${\rm co}\big({\rm Argmin}(G)\big) \subseteq {\rm Argmin}(G^*)$.
We prove the opposite inequality by contradiction, and thus suppose that there exists $x_0$ such that 
\begin{equation}
\label{contr}
\text{$G^*(x_0)=0$ and $x_0\notin {\rm co}\big({\rm Argmin}(G)\big)=:C$.}
\end{equation}
By the Hanh-Banach Theorem, there exists $v\in \R^N$, $|v|=1$, such that
\begin{equation}
\label{jk}
\sup_{x\in C} \langle v, x\rangle=\langle v, x_1\rangle<\langle v, x_0\rangle,\quad \text{for some $x_1\in C$},
\end{equation}
where, by $\langle v, x\rangle$, we mean the standard duality coupling.
Let, for $\eps>0$ 
\[
g_\eps(x)=\eps\bigl\langle v, x-\frac{x_0+x_1}{2}\bigr\rangle, 
\]
and notice that for any $x\in C$ it holds, by \eqref{jk},
\[
g_\eps(x)\leq \eps\biggl(\sup_{y\in C}\langle v, y\rangle-\bigl\langle v, \frac{x_0+x_1}{2}\bigl\rangle\biggr) =\eps\bigl(\langle v, x_1\rangle -\bigl\langle v, \frac{x_0+x_1}{2}\bigr\rangle\biggr)=\eps\bigl\langle v, \frac{x_1-x_0}{2}\bigl\rangle<0.
\]
Therefore, for any $\eps>0$,
\begin{equation}
\label{gh}
\sup_{x\in C} g_\eps(x)<0.
\end{equation}
The set $\{ g_{\alpha/4}\geq h\}$ is compact, since 
\[
\limsup_{|x|\to +\infty}\frac{g_{\alpha/4}(x)}{h(x)}=\limsup_{|x|\to +\infty}\frac 1 2\frac{\bigl\langle v, x-\frac{x_0+x_1}{2}\bigr\rangle}{|x|-M}=\frac 1 2.
\]
Thus, $K:=\{g_{\alpha/4}\geq 0\}\cap \{g_{\alpha/4}\geq h\}$ is compact and, for any $x\in K$, it holds $G(x)>0$ since, otherwise, $x\in C$ and \eqref{gh} implies $g_{\alpha/4}(x)<0$, contradicting $x\in K$. Therefore, we can set
\[
\inf_{x\in K} G(x)=\beta>0.
\]
Now, for sufficiently small $\eps\in \ ]0, \alpha/4[$, it holds
\[
\sup_{x\in K}g_\eps(x)\leq \beta,
\]
and we claim that for such $\eps$ it holds $g_\eps\leq G$ in the whole $\R^N$. This is clearly true on $\{g_\eps\leq 0\}=\{g_{\alpha/4}\leq 0\}$, since $G\geq 0$. From the definition of $\eps$ it holds
\[
g_\eps(x)\leq \beta\leq G(x), \quad \text{for all $x\in K$}.
\]
Finally, on $\{g_{\alpha/4}\geq 0\}\cap \{g_{\alpha/4}< h\}$ one has 
\[
g_\eps(x)\leq g_{\alpha/4}(x)<h(x)\leq G(x),
\]
and the claim is proved. Therefore, being $g_\eps$ convex, we deduce $G^*\geq g_\eps$. By  \eqref{jk},
\[
G(x_0)\ge g_\eps(x_0)=\eps\bigl\langle v, \frac{x_0-x_1}{2}\bigr\rangle>0,
\]
which gives the desired contradiction to \eqref{contr}. 
\end{proof}

\begin{remark} 
Condition \eqref{argminh} is optimal in order to obtain \eqref{th}. Consider the $C^2(\R, \R)$ coercive function
\[
G(x)=
\begin{cases}
\log(1+x^2)&\text{if $x<0$},\\
x^2&\text{if $x\ge 0$}.
\end{cases}
\]
A straightforward computation shows that 
\[
G^*(x)=
\begin{cases}
0&\text{if $x<0$},\\
x^2&\text{if $x\ge 0$},
\end{cases}
\]
and \eqref{th} fails, since ${\rm Argmin}(G^*)=\ ]-\infty, 0]\neq \{0\}={\rm co}\big({\rm Argmin}(G)\big)$.
\end{remark}

\section{Non-existence of global solutions for superlinear EFK}

In this section we deal with the case $ q\le 0 $. The main result is the following. 

\begin{theorem}

Let $ F \in C^2 $ satisfy \eqref{QC} and 
\begin{equation} 
\label{liminf}
\liminf_{|t| \to \infty} \frac{F'(t) }{t|t|^\eps}> 0, \qquad \eps>0.
\end{equation}
Then, the only globally defined solutions of
\begin{equation}
\label{neg}
u''''+ qu''+ F'(u)= 0,
\end{equation}
for $ q \le 0 $ are constants.
\end{theorem}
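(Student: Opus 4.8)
The strategy is to combine a Bernis-type integral identity with the quasi-convexity and superlinearity assumptions to derive that any global solution must have vanishing derivatives, hence be constant. The key object will be the Hamiltonian energy $E_u$, which is constant along solutions of \eqref{neg}, together with suitable averaged (integrated) quantities over large intervals.

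I would first test \eqref{neg} against $u'$ to recover that $E_u$ (as defined in \eqref{he}) is constant. The next step is to produce a second conserved or monotone quantity by integrating \eqref{neg} against $u$ itself over an interval $[0,R]$. Integrating by parts, one obtains
\[
\int_0^R |u''|^2\,dx - q\int_0^R |u'|^2\,dx + \int_0^R F'(u)\,u\,dx = \big[u\,u''' - u'\,u'' + q\,u\,u'\big]_0^R.
\]
Since $q\le 0$, the first two terms on the left are nonnegative, and by \eqref{QC} the integrand $F'(u)u\ge 0$; the superlinear lower bound \eqref{liminf} upgrades this to $F'(t)t\ge c\,|t|^{2+\eps}$ for $|t|$ large. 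The heart of the argument is then to show that, for a global solution, the boundary terms on the right cannot keep up with the coercive growth forced on the left by \eqref{liminf}.

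The main obstacle is controlling the boundary terms $u\,u''' - u'\,u'' + q\,u\,u'$ at $R$ along a diverging sequence of radii, which is exactly where the Bernis technique enters: one plays off the a priori bound coming from the constancy of $E_u$ against the $L^{2+\eps}$ growth of $\int_0^R F'(u)u\,dx$. Concretely, I would seek a sequence $R_n\to\infty$ along which the boundary contributions are controlled (e.g. by a mean-value/pigeonhole choice of $R_n$ making $u(R_n)$, $u'(R_n)$, etc., suitably small relative to the accumulated integral), forcing $\int_0^{R_n} |u|^{2+\eps}\,dx$ to remain bounded. Combined with the coercivity this yields $u\in L^{2+\eps}(\R)$ together with $u',u''\in L^2(\R)$, which for a solution of a fourth-order equation with bounded coefficients propagates to decay of all derivatives.

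Finally, I would convert these global integrability statements into the conclusion $u'\equiv 0$. Having $u',u''\in L^2(\R)$ and $u\in L^{2+\eps}(\R)$, one revisits the energy identity: the constant value $E_u$ must be computed along a sequence where $u'(x),u''(x),u'''(x)\to 0$, forcing $E_u=0$, and then the integrated identity with vanishing boundary terms gives
\[
\int_\R |u''|^2\,dx + |q|\int_\R |u'|^2\,dx + \int_\R F'(u)\,u\,dx = 0.
\]
Each summand being nonnegative, they all vanish; in particular $\int_\R F'(u)u\,dx=0$ with $F'(u)u\ge 0$ gives $F'(u)u\equiv 0$, and $\int_\R |u'|^2=0$ gives $u'\equiv 0$, so $u$ is constant. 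The delicate point throughout is making the pigeonhole selection of radii rigorous so that no boundary term survives in the limit; this is precisely the step I expect to require the full force of Bernis' method and the superlinear hypothesis \eqref{liminf}.
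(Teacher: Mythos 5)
There is a genuine gap, and it sits exactly where you yourself flag uncertainty: the selection of radii $R_n$ controlling the boundary term $\bigl[u\,u'''-u'u''+q\,u\,u'\bigr]_0^{R}$ is not carried out, and as planned it cannot be. A pigeonhole/mean-value choice of $R_n$ requires an a priori bound on some integral against which to pigeonhole, but here the natural candidates $\int_0^R|u''|^2\,dx$ and $\int_0^R F'(u)u\,dx$ are precisely the quantities you are trying to bound, so the scheme is circular; moreover the boundary term involves $u'''(R_n)$ and the product $u(R_n)u'''(R_n)$, with no control whatsoever on the size of $u$ (the whole point is to rule out unbounded solutions). Nor does constancy of the Hamiltonian energy \eqref{he} supply an a priori bound: it contains the indefinite terms $u'u'''$ and $-\tfrac12|u''|^2$, so it controls nothing pointwise. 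Bernis' technique, as actually implemented in the paper, avoids boundary terms altogether: one tests \eqref{neg} with $\varphi=u\,\varphi_R^m$ for a compactly supported cutoff $\varphi_R$ and $m$ large, absorbs every term containing derivatives of the cutoff via Young's inequality and integrations by parts, reaching the Caccioppoli-type estimate \eqref{int2} with right-hand side $\frac{C}{R^2}\int u^2\varphi_R^{m-4}\,dx$; H\"older's inequality with $r=2+\eps$, the lower bound $F'(t)t\ge\delta|t|^{2+\eps}$ for $|t|>K$ coming from \eqref{liminf}, and one more absorption then give $\int_{\{|u|>K\}}|u|^r\varphi_R^m\,dx\le C R^{-\frac{r+2}{r-2}}+CK^2R^{-1}\to0$ as $R\to\infty$, whence $\|u\|_\infty\le K$. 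This absorption mechanism, not a choice of good radii, is the content of the method.

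Your endgame is also off target in two ways. First, even granting the integral identity, the conclusion $u\in L^{2+\eps}(\R)$ cannot hold: \eqref{liminf} gives coercivity of $F'(t)t$ only on $\{|t|>K\}$, and indeed nonzero constant solutions $u\equiv c$ with $F'(c)=0$, $c\neq0$ (perfectly compatible with \eqref{QC}) are global solutions not in $L^{2+\eps}(\R)$; at best one bounds $\int_{\{|u|>K\}}|u|^{2+\eps}$. Second, when $q=0$ the term $|q|\int_\R|u'|^2\,dx$ vanishes identically, so your final step cannot yield $u'\equiv0$ directly and would need a separate argument through $u''\equiv0$. The paper sidesteps all of this: the cutoff estimate proves only \emph{boundedness}, $\|u\|_\infty\le K$, and the constancy of bounded solutions under \eqref{QC} with $q\le0$ is then imported wholesale from \cite[Theorem 3.1]{ms}. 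In short: your overall aims (exploit \eqref{QC} for signs, \eqref{liminf} for coercivity) match the paper's, but the one step you defer is the entire proof, and the deferred strategy (boundary terms plus pigeonhole radii) is not the one that works.
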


\begin{proof}
We let, for simplicity, $p=-q\ge 0$ and $F'(t)=f(t)$.
The weak formulation of (\ref{neg}) is
\[
\int u'' \varphi''+pu' \varphi'+ f(u) \varphi\, dx= 0, \qquad \text{for all} \quad \varphi \in C_c^2(\R).
\]
Letting $ \varphi= u \eta $, we have
\[
 \int |u''|^2 \eta+ 2 u'' u' \eta'+ u'' u \eta''+p |u'|^2 \eta+ qu u' \eta'+ f(u) u \eta\, dx= 0 
\]
and, by Young's inequality,
\[
 \int |u''|^2 \eta+p |u'|^2 \eta+ f(u) u \eta \,dx\le \int \frac{1}{2} \eta |u''|^2+ \frac{1}{2} \frac{|\eta''|^2}{\eta} u^2+ \frac{p}{2} \eta |u'|^2+ \frac{p}{2} \frac{|\eta'|^2}{\eta} u^2 \,dx- 2\int u'' u' \eta'\, dx.
\]
It follows that
\begin{equation}
\label{int}
 \int |u''|^2 \eta+ p |u'|^2 \eta+ 2f(u)u \eta\, dx \le  \int u^2 \bigl(\frac{|\eta''|^2}{\eta}+ p\frac{|\eta'|^2}{\eta} \bigr)\,dx- 4 \int u'' u' \eta'\,dx.
\end{equation}
We estimate the last term integrating by parts as
\[
\int u'' u' \eta' \, dx = \int \bigl(\frac{|u'|^2}{2}\bigr)' \eta'\,  dx= -  \int \frac{|u'|^2}{2} \eta'' \,dx= -\frac{1}{2} \int u' (u' \eta'')\, dx= \frac{1}{2} \int u u'' \eta''+ u u' \eta''' \,dx.
\]
Moreover,
\[
\int u u' \eta''' dx= \int \bigl(\frac{u^2}{2}\bigr)' \eta''' dx= -\int \frac{u^2}{2} \eta '''' dx
\]
and, again by Young's inequality,
\[
\int u u'' \eta'' dx \le \int \frac{1}{2} \eta |u''|^2+ \frac{1}{2} \frac{|\eta''|^2}{\eta} u^2dx.
\]
Inserting into \eqref{int}, and using $f(t)t\ge 0$ for all $t\in \R$, we obtain
\begin{equation}
\label{int2}
 \int |u''|^2 \eta+ p |u'|^2 \eta+ f(u)u \eta \,dx \le C \int u^2 \biggl(\frac{|\eta''|^2+ p|\eta'|^2}{\eta}+ |\eta''''| \biggr) \, dx.
\end{equation}

Fix $ m \in \mathbb N $ large, $ R>1 $, and let $ \eta= \varphi_R^m $, where $ \varphi_R(x)= \varphi \bigl(\frac{x}{R}\bigr) $ and  $ \varphi \in C^\infty_c(\R, [0, 1])$ is a nonnegative cut-off function such that 
\[
 \varphi(x)= 
 \begin{cases} 
 1 & \text{if $|x|\leq 1$}, \\ 
 0 & \text{if $|x|\ge 2$}.
 \end{cases} 
 \]
Using $0\le \varphi_R\le 1$ and  $|\varphi_R^{(i)}|\leq C/R^i$, an explicit calculation shows that
\[
\frac{|\eta'|^2}{\eta} \le \frac{C}{R^2} \varphi_R^{m-2}\le \frac{C}{R^2}\varphi_R^{m-4},\qquad \frac{|\eta''|^2}{\eta} \le \frac{C}{R^4}\varphi_R^{m-4}, \qquad |\eta''''|\le \frac{C}{R^4}\varphi_R^{m-4}.
\]
With this choice, \eqref{int2} implies, through H\"older's inequality, $R>1$ and for any $m>4\frac{r}{r-2}$, $r>2$, that
\[
\begin{split}
\int f(u) u \varphi_R^m \,dx  &\le \frac{C}{R^2} \int u^2 \varphi_R^{m-4}\, dx\\
&\le \frac{C}{R^2}\int u^2\varphi_R^{\frac{2m}{r}}\varphi_R^{(1-\frac{2}{r})m-4}\, dx\\
&\le \frac{C}{R^2}\left(\int |u|^r\varphi_R^m\, dx\right)^{\frac{2}{r}}\left(\int \varphi_R^{m-4\frac{r}{r-2}}\, dx\right)^{1-\frac 2 r}\\
&\le \frac{C}{R^2}\left(\int |u|^r\varphi_R^m\, dx\right)^{\frac{2}{r}}R^{1-\frac 2 r}, 
\end{split}
\]
where we used that ${\rm supp}(\varphi_R)\subseteq [-2R, 2R]$.
Observe that (\ref{liminf}) implies that there exist $ K> 0 $, $\delta>0$ such that $ f(t) t> \delta |t|^{2+\eps} $, when $ |t|> K $. Letting $r=2+\eps$ and choosing $m>4\frac{2+\eps}{\eps}$, it follows, by Young's inequality, that
\[
\begin{split}
\delta \int_{\{|u|> K\}} |u|^r \varphi_R^m\, dx & \le \int_{\{|u|> K\}} f(u) u \varphi_R^m\, dx \le \frac{C}{R^{1+ \frac r 2}} \biggl(\int |u|^r \varphi_R^m\, dx \biggr)^{\frac 2 r}  \\
& \le \frac{C}{R^{1+ \frac 2 r}} \biggl[\biggl(\int_{\{|u|> K\}} |u|^r \varphi_R^m\, dx\biggr)^{\frac 2 r}+ \biggl(\int_{\{|u| \le K\}} |u|^r \varphi_R^m\, dx \biggr)^{\frac 2 r}\biggr] \\
& \le \frac{\delta}{2} \int_{\{|u|> K\}} |u|^r \varphi_R^m\, dx+  \frac{C_{\delta, r}}{(R^{1+ \frac 2 r})^{\frac{r}{r-2}}}+ \frac{CK^2}{R}.
\end{split}
\]  
Absorbing to the left the first term on the right, we obtain
\[
\int_{\{|u|> K\}}|u|^r\varphi_R^m\, dx\le \frac{C_{\delta, r}}{R^{\frac{r+2}{r-2}}}+ \frac{CK^2}{R}\to 0, \quad \text{for $R\to +\infty$},
\]
which implies that $\|u\|_\infty\le K$.  Therefore, by \cite[Theorem 3.1]{ms}, $ u $ is constant.

\end{proof}

\section{Periodic solutions for S-H}

\subsection{The one-sided sublinear case}

This section is devoted to the proof of the following theorem.

\begin{theorem}
\label{subex}
Let $ F\in C^2$ satisfy \eqref{min}. 
For almost every $ q>0$ such that
\[
24\limsup_{t \to -\infty} \frac{F(t)}{t^2}< q^2<  4F''(0),
\]
 there exists a nontrivial periodic solution to 
\eqref{SH}.
\end{theorem}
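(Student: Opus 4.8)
The plan is to realize the periodic solution as a nontrivial critical point of the energy $J_q$ on $H_T$, the period $2T$ being chosen in terms of $q$ (Proposition \ref{prop}). Since no superquadratic (Ambrosetti--Rabinowitz type) condition is imposed on $F$, the functional $J_q$ need not satisfy the Palais--Smale condition, so a direct mountain pass argument is unavailable; instead I would set up a mountain pass geometry that is \emph{uniform} for $q$ in a compact subinterval and recover a critical point for almost every such $q$ by Struwe's monotonicity trick. The loss of a measure-zero set of parameters is exactly the source of the ``almost every'' in the statement.

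Write $\ell:=\limsup_{t\to-\infty}F(t)/t^2$ and fix a compact subinterval $[q_1,q_2]\subset\{q>0:24\ell<q^2<4F''(0)\}$. Near $0$, Taylor's formula $F(t)=\tfrac{F''(0)}{2}t^2+o(t^2)$ reduces $J_q$ to the quadratic form $\int_0^T\tfrac12|u''|^2-\tfrac q2|u'|^2+\tfrac{F''(0)}{2}u^2\,dx$; in the cosine basis $\cos(k\pi x/T)$ the mode of frequency $\omega=(k\pi/T)^2$ carries the factor $\omega^2-q\omega+F''(0)\ge F''(0)-q^2/4>0$, uniformly for $q\le q_2$, so this form is positive definite and coercive on $H_T$, making $0$ a strict local minimum with a uniform ring estimate $J_q(u)\ge\alpha>0$ on $\|u\|_{H_T}=\rho$. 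For the descending direction I would test with the nonnegative $\phi(x)=1+\cos(\kappa x)$, $\kappa=\pi/T$ (so $\phi'(0)=\phi'(T)=0$); since $0=F(0)=\min F$, for each $\delta>0$ one has $F(t)\le(\ell+\delta)t^2+C_\delta$ for $t\le0$, whence
\[
J_q(-s\phi)\le\frac{s^2T}{4}\big(\kappa^4-q\kappa^2+6(\ell+\delta)\big)+C_\delta T.
\]
As $\min_{\kappa>0}\big(\kappa^2+6\ell/\kappa^2\big)=\sqrt{24\ell}<q_1$, I can fix $\kappa$ (hence $T$) and a small $\delta$ so that the bracket is negative at $q_1$; being decreasing in $q$, it stays negative on all of $[q_1,q_2]$, so $J_q(-s\phi)\to-\infty$ and, for $s_0$ large, $e:=-s_0\phi$ satisfies $J_q(e)<0$ throughout the subinterval. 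Thus the class $\Gamma$ of paths from $0$ to $e$ is $q$-independent and $c_q:=\inf_{\gamma\in\Gamma}\max_t J_q(\gamma(t))\in[\alpha,+\infty)$.

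Now decompose $J_q=A-qB$ with $A(u)=\int_0^T\tfrac12|u''|^2+F(u)\,dx$ and $B(u)=\tfrac12\int_0^T|u'|^2\,dx\ge0$. Since $B\ge0$, the map $q\mapsto c_q$ is non-increasing on $[q_1,q_2]$, hence differentiable for a.e. $q$; at each point of differentiability Struwe's monotonicity trick yields a Palais--Smale sequence $\{u_n\}$ at level $c_q$ with $B(u_n)$ bounded by the difference quotients of $c_q$, and then $A(u_n)=J_q(u_n)+qB(u_n)$ together with $F\ge0$ bounds $\int_0^T|u_n''|^2$ as well. A bounded Palais--Smale sequence converges strongly in $H_T$: by the compact embedding $H_T\hookrightarrow C^1([0,T])$ used in Lemma \ref{lsc}, the lower-order terms in $J_q'(u_n)$ pass to the limit, and testing against $u_n-u$ upgrades weak to strong convergence. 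The limit is a critical point $u_q$ with $J_q(u_q)=c_q>0=J_q(0)$, hence nontrivial, and by Proposition \ref{prop} its even extension is a nonconstant $2T$-periodic solution of \eqref{SH}. Exhausting $\{q>0:24\ell<q^2<4F''(0)\}$ by countably many such subintervals and discarding the corresponding null sets gives the conclusion for a.e. admissible $q$.

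The main obstacle is precisely the lack of compactness. Without growth control from above, the constant Fourier mode is \emph{not} controlled by $A$ and $B$: if $F$ fails to be coercive at $+\infty$, a Palais--Smale sequence whose mean runs off to $+\infty$ keeps $B(u_n)$, $\int_0^T|u_n''|^2$ and $\int_0^T F(u_n)$ bounded, so general Palais--Smale sequences may be unbounded and the usual abstract hypothesis behind the trick (``$A\to\infty$ or $B\to\infty$'') fails. The cleanest remedy I see is to first replace $F$ by a $C^2$ modification $\widetilde F\ge0$, left unchanged on $(-\infty,M]$ and near $0$ but made coercive for $t\ge M$, apply the trick to the now-coercive $\widetilde A$ to produce $u_q$, and finally remove the truncation through an a priori bound forcing $u_q\le M$ (using $\min F=0$ and that the mountain points toward $-\infty$). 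The bulk of the technical work lies in this boundedness/compactness step, the geometry and the monotonicity bookkeeping being comparatively routine.
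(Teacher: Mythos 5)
Your skeleton is the paper's: a mountain pass geometry made uniform on compact subintervals of $\{q>0:24\ell<q^2<4F''(0)\}$ (where $\ell=\limsup_{t\to-\infty}F(t)/t^2$), Struwe's monotonicity trick to get Palais--Smale sequences for a.e.\ $q$, strong convergence via the compact embedding of $H_T$ into $C^1$, and Proposition \ref{prop} to turn the critical point into a $2T$-periodic solution, with the null sets summed over a countable exhaustion. Your two geometric steps are Lemma \ref{coer} and Lemma \ref{negaz} in mild disguise: the paper obtains the ring estimate by interpolating $\|u'\|_2^2\le\|u\|_2\|u''\|_2$ rather than by Fourier diagonalization, and your test family $-s\bigl(1+\cos(\kappa x)\bigr)$ is the paper's $u_\mu=\mu\bigl(\cos(\pi x/T)-\theta\bigr)$ in the limit $\theta\to 1$ (your additive constant $C_\delta$ replaces the paper's device of taking $\theta>1$ so that $u_\mu\le -K$); your bracket $\kappa^4-q\kappa^2+6(\ell+\delta)$ and the threshold $\min_{\kappa>0}\bigl(\kappa^2+6\ell/\kappa^2\bigr)=\sqrt{24\ell}$ are exactly the computation behind the constant $24$, as in the Remark following the proof of Theorem \ref{subex}.

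The one place you depart from the paper is the boundedness of the Palais--Smale sequences, and it is also the one place your argument does not close. Your diagnosis is accurate: with $J_q=A-qB$, $A(u)=\int_0^T\frac12|u''|^2+F(u)\,dx$, $B(u)=\frac12\int_0^T|u'|^2\,dx$, the trick bounds only $A(u_n)$ and $B(u_n)$, i.e.\ $\|u_n''\|_2$, $\|u_n'\|_2$ and $\int_0^T F(u_n)\,dx$, and under \eqref{min} alone the constant mode is uncontrolled, so Jeanjean's coercivity hypothesis can fail (the paper settles this step simply by invoking the trick and ``reasoning as in \cite{smets}'', with no truncation). But your remedy is incomplete in two concrete respects. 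First, you coercivize $\widetilde F$ only on $[M,+\infty)$, whereas the hypotheses also allow $F$ to be bounded at $-\infty$ (the condition $24\ell<q^2$ is an \emph{upper} growth bound, compatible with $\ell=0$ and $\liminf_{t\to-\infty}F(t)<\infty$); constants $c\to-\infty$ then still give $A+B$ bounded, so the same escape persists on the left. Since your descent path $t\mapsto -ts_0\phi$ has range in the fixed compact interval $[-2s_0,0]$, you could truncate on both sides without destroying the geometry, but as written the fix does not restore the abstract hypothesis. Second, the de-truncation step --- the a priori bound $u_q\le M$ --- is asserted, not proved: \eqref{SH} is fourth order, so there is no maximum principle to appeal to, and the natural energy route ($\int_0^T\widetilde F(u_q)\,dx\le c_q+qB(u_q)$ forcing $u_q$ away from the modified region) requires quantifying how the trick's bound on $B(u_q)$ and the level $c_q$ depend on the truncation, which you do not do. So: identical route through the geometry and the a.e.-$q$ mechanism, a sharper-than-the-paper identification of where compactness is delicate, but a repair that, as sketched, leaves the theorem unproved at exactly that point.
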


We will divide the proof in some lemmas, letting in the following
\[
\alpha:=\limsup_{t \to -\infty} \frac{F(t)}{t^2}.
\]

\begin{lemma}
\label{coer}
Let $F$ satisfy \eqref{min}. For any $0\le b< 2\sqrt{F''(0)}$ and any $T>0$, there exist $\eps>0$ and $\theta>0$ such that
\[
J_q(u) \ge \theta \|u\|_{H_T}^2, \qquad \text{for all $q\le b$ and $ \|u\|_{H_T}\le \eps$}.
\]

\end{lemma}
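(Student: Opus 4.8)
The plan is to reduce the estimate, near the origin, to the positivity of a constant-coefficient quadratic form, the essential ingredient being the boundary condition $u'(0)=u'(T)=0$ built into $H_T$ (recall $u'\in H^1_0([0,T])$). Since \eqref{min} forces $F(0)=0$ and $F'(0)=0$, Taylor's formula gives $F(t)=\tfrac12 F''(0)\,t^2+o(t^2)$ as $t\to 0$. Hence, for any prescribed $\delta>0$ there is $t_0>0$ such that $F(t)\ge \tfrac12(F''(0)-\delta)\,t^2$ whenever $|t|\le t_0$. Because $b<2\sqrt{F''(0)}$, I first fix $\delta>0$ so small that $b^2<4(F''(0)-\delta)$, and set $m:=F''(0)-\delta>0$.

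Next I localize via the Sobolev embedding $\|u\|_\infty\le C_T\|u\|_{H_T}$ (recorded in the proof of Lemma \ref{lsc}): choosing $\eps:=t_0/C_T$, every $u$ with $\|u\|_{H_T}\le\eps$ satisfies $\|u\|_\infty\le t_0$, so the pointwise bound $F(u(x))\ge \tfrac{m}{2}u(x)^2$ holds throughout $[0,T]$. For such $u$ and any $q\le b$, the inequality $-q|u'|^2\ge -b|u'|^2$ (valid as $q\le b$ and $|u'|^2\ge 0$) yields
\[
J_q(u)\ge \tfrac12\int_0^T |u''|^2-b\,|u'|^2+m\,u^2\,dx.
\]

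The key step is the interpolation inequality obtained by integrating by parts: since $u'\in H^1_0([0,T])$ the boundary terms vanish, so $\int_0^T|u'|^2\,dx=-\int_0^T u\,u''\,dx\le\|u''\|_2\,\|u\|_2$. Writing $A:=\|u''\|_2^2$ and $C:=\|u\|_2^2$, this gives $\int_0^T|u'|^2\le\sqrt{AC}$, and therefore (using $b\ge 0$)
\[
\int_0^T |u''|^2-b\,|u'|^2+m\,u^2\,dx\ge A-b\sqrt{AC}+m\,C=\Phi\big(\sqrt A,\sqrt C\big),
\]
where $\Phi(a,c):=a^2-b\,ac+m\,c^2$. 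The matrix of $\Phi$ has positive diagonal and determinant $m-b^2/4>0$ by the choice of $\delta$, so $\Phi$ is positive definite and $\Phi(a,c)\ge\mu\,(a^2+c^2)$ for its least eigenvalue $\mu>0$. Consequently $J_q(u)\ge\tfrac{\mu}{2}(A+C)=\tfrac{\mu}{2}\|u\|_{H_T}^2$, and $\theta:=\mu/2$ concludes the proof.

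I do not expect a serious obstacle here; the content is the interpolation step, which crucially exploits $u'\in H^1_0$ to kill the boundary terms, together with the elementary $2\times2$ positivity criterion. The only point demanding attention is \emph{uniformity in $q$}: both $\eps=t_0/C_T$ and $\theta=\mu/2$ must be independent of $q$. This is automatic, since the worst case $-q\le -b$ has already been absorbed into the constant-coefficient form, and $C_T$, $t_0$, $\delta$, $\mu$ depend only on $T$, $b$ and $F''(0)$.
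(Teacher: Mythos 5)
Your proof is correct and follows essentially the same route as the paper's: Taylor expansion of $F$ near $0$ with a slightly reduced second derivative, localization via the Sobolev embedding $\|u\|_\infty\le C_T\|u\|_{H_T}$, the interpolation $\|u'\|_2^2\le\|u''\|_2\|u\|_2$ obtained by integrating by parts (using $u'\in H^1_0$), and absorption of the $-b|u'|^2$ term. The only cosmetic difference is that you invoke positive definiteness of the $2\times 2$ form $a^2-bac+mc^2$ (least eigenvalue $\mu$) where the paper applies Young's inequality with a free parameter $\bar\lambda\in\,\bigl]\tfrac{b}{4\eta F''(0)},\tfrac1b\bigr[$ — the two are equivalent, both hinging on $b^2<4m$.
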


\begin{proof}
We will suppose that $F''(0)>0$, otherwise there is nothing to prove. We choose $\eta\in \ ] 0, 1[$ such that $b< 2\sqrt{\eta F''(0)}$. For a sufficiently small $M$ it holds, by Taylor's formula,
\[
F(t)\ge \eta F''(0) \frac{t^2}{2}, \quad \text{for all $|t|\le M$}.
\]
Since
\[
\|u\|_{\infty} \le C_T \|u\|_{H_T},
\]
we have that, for $\eps:= M/C_T$, it holds
\begin{equation}
\label{effe}
\int_0^TF(u)\, dx\ge \eta F''(0)\int_0^T \frac{u^2}{2}\, dx,\quad \text{ for all $\|u\|_{H_T}\le\eps$}.
\end{equation}
Integrating by parts, using $u\in H_T$ and applying Holder's inequality, we get
\begin{equation}
\label{osc}
\int_0^T |u'|^2 dx = \bigl[u u'\bigr]_0^T- \int_0^T u u'' dx  \le \biggl(\int_0^T u^2 dx \biggr)^{1/2} \biggl(\int_0^T |u''|^2 dx \biggr)^{1/2}.
\end{equation}
By Young's inequality in the form $2ab\le \lambda a^2+b^2/(4\lambda)$, we obtain
\[
\int_0^T|u'|^2\le \lambda\int_0^T|u''|^2\, dx+\frac{1}{4\lambda}\int_0^T u^2\, dx,
\]
which we rewrite as
\[
b\int_0^T\frac{|u'|^2}{2}\le \lambda b\int_0^T\frac{|u''|^2}{2}\, dx +\biggl(\frac{b}{4\lambda}-\eta F''(0)\biggr)\int_0^T u^2\, dx +\eta F''(0)\int_0^T\frac{u^2}{2}\, dx.
\]
Using \eqref{effe} we thus have, for all $\|u\|_{H_T}\le \eps$,
\[
b\int_0^T\frac{|u'|^2}{2}\le \lambda b\int_0^T\frac{|u''|^2}{2}\, dx+\int_0^T F(u)\, dx +\biggl(\frac{b}{4\lambda}-\eta F''(0)\biggr)\int_0^T u^2\, dx. 
\]
Rearranging and using $J_q(u)\ge J_b(u)$, we obtain
\[
J_q(u)\ge (1-\lambda b)\int_0^T|u''|^2\, dx+\biggl(\eta F''(0)- \frac{b}{4\lambda}\biggr)\int_0^Tu^2\, dx,
\]
if $\|u\|_{H_T}\le \eps$. Since $b^2<4\eta F''(0)$ by assumption, we can choose
\[
\bar\lambda\in \, \left]\frac{b}{4\eta F''(0)}, \frac 1 b\right[,
\]
obtaining the claim with
\[
\theta=\min\left\{ 1-\bar\lambda b, \eta F''(0)- \frac{1}{4\bar\lambda}\right\}>0.
\]

\end{proof}

\begin{lemma}
\label{negaz}
For any $ a>0$ such that $a^2>24\alpha$
and all $T>0$ such that
\begin{equation}
\label{condT}
\frac{a-\sqrt{a^2-24\alpha}}{2}< \frac{\pi^2}{T^2}< \frac{a+\sqrt{a^2-24\alpha}}{2},
\end{equation}
 there exists a function $ \tilde u \in H_T $ such that  $ J_q(\tilde u)< 0 $, for all $ q\ge a$. 

\end{lemma}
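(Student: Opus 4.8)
The plan is to exhibit a \emph{single} test function that works simultaneously for all $q\ge a$, exploiting the fact that $q\mapsto J_q(\tilde u)$ is decreasing whenever $\tilde u$ is non-constant. Given the boundary conditions $u'(0)=u'(T)=0$ encoded in $H_T$, the natural building block is the first non-constant Neumann mode $\phi(x):=\cos(\pi x/T)$, which satisfies $\phi'(0)=\phi'(T)=0$, $\int_0^T\phi\,dx=0$, $\int_0^T\phi^2\,dx=T/2$, $\int_0^T|\phi'|^2\,dx=\pi^2/(2T)$ and $\int_0^T|\phi''|^2\,dx=\pi^4/(2T^3)$. I would take $\tilde u_R:=R(\phi-1)\in H_T$ for a large parameter $R>0$; note that $\tilde u_R\le 0$ everywhere and $\tilde u_R\to-\infty$ pointwise away from $x=0$, which is exactly what brings the one-sided quantity $\alpha=\limsup_{t\to-\infty}F(t)/t^2$ into play. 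Since $\tilde u_R''=R\phi''$ and $\tilde u_R'=R\phi'$, the quadratic part of the energy is explicit:
\[
\frac{R^2}{2}\int_0^T|\phi''|^2\,dx-\frac{qR^2}{2}\int_0^T|\phi'|^2\,dx=\frac{R^2\pi^2}{4T}\Big(\frac{\pi^2}{T^2}-q\Big).
\]

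The crux is estimating $\int_0^T F(\tilde u_R)\,dx$ from above as $R\to\infty$, where only the bound near $-\infty$ is available. Fixing any $\alpha'>\alpha$, the definition of $\alpha$ gives $M>0$ with $F(t)\le \alpha' t^2$ for all $t\le -M$, while $F$ is bounded, say by $C_M$, on the compact interval $[-M,0]$ (recall $F\ge 0=F(0)$ from \eqref{min}). I split $[0,T]$ according to whether $\tilde u_R\le -M$ or $-M<\tilde u_R\le 0$. On the first set I bound $F(\tilde u_R)\le \alpha'\tilde u_R^2=\alpha' R^2(\phi-1)^2$; on the second set, which is a shrinking neighbourhood of $x=0$ of measure $O(R^{-1/2})$ (since $1-\cos(\pi x/T)\sim\tfrac12(\pi x/T)^2$ near $0$), I bound $F(\tilde u_R)\le C_M$. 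Dividing by $R^2$ and letting $R\to\infty$ yields
\[
\limsup_{R\to\infty}\frac{1}{R^2}\int_0^T F(\tilde u_R)\,dx\le \alpha'\int_0^T(\phi-1)^2\,dx,
\]
and since $\alpha'>\alpha$ is arbitrary and $\int_0^T(\phi-1)^2\,dx=T/2+T=3T/2$, the right-hand side may be replaced by $\tfrac{3\alpha T}{2}$. This one-sided estimate, handling both the $\limsup$ and the bounded contribution near $x=0$, is the main obstacle.

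Combining the two computations gives
\[
\limsup_{R\to\infty}\frac{J_a(\tilde u_R)}{R^2}\le \frac{\pi^2}{4T}\Big(\frac{\pi^2}{T^2}-a\Big)+\frac{3\alpha T}{2}.
\]
Writing $\lambda:=\pi^2/T^2$ and multiplying the right-hand side by the positive factor $4T/\pi^2$ turns it into $\lambda-a+6\alpha/\lambda$, whose sign (after multiplying by $\lambda>0$) is that of $\lambda^2-a\lambda+6\alpha$. This quadratic is negative precisely when $\lambda$ lies strictly between its roots $\tfrac{a\pm\sqrt{a^2-24\alpha}}{2}$, which is exactly condition \eqref{condT}. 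Hence, under \eqref{condT}, the $\limsup$ above is strictly negative, so $J_a(\tilde u_R)<0$ for $R$ large enough. Finally, since $\tilde u_R$ is non-constant, $\int_0^T|\tilde u_R'|^2\,dx>0$, so $q\mapsto J_q(\tilde u_R)$ is strictly decreasing; therefore $J_q(\tilde u_R)\le J_a(\tilde u_R)<0$ for every $q\ge a$, and $\tilde u:=\tilde u_R$ is the required function.
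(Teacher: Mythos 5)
Your proof is correct and follows essentially the same route as the paper's: a scaled, downward-shifted cosine as test function, the one-sided bound $F(t)\le(\alpha+\eps)t^2$ for $t\le -K$, and reduction of the sign of $J_q$ (using $q\ge a$ and monotonicity in $q$) to the negativity of the quadratic $z^2-az+6\alpha$ at $z=\pi^2/T^2$, which is exactly \eqref{condT}. The only difference is technical rather than conceptual: the paper uses the shift $\theta>1$ so that $u_\mu=\mu(\cos(\pi x/T)-\theta)$ lies entirely in $\{t\le -K\}$ and then lets $\theta\to 1$ (and $\eps\to 0$), whereas you fix the shift equal to $1$ and absorb the small set near $x=0$ where $\tilde u_R>-M$ via the boundedness of $F$ on $[-M,0]$ (your $O(R^{-1/2})$ measure estimate is not even needed, since $C_M T/R^2\to 0$ already suffices) --- both variants produce the same constant $24\alpha$.
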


\begin{proof}

Fix $\theta>1$ and let, for  $ \mu\ge 2$, $  u_{\mu}(x):= \mu \big(\cos \bigl(\frac{\pi}{T} x \bigr)- \theta\big)\le (1-\theta)\mu $. An explicit calculation shows that
\begin{equation}
\label{calc}
\int_0^T |u''|^2dx=  \frac{\pi^4}{2T^3}, \qquad  \int_0^T |u'|^2 dx=  \frac{\pi^2}{2T},
\end{equation}
so that
\[
J_q( u_{\mu})= \mu^2 \frac{T}{4} \biggl(\frac{\pi}{T}\biggr)^2 \biggl[\biggl(\frac{\pi}{T}\biggr)^2- q\biggr]+ \int_0^T F( u_{\mu}) dx.
\]
Furthermore, for any $\eps>0$, there exists $ K> 0 $ such that 
\[
F(t)\le (\alpha+\eps) t^2, \qquad \text{for all} \quad t \le -K.
\]
Letting now $\mu\ge K/(\theta-1)$, we deduce that $ u_{\mu}\le (1-\theta)\mu \le -K $, and thus
\[
\int_0^TF(u_\mu)\, dx\le (\alpha+\eps)\mu^2\int_0^T\bigl(\cos(\frac{\pi}{T}x)-\theta\bigr)^2\, dx
\le (\alpha+\eps)\mu^2\bigl(\frac{T}{2}+\theta^2 T\bigr).
\]
Since $q\ge a$, we infer
\[
J_q(u_\mu)\le \mu^2\frac{T}{4}\biggl[\biggl(\frac{\pi}{T}\biggr)^4 -a\biggl(\frac{\pi}{T}\biggr)^2+2(\alpha+\eps)(1+2\theta^2)\biggr]
\]
and, letting $z=\pi^2/T^2$, $J_q(u_\mu)<0$ amounts to the existence of positive solutions to $z^2-az+2(\alpha+\eps)(1+2\theta^2)<0$, i.e.
\[
a^2<8(\alpha+\eps)(1+2\theta^2).
\]
Letting $\eps\to 0$ and $\theta\to 1$, we obtain the claim.
\end{proof}

Thanks to Lemmas \ref{coer} and \ref{negaz}, we obtain that $ J_q $ has the so-called \emph{mountain pass geometry}, uniformly on compact subintervals $[a, b]\subseteq \ ]\sqrt{24\alpha},  2\sqrt{F''(0)} [$. Indeed, it suffices to observe that $T=\sqrt{2}\pi/\sqrt{a}$ satisfies \eqref{condT} and consider $J_q$ on $H_T$. Moreover, $q\mapsto J_q(u)$ is monotone non-increasing in $q$, for any $u\in H_T$, and thus Struwe's monotonicity trick provides, for a.e. $q\in [a, b]$, a bounded Palais-Smale sequence $ \{u_n\}_n\subseteq H_T $ for $J_q$, at the mountain-pass level $c_{q, T}>0$.
Reasoning as in \cite{smets}, this in turn provides a mountain-pass critical point $ u_q\in H_T $, that is, through Proposition \ref{prop}, a $2T$-periodic solution of \eqref{SH}. The proof of Theorem \ref{subex} is thus complete. 

\begin{remark}
Inspecting the proof, one can rephrase the previous theorem putting more emphasis on the possible periods for which existence occurs. Namely, since we are assuming $6\alpha<F''(0)$ (otherwise there is nothing to prove), elementary calculus shows that then
\[
\inf_{z>0} z+\frac{6\alpha}{z}<2\sqrt{F''(0)}.
\]
Therefore, we can fix $T>0$ such that
\[
\frac{\pi^2}{T^2}+\frac{6\alpha T^2}{\pi^2}<2\sqrt{F''(0)}.
\]
Then, the previous proof shows that, for almost any $q$ such that
\[
\frac{\pi^2}{T^2}+\frac{6\alpha T^2}{\pi^2}<q<2\sqrt{F''(0)},
\]
there exists a $2T$-periodic solution to \eqref{SH}.
\end{remark}

\subsection{The superlinear case}

\begin{lemma}
Let $F\in C^2$ with $F(0)=F'(0)=0$. For any $q>2\sqrt{F''(0)} $, there exists $T>0$ such that $\inf_{H_T} J_q<0$.
\end{lemma}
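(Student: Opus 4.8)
The plan is to exploit the strict inequality $q>2\sqrt{F''(0)}$, equivalently $q^2>4F''(0)$, by testing $J_q$ on a function of small amplitude, so that the behaviour of $F$ near the origin alone dictates the sign of the energy. Since $F(0)=F'(0)=0$, Taylor's formula gives $F(t)=\frac{F''(0)}{2}t^2+o(t^2)$ as $t\to 0$; hence on functions with small sup-norm the nonlinear term is a negligible perturbation of the quadratic $\frac{F''(0)}{2}\int u^2$, and $J_q$ reduces to a purely quadratic functional whose sign can be forced to be negative by a suitable choice of the period $T$ (note that the hypothesis tacitly requires $F''(0)\ge 0$, which I assume throughout).

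Concretely, I would fix a period $T>0$, to be chosen later, and take the test function $u_\mu(x):=\mu\cos(\pi x/T)$, which belongs to $H_T$ since $u_\mu'(0)=u_\mu'(T)=0$. Its derivative integrals are exactly the ones recorded in \eqref{calc}, namely $\int_0^T|u_\mu''|^2\,dx=\mu^2\pi^4/(2T^3)$ and $\int_0^T|u_\mu'|^2\,dx=\mu^2\pi^2/(2T)$, while $\int_0^T u_\mu^2\,dx=\mu^2 T/2$. For the potential term, given $\eps>0$ Taylor's formula furnishes $\delta>0$ with $|F(t)-\frac{F''(0)}{2}t^2|\le \eps\, t^2$ for $|t|\le\delta$; therefore, as soon as $\mu\le\delta$ (so that $\|u_\mu\|_\infty\le\mu\le\delta$),
\[
\Bigl|\int_0^T F(u_\mu)\,dx-\frac{F''(0)}{2}\int_0^T u_\mu^2\,dx\Bigr|\le \eps\int_0^T u_\mu^2\,dx=\eps\,\mu^2\frac{T}{2}.
\]
Collecting the three contributions and setting $z:=\pi^2/T^2$, this yields
\[
J_q(u_\mu)\le \frac{\mu^2 T}{4}\bigl[z^2-qz+F''(0)\bigr]+\eps\,\mu^2\frac{T}{2}.
\]

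The final step is to render the bracket strictly negative. The quadratic $z\mapsto z^2-qz+F''(0)$ has discriminant $q^2-4F''(0)>0$ by hypothesis, and its roots $z_\pm=\frac{q\pm\sqrt{q^2-4F''(0)}}{2}$ are nonnegative (their product is $F''(0)\ge 0$ and their sum is $q>0$), so the quadratic is negative on the nonempty interval $\,]z_-,z_+[\,\subseteq\ ]0,+\infty[$. I would pick any $z$ in this interval, record $z^2-qz+F''(0)=-c<0$, and set $T:=\pi/\sqrt z$. Choosing then $\eps<c/2$ and finally $\mu\le\delta$ gives
\[
J_q(u_\mu)\le \frac{\mu^2 T}{4}(-c)+\eps\,\mu^2\frac{T}{2}<0,
\]
so that $\inf_{H_T}J_q\le J_q(u_\mu)<0$. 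The only delicate point is the order of the choices — one must fix $z$ (hence $T$ and $c$) before selecting $\eps$ and $\mu$ — but no genuine obstacle arises: the amplitude $\mu$ can be made arbitrarily small to absorb the $o(\mu^2)$ error into the strictly negative quadratic leading term, and the argument covers the degenerate case $F''(0)=0$ (where simply $z\in\ ]0,q[$) as well.
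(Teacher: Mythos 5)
Your proof is correct and follows essentially the same route as the paper: the same test function $\mu\cos(\pi x/T)$, the same Taylor expansion of $F$ near $0$, and the same reduction to the quadratic $z^2-qz+F''(0)$ in $z=\pi^2/T^2$ being negative for some $z>0$. The only cosmetic difference is that the paper absorbs the Taylor error into the quadratic's constant term (using $F(t)\le(F''(0)+\eps)t^2/2$ and requiring $q^2>4(F''(0)+\eps)$), whereas you keep the error separate and dominate it by the strictly negative bracket via $\eps<c/2$ --- both are valid and equivalent.
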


\begin{proof}

 Choosing $ u(x)= \cos\bigl(\frac{\pi}{T}x\bigr) $, we have $\int_0^T u^2\, dx=\frac{T}{2}$.
For any $\eps>0$ there exists $M> 0 $ such that $ F(t) \le (F''(0)+\eps) t^2/2 $, for all $ |t| \le M $. For all $ 0< \lambda< M $, let now $ u_{\lambda}(x):= \lambda u(x) $. A direct calculation using \eqref{calc} shows that
\[
\begin{split}
J_q(u_\lambda) 
& \le \lambda^2 \frac{\pi^2}{4T} \biggl(\frac{\pi^2}{T^2}- q \biggr)+  \int_0^T F(u_{\lambda}) dx  \\
& \le \lambda^2 \frac{\pi^2}{4T} \biggl(\frac{\pi^2}{T^2}- q \biggr)+ \frac{F''(0)+\eps}{2}\lambda^2\int_0^T  u(x)^2 dx  \\
&\le \frac{\lambda^2}{4} \biggl[\frac{\pi^2}{T} \biggl(\frac{\pi^2}{T^2}- q \biggr)+ (F''(0)+\eps)T  \biggr]=:\lambda^2 A.
\end{split}
\]
Now, $A<0$ amounts to $z^2-qz+(F''(0)+\eps)<0$ having a positive solution $z=\pi^2/T^2$, which holds as long as $q^2-4(F''(0)+\eps)>0$. Letting $\eps\to 0$ completes the proof.

\end{proof}

\begin{remark}
\label{remT}
Clearly, the previous lemma also provides with a precise interval of possible periods $T$ for which $\inf_{H_T} J_q<0$. Indeed, the thesis holds for all $T$ such that
\[
\frac{q-\sqrt{q^2-4F''(0)}}{2}<\frac{\pi^2}{T^2}<\frac{q+\sqrt{q^2-4F''(0)}}{2}.
\]
Thus, in the degenerate case $F''(0)=0$, we see that, for all sufficiently large $T$ (precisely, for $qT^2>\pi^2$), the thesis of the previous lemma holds. This will be essential in the study of the asymptotic behavior, as $q\downarrow 0$, of solutions to the S-H equation.
\end{remark}

\begin{lemma}
\label{boundb}
Suppose that $F\in C^2$ satisfies $F\ge 0$ and
\[
\liminf_{|t|\to +\infty}\frac{F(t)}{t^2}\ge\alpha, \quad \text{for some $\alpha\in \ ]0, +\infty[$}.
\]
Then, $J_q$ is bounded from below on $H_T$, for any $q<\sqrt{2\alpha}$.
\end{lemma}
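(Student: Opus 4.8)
The plan is as follows. Since the case $q\le 0$ is immediate (then every term defining $J_q$ is nonnegative, so $J_q\ge 0$), I assume $0<q<\sqrt{2\alpha}$. The first step is to turn the asymptotic hypothesis into a \emph{pointwise} quadratic lower bound on $F$. Fix $\eps\in\ ]0, \alpha/2[$. By $\liminf_{|t|\to+\infty}F(t)/t^2\ge \alpha$ there is $K>0$ with $F(t)\ge (\alpha-\eps)t^2$ for $|t|>K$, while $F\ge 0$ gives, for $|t|\le K$, the trivial bound $F(t)\ge 0\ge (\alpha-\eps)t^2-(\alpha-\eps)K^2$. Hence $F(t)\ge (\alpha-\eps)t^2-C_\eps$ for all $t\in\R$, with $C_\eps:=(\alpha-\eps)K^2$, and integrating,
\[
\int_0^T F(u)\, dx\ge (\alpha-\eps)\int_0^T u^2\, dx-C_\eps T.
\]

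Next I would tame the sign-indefinite term $-\frac q2\int_0^T|u'|^2\, dx$. Since $u'\in H^1_0([0,T])$, the interpolation inequality established in \eqref{osc} applies, giving $\int_0^T|u'|^2\, dx\le \|u\|_2\,\|u''\|_2$; combining it with the elementary estimate $\|u\|_2\|u''\|_2\le \frac t2\|u''\|_2^2+\frac{1}{2t}\|u\|_2^2$ (valid for every $t>0$), and using $q>0$, I obtain
\[
-\frac q2\int_0^T|u'|^2\, dx\ge -\frac{qt}{4}\int_0^T|u''|^2\, dx-\frac{q}{4t}\int_0^T u^2\, dx.
\]
Inserting the last two displays into the definition of $J_q$ yields
\[
J_q(u)\ge \Big(\tfrac12-\tfrac{qt}{4}\Big)\int_0^T|u''|^2\, dx+\Big(\alpha-\eps-\tfrac{q}{4t}\Big)\int_0^T u^2\, dx-C_\eps T.
\]

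It then remains to choose the free parameter $t$ so that both coefficients are nonnegative. Taking $t=1/q$ makes the first coefficient equal to $1/4>0$, while the second becomes $\alpha-\eps-q^2/4$, which is strictly positive because $q^2<2\alpha$ forces $q^2/4<\alpha/2$ and we arranged $\eps<\alpha/2$. With this choice the two integral terms are nonnegative, whence $J_q(u)\ge -C_\eps T$ for every $u\in H_T$, and $J_q$ is bounded from below, as claimed.

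I expect no serious obstacle here; the only point needing care is the \emph{simultaneous} choice of $\eps$ and of the interpolation parameter $t$ rendering both coefficients in the last display nonnegative. This is comfortably feasible under $q<\sqrt{2\alpha}$ — indeed the same argument, optimizing $t$ over the admissible interval $[\,q/(4(\alpha-\eps)),\, 2/q\,]$, bounds $J_q$ from below on the larger range $q<2\sqrt{2\alpha}$ — so the stated hypothesis leaves ample room.
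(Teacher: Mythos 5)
Your proof is correct, but it takes a genuinely different route from the paper's. The paper works on the sublevel set $\{J_q\le 0\}$: from \eqref{le0} it keeps only $\|u''\|_2^2\le q\|u'\|_2^2$, feeds this into the interpolation \eqref{osc} to get $\|u'\|_2^2\le q\|u\|_2^2$, then splits $[0,T]$ into $\{|u|>M\}$ and $\{|u|\le M\}$ and invokes \eqref{le0} a second time to absorb a term $\tfrac{q^2}{2\theta\alpha}\|u'\|_2^2$; that absorption is exactly what forces $q^2<2\theta\alpha<2\alpha$, and the output is only an a priori bound on $\|u'\|_2$ over the negativity set, whence $J_q\ge -\tfrac{q}{2}\|u'\|_2^2\ge -C$. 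You instead convert the hypothesis into the global pointwise bound $F(t)\ge(\alpha-\eps)t^2-C_\eps$ and spend \eqref{osc} through a parametrized Young inequality, arriving at the quadratic lower bound
\[
J_q(u)\ge \Bigl(\tfrac12-\tfrac{qt}{4}\Bigr)\int_0^T|u''|^2\,dx+\Bigl(\alpha-\eps-\tfrac{q}{4t}\Bigr)\int_0^T u^2\,dx-C_\eps T.
\]
This buys several things the paper's argument does not give directly: an explicit uniform bound $J_q\ge -C_\eps T$; with $t=1/q$, strictly positive coefficients in front of $\|u''\|_2^2$ and $\|u\|_2^2$, hence coercivity of the sublevel sets in the $H_T$-norm (a fact the paper must re-derive by hand inside the proof of Theorem \ref{perex}); and, as your closing remark correctly notes, validity on the larger range $q<2\sqrt{2\alpha}$, obtained by choosing $\eps\le \alpha-q^2/8$ and then any $t\in\bigl[\tfrac{q}{4(\alpha-\eps)},\tfrac{2}{q}\bigr]$. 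That enlarged range is in fact sharp at the quadratic level: for $F(t)=\alpha t^2$ and suitable $T$, the test functions $\mu\cos(\pi x/T)$ with $\pi^2/T^2$ near $\sqrt{2\alpha}$ make $J_q$ unbounded from below as soon as $q^2>8\alpha$. The paper's loss of the factor $4$ stems precisely from discarding the $\|u''\|_2^2$ term in \eqref{le0} rather than weighing it against $\|u\|_2^2$ with a free parameter, as you do.
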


\begin{proof}
Clearly, we can suppose that $q>0$, otherwise $J_q\ge 0$ trivially. Since $J_q(u)\geq -\frac q 2 \|u'\|_{2}^2$, it suffices to bound from above $\|u'\|_{2}$, for any $ u \in H_T $ such that $ J_q(u)\leq 0 $. 
Therefore, we can suppose that $\|u'\|_2\neq 0$ and
\begin{equation}
\label{le0}
\int_0^T|u''|^2\, dx+2\int_0^TF(u)\, dx\leq q\int_0^T|u'|^2\, dx.
\end{equation}
In particular, it holds $\|u''\|_2^2\le q\|u'\|_2^2$ which, inserted into \eqref{osc}, gives
\[
\int_0^T|u'|^2\, dx\le  \biggl(\int_0^T u^2 dx \biggr)^{1/2}\biggl(q\int_0^T |u'|^2 dx \biggr)^{1/2},
\]
i.e., being $\|u'\|_2\neq 0$,
\[
\int_0^T |u'|^2 dx \le q\int_0^T u^2 dx.
\] 
For any $\theta\in\  ]0,1[$, let $M>0$ be such that $F(t)\ge \theta\alpha t^2$, for all $|t|>M$.
From the previous displayed inequality and \eqref{le0}, we obtain
\[
\begin{split}
\int_0^T |u'|^2 dx &\le q\biggl(\int_{\{|u|> M\}} u^2 dx + \int_{\{|u|\le M\}} u^2\, dx\biggr)\\
&\le q\biggl(\int_{\{|u|> M\}} \frac{F(u)}{\theta\alpha} dx + M^2|\{|u(x)|\le M\}|\biggr)\\
&\le q\biggl(\int_{0}^T \frac{F(u)}{\theta\alpha} dx + M^2T\biggr)\\
&\le q\biggl(\frac{q}{2\theta\alpha}\int_0^T |u'|^2 dx + M^2T\biggr),
\end{split}
\]
 which implies
\[
\biggl(1-\frac{q^2}{2\theta\alpha}\biggr)\int_0^T |u'|^2 dx\le qM^2T.
\]
If $q^2\le 2\theta\alpha$, then $1-\frac{q^2}{2\theta\alpha}>0$, and $\|u'\|_{2}$ is universally bounded. Being $\theta\in\  ]0, 1[$ arbitrary, we obtain the claim. 
\end{proof}

The direct method of Calculus of Variations immediately provides the following existence result. For a precise range of the values of $T$ for which the thesis holds, we refer to Remark \ref{remT}.

\begin{theorem}
\label{perex}
Let $F$ satisfy $\eqref{min}$. For any $q>0$ such that
\[
4F''(0)<q^2<2\liminf_{|t|\to +\infty}\frac{F(t)}{t^2},
\]
there exists a nontrivial periodic solution to \eqref{SH}.
\end{theorem}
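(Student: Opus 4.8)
The plan is to apply the direct method of the Calculus of Variations to the energy $J_q$ on a suitably chosen space $H_T$, as the placement of this statement right after Lemma \ref{boundb} suggests. Write $\alpha := \liminf_{|t|\to +\infty} F(t)/t^2$, so that the hypothesis $4F''(0) < q^2 < 2\alpha$ splits into the two conditions $q > 2\sqrt{F''(0)}$ and $q < \sqrt{2\alpha}$. Note also that \eqref{min} forces $F \ge 0$ on all of $\R$ and, since $0$ is then an interior minimizer of the $C^2$ function $F$, that $F'(0) = 0$; in particular the standing assumptions of both Lemma \ref{boundb} and the lemma preceding Remark \ref{remT} are met.

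First I would fix the period. Since $q > 2\sqrt{F''(0)}$, the lemma preceding Remark \ref{remT} (together with that remark) produces a $T > 0$ for which $\inf_{H_T} J_q < 0$; I fix such a $T$ and work in $H_T$. Simultaneously, since $q < \sqrt{2\alpha}$, Lemma \ref{boundb} guarantees that $J_q$ is bounded from below on $H_T$. Thus $m := \inf_{H_T} J_q$ is finite and strictly negative, and I may pick a minimizing sequence $\{u_n\}_n \subseteq H_T$ which, after discarding finitely many terms, satisfies $J_q(u_n) \le 0$ for every $n$.

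The main work is to show that $\{u_n\}_n$ is bounded in $H_T$, and this is the step I expect to be the crux, since Lemma \ref{boundb} as stated only yields a lower bound. I would extract the needed coercivity by re-reading the proof of that lemma. Any $u$ with $J_q(u) \le 0$ satisfies \eqref{le0}, whence $\int_0^T |u''|^2\, dx \le q\int_0^T |u'|^2\, dx$ and, using $F \ge 0$, also $\int_0^T F(u)\, dx \le \frac{q}{2}\int_0^T |u'|^2\, dx$. The proof of Lemma \ref{boundb} already bounds $\|u'\|_2$ uniformly on the sublevel set $\{J_q \le 0\}$; the first inequality then bounds $\|u''\|_2$, while the coercivity estimate $\int_0^T u^2\, dx \le \frac{1}{\theta\alpha}\int_0^T F(u)\, dx + M^2 T$ obtained there bounds $\|u\|_2$. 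Together these control the full norm $\|u_n\|_{H_T}^2 = \|u_n''\|_2^2 + \|u_n\|_2^2$.

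Finally, boundedness yields a subsequence with $u_n \rightharpoonup u$ weakly in $H_T$; by the weak sequential lower semicontinuity of $J_q$ (Lemma \ref{lsc}), $J_q(u) \le \liminf_n J_q(u_n) = m < 0$, so $u$ attains the infimum. Since $J_q$ is $C^1$ on the Hilbert space $H_T$, this unconstrained minimizer is a critical point, and Proposition \ref{prop} turns its even extension into a $2T$-periodic $C^4$ solution of \eqref{SH}. Nontriviality is then immediate: every constant $c \in H_T$ has $J_q(c) = T\, F(c) \ge 0$ by \eqref{min}, whereas $J_q(u) < 0$, so $u$ cannot be constant.
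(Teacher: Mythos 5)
Your proposal is correct and takes essentially the same approach as the paper's own proof: fix $T$ via the lemma preceding Remark \ref{remT} so that $\inf_{H_T}J_q<0$, use Lemma \ref{boundb} (and, crucially, the uniform bound on $\|u'\|_2$ over the sublevel set $\{J_q\le 0\}$ extracted from its proof) to deduce that this sublevel set is bounded in $H_T$, then conclude by weak compactness, Lemma \ref{lsc}, and Proposition \ref{prop}, with nontriviality from $J_q(u)<0\le TF(c)$. The only cosmetic differences are that you bound $\|u''\|_2^2$ by $q\|u'\|_2^2$ directly from \eqref{le0} rather than via $2J_q(u)+q\|u'\|_2^2$ as the paper does, and that you phrase the direct method through an explicit minimizing sequence instead of the bounded-sublevel-set shortcut.
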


\begin{proof}
By the previous two lemmas, $-\infty<\inf_{H_T} J_q<0$, for some $T>0$. Moreover, the proof of Lemma \ref{boundb} shows that there exists a constant $C=C(q, T, F)$ such that
\[
J_q(u)\leq 0\quad \Rightarrow\quad \|u'\|^2_{2}\leq C,
\]
which implies, for any $u\in \{u\in H_T:J_q(u)\leq 0\}$,
\[
\|u''\|_{2}^2\leq 2J_q(u)+q\|u'\|_{2}^2\leq qC.
\]
Finally, we can assume that 
\[
\liminf_{|t|\to +\infty}\frac{F(t)}{t^2}>0
\]
(otherwise there is nothing to prove), and thus there exists $M>0$ such that $F(t)\geq \alpha t^2/2$, for $|t|\geq M$. Still for $u$ such that $J_q(u)\leq 0$, it holds
\[
\begin{split}
\|u\|_{2}^2&\leq \int_{\{|u|\leq M\}} u^2\, dx +\frac{2}{\alpha}\int_{\{|u|\geq M\}} F(u)\, dx\\
&\le M^2T+\frac{2}{\alpha}\int_0^T F(u)\, dx\\
&\le M^2T+\frac{2}{\alpha}\biggl( J_q(u)+q\|u'\|_{2}^2\biggr)\\
&\le M^2T+\frac{2qC}{\alpha}.
\end{split}
\]
Hence, $\{u\in H_T:J_q(u)\leq 0\}$ is bounded, and thus weakly sequentially relatively compact. Now, Lemma \ref{lsc} provides the existence of a minimum $\bar u$, which is nontrivial, due to $J_q(\bar u)<0$. Finally, Proposition \ref{prop} implies that the $2T$-periodic even extension of $\bar u$ is a solution to \eqref{SH}.
\end{proof}

\section{Asymptotic behavior as $q\downarrow 0$}

\subsection{Periodic solutions of minimal energy}
In this section we discuss the asymptotic behavior of the periodic solutions of \eqref{SH} obtained in Theorem \ref{perex}, as $q\downarrow 0$. Clearly, in order to allow $q\downarrow 0$, we will assume in the following that $F''(0)=0$. 
Upon vertical and horizontal translations of the potential $F$, we can suppose that $0=F(0)=\min_{\R} F$.
We define
\begin{equation}
\label{defH}
\tilde F(t):= \min \{F(t), F(-t)\},\qquad H(t)=\Big( \widetilde F(\sqrt{|t|})\Big)^*.
\end{equation}

\begin{lemma}
\label{phi}
Suppose that $F\in C^2$ satisfies
\begin{equation}
\label{argmin}
{\rm Argmin}(F)=\{0\},
\end{equation}
\begin{equation}
\label{supq}
\liminf_{|t|\to +\infty}\frac{F(t)}{t^2}>0,
\end{equation}
\begin{equation}
\label{subq}
F''(0)=0.
\end{equation}
Then $H$, defined in \eqref{defH}, is an even convex function such that
\[
\varphi(t):=
\begin{cases}
 \frac{H(t)}{t} &\text{if $t\neq 0$},\\
 0&\text{if $t=0$}
 \end{cases}
 \]
is continuous and strictly increasing.

\end{lemma}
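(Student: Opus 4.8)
The plan is to reduce everything to elementary properties of the even function $g(t):=\tilde F(\sqrt{|t|})$ and of its convex envelope $H=g^*$. First I would record the facts that feed the argument. Since $0=F(0)=\min_\R F$ and ${\rm Argmin}(F)=\{0\}$, the symmetrized potential $\tilde F$ is even, continuous, nonnegative, and vanishes exactly at $0$; moreover $F'(0)=0$ (as $0$ is a minimizer), so the degeneracy $F''(0)=0$ and Taylor's formula give $\tilde F(s)=o(s^2)$ as $s\to0$, while \eqref{supq} yields $\liminf_{s\to+\infty}\tilde F(s)/s^2>0$. Passing to $g$, these become: $g$ is even, continuous, nonnegative, ${\rm Argmin}(g)=\{0\}$, and
\[
\frac{g(t)}{|t|}=\frac{\tilde F(\sqrt{|t|})}{(\sqrt{|t|})^2}\to 0\quad(t\to0),\qquad \liminf_{|t|\to+\infty}\frac{g(t)}{|t|}=\liminf_{s\to+\infty}\frac{\tilde F(s)}{s^2}>0.
\]

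Next I would treat $H=g^*$. Convexity is immediate from the definition as a supremum of convex minorants, and evenness follows because $g$ is even: if $h$ is a convex minorant of $g$, so is $x\mapsto h(-x)$, whence $g^*$ is symmetric. The two-sided bound $0\le H\le g$ holds since $0$ is a convex minorant of $g\ge0$ and $g^*\le g$ by construction; in particular $H$ is finite, hence continuous, with $H(0)=0$. The growth condition $\liminf_{|t|\to+\infty}g(t)/|t|>0$ lets me invoke the convex-envelope lemma established in the preliminary section (with conclusion ${\rm Argmin}(G^*)={\rm co}\,{\rm Argmin}(G)$), giving ${\rm Argmin}(H)={\rm co}\{0\}=\{0\}$; this is the one external input. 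Continuity of $\varphi$ away from the origin is clear, and at $0$ the bound $0\le H(t)\le g(t)=o(|t|)$ forces $\varphi(t)=H(t)/t\to0=\varphi(0)$.

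For monotonicity I would first note that $\varphi$ is odd (since $H$ is even), so it suffices to argue on $(0,+\infty)$ and reflect. Because $H$ is convex with $H(0)=0$, the chord slope $t\mapsto H(t)/t$ is non-decreasing, i.e. $H(t_1)\le(t_1/t_2)H(t_2)$ for $0<t_1<t_2$. To upgrade to strict monotonicity, suppose $\varphi(t_1)=\varphi(t_2)=:c$ for some $0<t_1<t_2$. Writing $t_1=(t_1/t_2)t_2+(1-t_1/t_2)\cdot0$, the convexity inequality on $[0,t_2]$ becomes an equality at the interior point $t_1$, which forces $H$ to be affine on $[0,t_2]$; together with $H(0)=0$ and $H(t_2)=ct_2$ this gives $H(s)=cs$ on $[0,t_2]$. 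But then $c=\lim_{s\to0^+}H(s)/s=0$ by the previous paragraph, so $H\equiv0$ on $[0,t_2]$, contradicting ${\rm Argmin}(H)=\{0\}$. Hence $\varphi$ is strictly increasing on $(0,+\infty)$; oddness extends this across $0$ (with $\varphi<0$ on $(-\infty,0)$, $\varphi>0$ on $(0,+\infty)$, $\varphi(0)=0$) to all of $\R$.

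I expect the strict monotonicity to be the only delicate point, and it is exactly where all three hypotheses enter: ${\rm Argmin}(F)=\{0\}$ and \eqref{supq} guarantee (through the convex-envelope lemma) that ${\rm Argmin}(H)=\{0\}$, while the degeneracy $F''(0)=0$ is what makes $\varphi(0^+)=0$ and thereby rules out the competing linear behavior $H(t)=c|t|$, for which $\varphi$ would be constant on each half-line. Everything else — convexity and evenness of $H$, the bounds $0\le H\le g$, and continuity of $\varphi$ — is soft.
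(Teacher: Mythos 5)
Your proof is correct and follows essentially the same route as the paper's: evenness and convexity of $H$ from the definition of the convex envelope, the preliminary envelope lemma (via the growth condition \eqref{supq}) to get ${\rm Argmin}(H)=\{0\}$, the bound $H(t)\le F(\sqrt{|t|})=o(|t|)$ from $F''(0)=0$ to get $\varphi(0^+)=0$, and the chord-slope characterization for monotonicity. The only (immaterial) difference is in the strictness step: the paper turns the equality $\varphi(t_1)=\varphi(t_2)=\lambda>0$ into a support line $\lambda t\le H(t)$ and contradicts $\varphi(0^+)=0$ directly, whereas you use equality in the convexity inequality to force $H$ affine on all of $[0,t_2]$, conclude $c=0$, and contradict ${\rm Argmin}(H)=\{0\}$ --- both hinge on the same two ingredients.
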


\begin{proof}
Since $G(t):=\widetilde{F}(\sqrt{|t|})$ is even, it follows immediately that $H$ is even. From \eqref{supq} we can find $\lambda>0$ such that $F(x)\geq \lambda |x|^2$, for any sufficiently large $|x|$, which implies that $\widetilde{F}(\sqrt{t})\geq \lambda |t|$, for sufficiently large $|t|$. Therefore, $G$ satisfies \eqref{argminh}, and thus Lemma \ref{argmin} provides ${\rm Argmin}(H)={\rm Argmin}(G)={\rm Argmin}(F)=\{0\}$, by assumption.

Moreover, by construction,
\[
H(t)\leq \widetilde{F}(\sqrt{|t|})\leq F(\sqrt{|t|}),
\]
so that \eqref{subq} implies 
\begin{equation}
\label{fi0}
\lim_{t\to 0}\frac{H(t)}{t}=0.
\end{equation}
In particular, $\varphi$ is continuous.
We then observe that, for $t\neq 0$,
\[
\varphi(t)=\frac{H(t)-H(0)}{t-0},
\]
and the convexity of $H$ implies that $\varphi$ is non-decreasing.  
To prove strict monotonicity, suppose, by contradiction, that $\varphi(t_1)=\varphi(t_2)$ for some $t_1<t_2$. Since $\varphi(t) t>0$ for $t\neq 0$, we can assume, without loss of generality, that $t_1>0$. Since $\varphi$ is non-decreasing, we infer $\varphi(t)=\varphi(t_1)=:\lambda>0$, for all $t\in [t_1, t_2]$, i.e. $H(t)=\lambda t$, for $t\in [t_1, t_2]$. Therefore, $\lambda t$ is a support line for $H$ and, by convexity, we obtain $H(t)\geq \lambda t$, for all $t>0$. This in turn implies that $H(t)\geq \lambda|t|$, being $H$ even, and we reach a contradiction, through \eqref{fi0}.
\end{proof}

\begin{theorem}
Suppose that  \eqref{argmin}, \eqref{supq}, \eqref{subq} hold and $ q T^2> \pi^2 $.  Then, the minimum $ u_{q, T} $ of $ J_q $ on $ H_{T} $ exists, is nontrivial and satisfies
\begin{equation}
 \label{claim}
 {\rm Osc}(u_{q, T})^2\leq qT^2\varphi^{-1}\biggl(\frac{q^2}{2}\biggr),
 \end{equation}
 where $\varphi$ is the function given in Lemma \ref{phi}.
 \end{theorem}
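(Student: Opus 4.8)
The plan is to obtain existence and nontriviality exactly as in Theorem \ref{perex}, and then to devote the bulk of the argument to the oscillation bound \eqref{claim}, which is where the convex envelope $H$ and the function $\varphi$ from Lemma \ref{phi} enter. For existence I would invoke Remark \ref{remT}: since $F''(0)=0$ and $qT^2>\pi^2$, one has $\inf_{H_T}J_q<0$, while Lemma \ref{boundb} (applicable for $q$ in the relevant range, i.e.\ $q^2<2\liminf_{|t|\to\infty}F(t)/t^2$, which is precisely the regime $q\downarrow 0$ we care about) guarantees boundedness from below; the direct method together with Lemma \ref{lsc} then produces a minimizer $u:=u_{q,T}$ with $J_q(u)<0$, hence nontrivial, since every constant $c$ has $J_q(c)=TF(c)\ge 0$ by \eqref{min}.

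The first analytic step is to extract two inequalities from $J_q(u)\le 0$. Dropping the nonnegative term $\tfrac12\|u''\|_2^2$ gives $\int_0^T F(u)\,dx\le \tfrac q2\|u'\|_2^2$. Separately, keeping $F\ge 0$ yields $\|u''\|_2^2\le q\|u'\|_2^2$, and feeding this into the integration-by-parts inequality \eqref{osc}, namely $\|u'\|_2^2\le \|u\|_2\|u''\|_2$ (here I use $u'(0)=u'(T)=0$), produces $\|u'\|_2^2\le q\|u\|_2^2$ exactly as in Lemma \ref{boundb}. Chaining the two gives $\int_0^T F(u)\,dx\le \tfrac{q^2}{2}\|u\|_2^2$.

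The core is then a Jensen argument tuned to the definition \eqref{defH}. Since $H\le \widetilde F(\sqrt{|\cdot|})$ by construction and $\widetilde F\le F$, one has the pointwise bound $F(u(x))\ge H(u(x)^2)$; integrating and applying Jensen's inequality to the convex function $H$ gives $\int_0^T F(u)\,dx\ge T\,H(\bar s)$, where $\bar s:=\tfrac1T\|u\|_2^2>0$ (positive because $u$ is nontrivial). Combining with the previous estimate yields $H(\bar s)\le \tfrac{q^2}{2}\bar s$, i.e.\ $\varphi(\bar s)\le q^2/2$, and the strict monotonicity of $\varphi$ from Lemma \ref{phi} lets me invert: $\bar s\le \varphi^{-1}(q^2/2)$.

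Finally I would close with the elementary oscillation estimate ${\rm Osc}(u)^2\le T\|u'\|_2^2$ (write ${\rm Osc}(u)$ as the integral of $u'$ between the extremal points and apply Cauchy--Schwarz), and chain everything: ${\rm Osc}(u)^2\le T\|u'\|_2^2\le qT\|u\|_2^2=qT^2\bar s\le qT^2\varphi^{-1}(q^2/2)$, which is \eqref{claim}. The step I expect to be most delicate is the convex-envelope Jensen passage: one must use that $H$ is built as the envelope of $\widetilde F(\sqrt{|\cdot|})$ precisely so that Jensen applies to the quantity $u^2$ rather than to $u$, and one must check that every inequality points consistently so that the final chain runs from ${\rm Osc}(u)^2$ down to $\varphi^{-1}(q^2/2)$ without any reversal.
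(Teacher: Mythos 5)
Your proposal is correct and follows essentially the same route as the paper: both extract $\int_0^T F(u)\,dx\le \frac q2\|u'\|_2^2$ and $\|u''\|_2^2\le q\|u'\|_2^2$ from $J_q(u)\le 0$, combine with \eqref{osc}, apply Jensen's inequality to the convex envelope $H$, and invert $\varphi$ via Lemma \ref{phi}. The only (immaterial) difference is that you invert $\varphi$ at $\bar s=\frac1T\|u\|_2^2$ and then chain through $\|u'\|_2^2\le q\|u\|_2^2$, whereas the paper inverts at $z=\frac{1}{qT}\|u'\|_2^2$ to get \eqref{u'2} directly; both yield \eqref{claim} with the same constant.
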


\begin{proof}
Clearly, \eqref{argmin} implies, eventually adding a constant, that \eqref{min} holds. Thus, Theorem \ref{perex} and Remark \ref{remT} show the existence part of the statement.
We let, for simplicity, $ u:= u_{q, T} $. From $\inf_{H_T} J_q\leq 0$, we deduce
\begin{equation}
\label{<0}
\int_0^T|u''|^2\, dx+2\int_0^TF(u)\, dx\leq q\int_0^T|u'|^2\, dx.
\end{equation}

Let $H$ be given by \eqref{defH}. By the previous lemma, we infer in particular that $\lim_{t\to +\infty}H(t)=+\infty$, so that $H$ is invertible on $[0, +\infty[$. By Jensen's inequality we have
\[
 H \biggl(\fint_0^T u^2 dx \biggr) \le \fint_0^T H(u^2) dx \le \fint_0^T F(u) dx, 
 \]
hence
\[
 \int_0^T u^2 dx \le T H^{-1} \biggl(\fint_0^T F(u) dx \biggr).
\]
Therefore, from \eqref{osc} and \eqref{<0} we have
\[
\int_0^T |u'|^2 dx  \le T^{1/2} \biggl[H^{-1} \biggl(\fint_0^T F(u) dx \biggr) \biggr]^{1/2} \biggl(q \int_0^T |u'|^2 dx \biggr)^{1/2}, 
\]
and simplifying we get
\[
 \int_0^T |u'|^2 dx \le q T H^{-1} \biggl(\fint_0^T F(u) dx \biggr). 
 \]
 Since $H^{-1}$ is increasing on $[0, +\infty[$, using again \eqref{<0} we have
\[
 \frac{1}{q T} \int_0^T |u'|^2 dx \le H^{-1} \biggl(\frac{q}{2} \fint_0^T |u'|^2 dx \biggr). 
 \]
 Letting $ z=z(q, T):= \frac{1}{q T} \int_0^T |u'|^2 dx $, and $\varphi(z)=H(z)/z$, the last inequality reads
\[
\varphi(z) \le \frac{q^2}{2}.
\] 
By Lemma \ref{phi}, $\varphi$ is invertible on $\{|\varphi|\leq q^2/2\}$, for sufficiently small $q$, so that
\begin{equation}
\label{u'2}
 \int_0^T |u'|^2 dx\leq qT\varphi^{-1}\biggl(\frac{q^2}{2}\biggr)
 \end{equation}
 and, using the standard inequality
 \[
 {\rm Osc}(u)^2\leq T\int_0^T|u'|^2\, dx,
 \]
 we obtain \eqref{claim}.

\end{proof}

\begin{corollary}
Suppose $F$ satisfies  \eqref{argmin}, \eqref{supq} and \eqref{subq}. Then, for any $q>0$, there exists $T(q)>0$ such that the minimum $u_{q, T(q)}$ of $J_q$ on $H_T$ exists, is nontrivial and satisfies
\[
\lim_{q\to 0}\|u_{q, T(q)}\|_\infty=0.
\]
\end{corollary}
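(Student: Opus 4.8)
The plan is to apply the preceding theorem with a carefully chosen period $T=T(q)$ and then to upgrade the oscillation estimate \eqref{claim} to a genuine $L^\infty$ bound. Concretely, I would fix
\[
T(q):=\frac{\sqrt{2}\,\pi}{\sqrt{q}},
\]
so that $q\,T(q)^2=2\pi^2>\pi^2$ and the theorem applies: the minimum $u:=u_{q,T(q)}$ of $J_q$ on $H_{T(q)}$ exists, is nontrivial, and by \eqref{claim} satisfies
\[
{\rm Osc}(u)^2\le q\,T(q)^2\,\varphi^{-1}\Bigl(\frac{q^2}{2}\Bigr)=2\pi^2\,\varphi^{-1}\Bigl(\frac{q^2}{2}\Bigr).
\]
By Lemma \ref{phi} the function $\varphi$ is continuous and strictly increasing with $\varphi(0)=0$, so $\varphi^{-1}$ is continuous at $0$ with $\varphi^{-1}(0)=0$; hence ${\rm Osc}(u)\to 0$ as $q\downarrow 0$. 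The point of this particular choice of $T(q)$ is that the prefactor $q\,T(q)^2$ stays bounded, which is what makes the right-hand side vanish even though $T(q)\to+\infty$.

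Controlling the oscillation alone is not sufficient, since it is insensitive to vertical translations of $u$; the second ingredient is to pin down the location of $u$ via the smallness of the average of $F(u)$. Starting from \eqref{<0}, which gives $\int_0^{T(q)}F(u)\,dx\le \frac{q}{2}\int_0^{T(q)}|u'|^2\,dx$, and inserting the bound \eqref{u'2} on $\int_0^{T(q)}|u'|^2\,dx$, I obtain after dividing by $T(q)$ that
\[
\fint_0^{T(q)}F(u)\,dx\le \frac{q^2}{2}\,\varphi^{-1}\Bigl(\frac{q^2}{2}\Bigr),
\]
whose right-hand side again tends to $0$ as $q\downarrow 0$. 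Choosing a point $x_0\in[0,T(q)]$ at which $x\mapsto F(u(x))$ attains its minimum on the interval then yields $F(u(x_0))\le \fint_0^{T(q)}F(u)\,dx$, which tends to $0$.

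The last ingredient is an elementary property of $F$: under ${\rm Argmin}(F)=\{0\}$ and \eqref{supq}, the sublevel sets of $F$ shrink to $\{0\}$, i.e. for every $\eta>0$ there exists $\delta>0$ such that $F(t)\le\delta$ forces $|t|\le\eta$. I would prove this by contradiction: a sequence $t_n$ with $F(t_n)\to 0$ and $|t_n|\ge\eta$ is bounded, by the coercivity of $F$ forced by \eqref{supq}, hence a subsequence converges to some $t_*$ with $|t_*|\ge\eta$ and, by continuity, $F(t_*)=0$, contradicting ${\rm Argmin}(F)=\{0\}$. Combining the three ingredients, fix $\eta>0$ and take $\delta>0$ as above; for all sufficiently small $q$ one has both $\fint_0^{T(q)}F(u)\,dx<\delta$ (so that $|u(x_0)|\le\eta$) and ${\rm Osc}(u)\le\eta$, whence
\[
\|u\|_\infty=\max_{[0,T(q)]}|u|\le |u(x_0)|+{\rm Osc}(u)\le 2\eta.
\]
As $\eta>0$ is arbitrary, this gives $\|u_{q,T(q)}\|_\infty\to 0$, as claimed.

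I expect the main obstacle to be exactly the gap between the oscillation bound supplied by the theorem and the sup-norm control demanded by the statement. Bridging it forces one to combine the averaged energy bound (to locate $u$ near a minimum of $F$) with the shrinking of the sublevel sets of $F$, while simultaneously keeping $q\,T(q)^2$ bounded so that both quantities vanish in the limit despite the period $T(q)$ diverging.
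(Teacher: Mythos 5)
Your proof is correct, and its skeleton matches the paper's: choose $T(q)$ so that $qT(q)^2$ stays in a bounded window above $\pi^2$ (the paper allows any $\pi^2<qT^2(q)\le K$, you fix $qT(q)^2=2\pi^2$), invoke the preceding theorem so that \eqref{claim} gives ${\rm Osc}(u_{q,T(q)})\to 0$, and then use the smallness of the $F$-integral coming from \eqref{<0} and \eqref{u'2} to pin down the vertical position of $u$. The pinning step is where you genuinely diverge. The paper argues by contradiction: if $u_{q_n}(0)\ge\eps>0$ along $q_n\downarrow 0$, the oscillation control forces $u_n>\eps/2$ everywhere, so $\int_0^{T_n}F(u_n)\,dx$ is bounded below by a positive constant times $T_n\ge \pi/\sqrt{q_n}\to+\infty$, contradicting $\int_0^{T_n}F(u_n)\,dx\le \tfrac12 q_n^2T_n^2\varphi^{-1}(q_n^2/2)\to 0$; this crucially plays the divergence of $T_n$ against the vanishing of the total integral. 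You instead pass to the average, $\fint_0^{T(q)}F(u)\,dx\le \tfrac{q^2}{2}\varphi^{-1}(q^2/2)\to 0$, select $x_0$ minimizing $x\mapsto F(u(x))$, and conclude $|u(x_0)|\le\eta$ from your lemma that the sublevel sets of $F$ shrink to $\{0\}$ --- which is exactly where \eqref{argmin} and the coercivity from \eqref{supq} enter, just packaged as a separate compactness statement rather than inside a contradiction. Your variant buys two small improvements: it never uses $T(q)\to+\infty$ (the average bound holds for any admissible $T$, so the argument is robust to other normalizations of $T(q)$), and it treats both signs of $u$ symmetrically via $|u(x_0)|$, whereas the paper's contradiction is stated for $u_n(0)\ge\eps$ and implicitly needs the quantitative bound $\inf_{t\ge\eps/2}F(t)>0$ (its displayed lower bound $\tfrac{\eps}{2}T_n$ is slightly imprecise on this point; your sublevel-set lemma makes the needed fact explicit). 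Two details you use implicitly, as does the paper: $q^2/2$ lies in the range of $\varphi$ for small $q$ since $\varphi$ is continuous, strictly increasing and vanishes at $0$; and the normalization $0=F(0)=\min_\R F$, legitimate under \eqref{argmin} after a vertical translation, is what makes $F\ge 0$ and $F(t_*)=0\Rightarrow t_*=0$ in your lemma.
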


\begin{proof}
For any $q>0$ we choose $T(q)$ such that 
\[
T(q)>\frac{\pi}{\sqrt{q}}, \qquad \lim_{q\to 0^+}qT^2(q)\varphi^{-1}\biggl(\frac{q^2}{2}\biggl)= 0
\]
(e.g., $\pi^2<qT^2(q)\leq K$, for some $K>\pi^2$, suffices). The previous Theorem provides the nontrivial solution $u_{q, T(q)}$ satisfying ${\rm Osc}(u_{q, T(q)})\to 0$ and, to complete the proof, it suffices to show that $u_{q, T(q)}(0)\to 0$. Suppose by contradiction that $u_{q_n, T(q_n)}(0)\geq \eps>0$, for some $q_n\to 0^+$, and let $u_n=u_{q_n, T(q_n)}$, $T_n= T(q_n) $. By \eqref{<0} and \eqref{u'2} it holds
\[
\int_0^{T_n}F(u_n)\, dx\leq \frac{q_n}{2}\int_0^{T_n}|u_n'|^2\, dx\leq \frac{1}{2}q_n^2T^2_n\varphi^{-1}\biggr(\frac{q_n^2}{2}\biggl)\to 0.
\]
Since  ${\rm Osc}(u_{n})\to 0$, for sufficiently large $n$ it holds $u_n(x)\geq u_n(0)-{\rm Osc}(u_n)>\eps/2$ for any $x$, which implies
\[
 \int_0^{T_n}F(u_n)\, dx\geq \frac{\eps}{2}T_n>\frac{\pi\eps}{2\sqrt{q_n}}\to +\infty,
 \]
 which contradicts the previous displayed estimate.

\end{proof}

\subsection{The case of homogeneous potentials}

Suppose now that $ F $ is homogeneous, e.g. $ F(u)= \frac{|u|^r}{r} $. We thus  focus on the model equation 
\begin{equation}
\label{u4}
u''''+ q u''+ |u|^{r-1}u= 0.
\end{equation}

\begin{theorem}

Let $r>2$. If $ \{u_n\} $ is a sequence of bounded solutions to \eqref{u4}, for some $q_n\downarrow 0$, then
\[
\lim_n\|u_n\|_\infty = 0.
\]

\end{theorem}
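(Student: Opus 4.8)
The plan is to use a scaling (blow-up / renormalization) argument combined with a Liouville-type rigidity result. The homogeneity of the nonlinearity $|u|^{r-1}u$ is exactly what makes the equation scale-invariant, so if the conclusion failed we could zoom in on a sequence whose sup-norms stay bounded away from zero and, after rescaling, extract a nontrivial bounded entire solution of the limiting equation, which we then contradict.

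\begin{proof}[Proof strategy]
I argue by contradiction. Suppose $\|u_n\|_\infty \not\to 0$; passing to a subsequence we may assume $M_n := \|u_n\|_\infty \ge c > 0$ for all $n$. I would first \emph{rescale to normalize the amplitude}. Because $F(u)=|u|^r/r$ is homogeneous of degree $r$, if $u$ solves $u''''+qu''+|u|^{r-1}u=0$ then for any $\lambda>0$ the function $v(x):=\lambda^{-1}u(\mu x)$ solves an equation of the same type with the second-order coefficient and the zeroth-order coefficient rescaled by powers of $\mu$ and $\lambda$. Choosing $\lambda = M_n$ (so that $\|v_n\|_\infty = 1$) and choosing $\mu = \mu_n$ so as to keep the nonlinear term normalized, namely $\mu_n^4 = \lambda^{r-1} = M_n^{r-1}$, one computes that $v_n$ solves
\[
v_n'''' + \tilde q_n v_n'' + |v_n|^{r-1}v_n = 0, \qquad \tilde q_n := q_n \mu_n^{2} = q_n M_n^{\frac{r-1}{2}}.
\]
The crucial observation is that $\tilde q_n \to 0$: since each $u_n$ is \emph{bounded}, Theorem~\ref{th1} (the non-existence result for $q\le 0$, applicable since $F(t)=|t|^r/r$ satisfies \eqref{QC} and the superlinear condition \eqref{liminf0}) forces the relevant coefficient to be genuinely positive but, more importantly, boundedness of $u_n$ together with the a priori estimates should pin down the size of $M_n$. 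The main point is that $q_n\downarrow 0$ drives $\tilde q_n\to 0$, provided one shows $M_n$ cannot blow up too fast; this is where a uniform a priori bound on $\|u_n\|_\infty$ (independent of $n$) is needed, and I expect it to follow from an energy/Hamiltonian argument using that $E_{u_n}$ is constant and that bounded solutions have controlled Hamiltonian energy.

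Next I would pass to the limit. The normalized sequence $\{v_n\}$ satisfies $\|v_n\|_\infty=1$, and by the equation together with elliptic (here, ODE) regularity and interpolation, all derivatives $v_n', v_n'', v_n''', v_n''''$ are locally uniformly bounded; by Ascoli--Arzel\`a and a diagonal argument, a subsequence converges in $C^3_{\mathrm{loc}}(\R)$ to an entire function $v_\infty$ solving the limiting equation
\[
v_\infty'''' + |v_\infty|^{r-1}v_\infty = 0 \qquad \text{on } \R,
\]
with $\|v_\infty\|_\infty \le 1$ and, if the sup-norm is attained along a suitably translated sequence (centering each $v_n$ at a near-maximum point), $\|v_\infty\|_\infty = 1$, so $v_\infty$ is \emph{bounded and nontrivial}. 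This is exactly the $q=0$, EFK case: $v_\infty$ is a bounded global solution of \eqref{SH} with $q=0$ and potential $F(t)=|t|^r/r$. Since this $F$ satisfies \eqref{QC} and the superlinearity \eqref{liminf0} (as $r>2$), Theorem~\ref{th1} applies and yields that $v_\infty$ must be constant; a bounded constant solution of $v''''+|v|^{r-1}v=0$ forces $|v_\infty|^{r-1}v_\infty\equiv 0$, i.e. $v_\infty\equiv 0$, contradicting $\|v_\infty\|_\infty=1$.

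The main obstacle, and the step requiring the most care, is establishing that $\tilde q_n\to 0$ together with the compactness needed to obtain a genuinely nontrivial limit. Compactness requires centering: I would translate $u_n$ so that the (near-)maximum of $|u_n|$ occurs at the origin, ensuring the limit retains sup-norm $1$ rather than escaping to infinity or vanishing. The delicate quantitative point is controlling the growth rate of $M_n=\|u_n\|_\infty$: one must rule out the scenario where $M_n\to\infty$ so fast that $\tilde q_n = q_n M_n^{(r-1)/2}$ does \emph{not} tend to zero. Here I expect to use that each $u_n$ is bounded in combination with Theorem~\ref{th1}, which already rules out nonconstant bounded solutions for $q\le 0$ and suggests, by continuity/openness considerations, that bounded solutions for small $q>0$ must have small amplitude — precisely the quantitative statement we want. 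Making this rigorous likely uses the constancy of the Hamiltonian energy $E_{u_n}$ in \eqref{he} to relate $\|u_n\|_\infty$ to $q_n$, closing the argument.
\end{proof}
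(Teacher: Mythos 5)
Your overall architecture --- rescale to normalize the amplitude, argue that the effective second-order coefficient tends to zero, extract a $C^3_{\mathrm{loc}}$ limit via interpolation plus Ascoli--Arzel\`a, and kill the limit with a Liouville theorem at $q=0$ --- is exactly the paper's strategy; the paper's only structural difference is that it normalizes by ${\rm Osc}(u_n)$ rather than by $M_n=\|u_n\|_\infty$. However, your scaling algebra contains a sign error in the exponent, and this error manufactures the very obstacle you then fail to close. If $v(x)=\lambda^{-1}u(\mu x)$, then
\[
v''''+q\mu^2 v''+\mu^4\lambda^{r-1}|v|^{r-1}v=0,
\]
so keeping the nonlinear coefficient equal to $1$ with $\lambda=M_n$ forces $\mu_n^4=M_n^{1-r}$, \emph{not} $M_n^{r-1}$, hence $\tilde q_n=q_n\mu_n^2=q_nM_n^{-(r-1)/2}$. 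Under your contradiction hypothesis $M_n\ge c>0$ this gives $\tilde q_n\le c^{-(r-1)/2}q_n\to 0$ automatically: no upper bound on $M_n$ is needed, and large $M_n$ only helps. With your incorrect exponent $\tilde q_n=q_nM_n^{(r-1)/2}$, the ``delicate quantitative point'' of controlling the growth of $M_n$ appears, and your proposed fix --- an a priori bound on $\|u_n\|_\infty$ from constancy of the Hamiltonian energy \eqref{he} --- is unsupported: nothing in the hypotheses controls $E_{u_n}$ uniformly in $n$ (it can be assigned arbitrarily through initial data), and a uniform bound on $\|u_n\|_\infty$ is essentially a weak form of the statement being proven, so as written this step is both unproven and close to circular. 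That is the genuine gap; fortunately it evaporates entirely once the exponent is corrected.

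With the corrected scaling, your route is actually somewhat more direct than the paper's, so the comparison is worth recording. The paper normalizes by the oscillation, and its dichotomy runs through the claim $\liminf_n q_n/{\rm Osc}(u_n)^{\gamma}>0$; this costs three extra steps that your sup-normalization avoids: the concavity trick (a bounded function with $(w_n''+\tilde q_n w_n)''<0$ everywhere cannot exist) used once to show $\|w_n\|_\infty\le 1$ after oscillation-normalization, a common-interval argument on the ranges $w_n(\R)$ to center the sequence at a common nonzero value, and the concavity trick a second time at the end to upgrade ${\rm Osc}(u_n)\to 0$ to $\|u_n\|_\infty\to 0$. Your version, centering at near-maximum points where $|v_n|\ge 1/2$ (correctly allowing for the supremum not being attained), reaches $\|u_n\|_\infty\to 0$ in a single pass, since the limit $v_\infty$ is bounded, nontrivial, and solves $v''''+|v|^{r-1}v=0$, so that \cite[Theorem 3.1]{ms} (or Theorem \ref{th1}, since $F(t)=|t|^{r+1}/(r+1)$ satisfies \eqref{QC} and \eqref{liminf0}) forces $v_\infty$ to be a constant, necessarily zero --- a contradiction. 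What the paper's normalization buys in exchange is quantitative information your pure contradiction argument does not produce: the intermediate claim yields the explicit decay rate ${\rm Osc}(u_n)\le Cq_n^{1/\gamma}$. Your remaining ingredients are sound and match the paper's: the uniform $C^4$ bound via $\|w''\|_\infty\le C(\|w\|_\infty\|w''''\|_\infty)^{1/2}$ and Young's inequality is exactly \eqref{c4}.
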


\begin{proof}

Suppose $u$ is any bounded nontrivial solution to \eqref{u4} and let $ v_{\lambda}(x):= u(\lambda x) $, for all $ \lambda> 0 $. Observe that $ \text{Osc}(v_{\lambda})= \text{Osc}(u) $ and that $ v_{\lambda} $ solves
\[
v_{\lambda}''''+ q \lambda^2 v_{\lambda}''+ \lambda^4 |v_\lambda|^{r-2}v_\lambda= 0.
\]
Furthermore, if
\[
w_{\lambda}(x):= \frac{v_{\lambda}(x)}{\text{Osc}(v_{\lambda})}= \frac{v_{\lambda}(x)}{\text{Osc}(u)},
\]
$ w_{\lambda} $ solves
\[
w_{\lambda}''''+ q \lambda^2 w_{\lambda}''+ \lambda^4 {\rm Osc}(u)^{r-2}|w_\lambda|^{r-2}w_\lambda= 0,
\]
for all $ \lambda> 0 $. Choosing $ \lambda^4= \text{Osc}(u)^{2-r} $, we obtain
\[
w''''+ \frac{q}{\text{Osc}(u)^{\gamma}} w''+ |w|^{r-2}w= 0.
\]
Applying this scaling argument to $u=u_n$, $q=q_n$ and letting $\rO_n={\rm Osc}(u_n)$, $w_n=u_n/\rO_n$, $\gamma=\frac r 2 -1>0$, we get
\begin{equation}
\label{w}
w_n''''+ \frac{q_n}{\rO_n^{\gamma}} w_n''+ |w_n|^{r-2}w_n= 0.
\end{equation}
We claim that 
\begin{equation}
\label{claim3}
\liminf_n\frac{q_n}{\rO_n^{\gamma}}\ge \beta>0.
\end{equation} Arguing by contradiction, suppose that, up to a not relabeled subsequence, 
\begin{equation}
\label{contrclaim3}
\lim_n\frac{q_n}{\rO_n^{\gamma}}= 0. 
\end{equation}
In particular, we can suppose, without loss of generality, that $ q_n/\rO_n^{\gamma}\leq 1 $. From \eqref{w} we have
\[
|w_n''''| \le |w_n''|+ |w_n|^{r-1}
\]
which, using  the $L^\infty$-interpolation inequality $\|u''\|_\infty\le C\sqrt{\|u\|_\infty\|u''''\|_\infty}$ and Young's inequa\-lity, implies
\begin{equation}
\label{c4}
 \|w_n\|_{C^4(\R)} \le C\|w_n\|_{\infty}.
\end{equation}

We now want to prove that $ \|w_n\|_{\infty} \le 1$.
  On the contrary, suppose that $ \|w_n\|_{\infty}> 1 $.  Then, without loss of generality, we can suppose that $ \sup w_n> 1 $ and, since $ \text{Osc}(w_n)= 1 $, we have $w_n(x)> 0$, for all $x\in \R$. This implies, through \eqref{w}, that 
\begin{equation}
\label{pp}
 \big(w_n''+ \frac{q_n}{\rO_n^\gamma} w_n\big)''=-|w_n|^{r-2}w_n< 0\quad \text{anywhere in $\R$},
 \end{equation}
  hence the function $ w_n''+ \frac{q_n}{\rO_n^{\gamma}} w_n $ is concave. Since it is also bounded, it must be constant, contradicting the strict inequality in \eqref{pp}.
  
Let $I_n$ be the interval $w_n(\R)$. From 
\[
|I_n|=1 ,\qquad I_n\subseteq [-1, 1],
\]
a standard compactness argument shows that, (up to a not relabeled subsequence), there exists an interval $J$ of length $1/2$ such that $ J\subseteq {\rm int}(I_n)$, for all $n$. Let $\lambda\in J\setminus \{0\}$. Being $\lambda\in w_n(\R)$, let $x_n$ be such that $w_n(x_n)=\lambda$, and let $v_n(x):=w_n(x+x_n)$. Clearly, $v_n$ solves \eqref{w} and satisfies ${\rm Osc }(v_n)\equiv 1$. Moreover, \eqref{c4} and $\|v_n\|_\infty=\|w_n\|_\infty\le 1$ show,  by  Ascoli-Arzel\`a's  Theorem,  that $\{v_n\}_n$ is a compact sequence in $ C_{\text{loc}}^3(\R) $, which we can suppose converges to some $v_0\in C^3_{\text{loc}}(\R)$.
Passing to the limit in the weak formulation of (\ref{w}) and using \eqref{contrclaim3}, we obtain 
\[
v_0''''+|v_0|^{r-2}v_0=0
\]
weakly, and thus strongly. Now, \cite[Theorem 3.1]{ms}  implies $v_0\equiv 0$, contradicting
\[
v_0(0)=\lim_nv_n(0)=\lim_nw_n(x_n)=\lambda\neq 0.
\]
Thus, \eqref{claim3} is proved, implying that, for any sufficiently large $n$, it holds 
\[
{\rm Osc}(u_n)^{\gamma}\leq \frac{2}{\beta}q_n,
\]
which proves that 
\begin{equation}
\label{O0}
{\rm Osc}(u_n)\to 0.
\end{equation}
It remains to prove that $u_n(0)\to 0$. The argument is the same as before and we only sketch it. If $u_n(0)\ge \eps>0$, then, by \eqref{O0}, for sufficiently large $n$ it holds $u_n(x)\ge \frac{\eps}{2}>0$, for all $x\in \R$, which implies
\[
\bigl(u_n''+q_n u_n\bigr)''=-|u|^{r-2}u<0 \quad \text{everywhere}.
\] 
Being $u_n''+q_nu_n$ bounded, it must be constant, contradicting the previous strict inequality. 
\end{proof}

\end{document}